
\documentclass[10pt]{article}
\RequirePackage{amsthm,amsmath,amsfonts,amssymb}
\RequirePackage[numbers]{natbib}
\RequirePackage[colorlinks,citecolor=blue,urlcolor=blue]{hyperref}



\usepackage{graphicx}

\usepackage[mathscr]{eucal}
\usepackage{enumerate}

\usepackage{cases}

\usepackage{diagbox}

\numberwithin{equation}{section}
\theoremstyle{plain}
\newtheorem{thm}{Theorem}[section]

\newtheorem{lem}{Lemma}[section]
\newtheorem{prop}{Proposition}[section]
\newtheorem{rem}{Remark}[section]



\setlength\oddsidemargin {-10pt}\setlength\evensidemargin{-30pt}
\setlength{\textwidth}{170mm}\setlength{\textheight}{210mm}

\usepackage{color}
\newcommand{\abs}[1]{\left| #1\right|}

\newcommand{\E}{\mathbb{E}}
\newcommand{\PP}{\mathbb{P}}
\newcommand{\R}{\mathbb{R}}

\begin{document}


\title{Hellinger and total variation distance in approximating L\'evy driven SDEs}

\author{
 Emmanuelle Cl\'ement
 \thanks{LAMA, Univ Gustave Eiffel, Univ Paris Est Creteil, CNRS, F-77447 Marne-la-Vall\'ee, France. }}


\date{ Revision : 02/03/22}
\maketitle

\begin{quote}
\noindent
\textbf{Abstract.} In this paper, we get some convergence rates in total variation distance in approximating discretized paths of L\'evy driven stochastic differential equations, assuming that the driving process is locally stable. The particular case of the  Euler approximation is studied. Our results are based on sharp local estimates in Hellinger distance obtained using Malliavin calculus for jump processes. 

\noindent
\textbf{MSC $2020$}.  60H10,  60G51, 60B10, 60H07.

\noindent
\textbf{Key words}: total variation, Hellinger Distance, L\'evy Process, Stable Process, Stochastic Differential Equation.
\end{quote}

\section{Introduction}

On a complete probability space $( \Omega, \mathcal{F}, \mathbb{P})$, we consider the process $(X_t)_{t \in [0,1]}$ solution of the stochastic equation
\begin{equation} \label{E:EDS}
X_t=x_0+ \int_0^t b(X_s) ds+ \int_0^t a(X_{s-}) d L_s, 
\end{equation}
where $L$ is a pure jump locally stable L\'evy process. Pure jump driven stochastic equations  are widely used to model dynamic phenomena appearing in many fields such as insurance and finance and approximation of such processes attracts many challenging problems. A large part of the literature is devoted to the study of weak convergence at terminal date  $\mathbb{E}g(X_T)- \mathbb{E}g(\overline{X}_T)$ (we assume in this paper that $T=1$), where $\overline{X}$ is a numerical scheme. Let us mention some results obtained in approximating L\'evy driven stochastic equations by the simplest and widely used Euler scheme. The weak order $1$ for equations with smooth coefficients and for smooth functions $g$ is obtained in Protter and Talay \cite{PT97} and some extensions to H\"{o}lder coefficients are studied in  Mikulevi\v{c}ius and Zhang \cite{Miku11} and Mikulevi\v{c}ius \cite{Miku12}. Expansions of the density are considered in Konakov and Menozzi \cite{Konakov11}. Turning to pathwise approximation, convergence rates in law for the error process are obtained by Jacod \cite{Jac2004} and
some strong convergence results have been established in Mikulevi\v{c}ius and Xu \cite{Miku18}. To overcome the difficulties  related to the simulation of the small jumps of $L$, more sophisticated schemes have been considered. We quote among others the works of Rubenthaler \cite{Rub03} and Kohatsu-Higa and  Tankov \cite{Tankov10}.

In this paper, we consider a different control of the accuracy of approximation and we focus on high-frequency pathwise approximation of \eqref{E:EDS} in total variation distance. We mention that this result has also additional interesting consequence in parametric statistics and permits to derive asymptotic properties such as efficiency for the statistical experiment based on high-frequency observation of the stochastic equation by using the numerical scheme for which the log-likelihood function is explicitly connected to the distribution of the driving L\'evy process. 

 We now precise the schemes considered in the present work. To deal with small values of the Blumenthal-Getoor index of $L$ (characterizing the jump activity), we not only consider the Euler approximation of \eqref{E:EDS} but also a  scheme with better drift approximation.
Introducing  the time discretization $(t_i)_{0 \leq i \leq n}$ with $t_i=i/n$, we approximate the process $(X_t)_{t \in [0,1]}$ by $(\overline{X}_t)_{t \in [0,1]}$ defined by $\overline{X}_0=x_0$ and for $t \in [t_{i-1},t_i]$, $1 \leq i \leq n$
\begin{equation} \label{E:Euler}
\overline{X}_t= \xi_{t-t_{i-1}}(\overline{X}_{t_{i-1}}) + a(\overline{X}_{t_{i-1}}) (L_t-L_{t_{i-1}}), 
\end{equation}
where $(\xi_t(x))_{t \geq 0}$ solves the ordinary equation 
\begin{equation} \label{E:EDO}
\xi_t(x)=x+ \int_0^t b(\xi_s(x)) ds. 
\end{equation}
Approximating $\xi$ by
\begin{equation*} \label{E:EulerEDO}
\tilde{\xi}_t(x)=x+b(x) t,
\end{equation*}
we obtain the Euler approximation $(\tilde{X}_t)_{t \in [0,1]}$ defined by $\tilde{X}_0=x_0$ and for $t \in [t_{i-1},t_i]$, $1 \leq i \leq n$
\begin{equation} \label{E:TrueEuler}
\tilde{X}_t= \tilde{X}_{t_{i-1}}+b( \tilde{X}_{t_{i-1}}) (t-t_{i-1}) + a(\tilde{X}_{t_{i-1}}) (L_t-L_{t_{i-1}}).
\end{equation}
Our aim is to study the rate of convergence of $(\overline{X}_{t_i})_{0 \leq i \leq n}$ or $(\tilde{X}_{t_i})_{0 \leq i \leq n}$ to $(X_{t_i})_{0 \leq i \leq n}$ in total variation distance. Let us present briefly our results. For the scheme \eqref{E:Euler}, we obtain some rates of convergence, depending on the jump activity index $\alpha \in (0,2)$. Essentially the rate of convergence is of order $1/ n^{1/ \alpha-1/2}$ if $\alpha >1$ and $1/n^{1/2 - \varepsilon}$ if $\alpha \leq 1$. If the scale coefficient $a$ is constant, we obtain in some cases  the rate $1/\sqrt{n}$ for any value of $\alpha$. For the Euler scheme, the results are similar if $\alpha \geq 1$ but are working less well if $\alpha<1$, and we have no rate at all if $\alpha \leq 2/3$. Intuitively, on a time step, the drift term has order $1/n$ and the stochastic jump part has order $1/n^{1/ \alpha}$, consequently if the jump activity is small the main part of the stochastic equation is the drift and an approximation of  \eqref{E:EDO} with higher order than the Euler one is required. 

To get these results, our methodology consists in estimating the local Hellinger distance at time $1/n$ and to conclude by tensorisation. Using Malliavin calculus for jump processes, we can bound the Hellinger distance by the $L^2$-norm of a Malliavin weight. The difficult part is next to identify a sharp rate of convergence for this weight. This is done by remarking some judicious compensations between the rescaled jumps.

The paper is organized as follows. Section \ref{S:Not} introduces the notation and some preliminary results. Bounds for the local Hellinger distance are given in Section \ref{S:RateH}. The main results are presented in Section \ref{S:TV}. They concern the pathwise approximation in total variation distance and also include one step approximation results in small time. The optimality of the local bounds is discussed on some specific examples.    Section \ref{S:Malliavin} contains the technical part of the paper involving Malliavin calculus and the proof of the local estimates of  Section \ref{S:RateH}.

\section{Preliminary results and notation} \label{S:Not}

We first recall some properties of total variation and Hellinger distance (see Strasser \cite{Strasser}). Let $P$ and $Q$ be two probability measures on $(\Omega, \mathcal{A})$ dominated by $\nu$, 
 the total variation distance between $P$ and $Q$  on $(\Omega, \mathcal{A})$ is defined by
\begin{equation*} 
d_{TV}(P,Q)= \sup_{A \in \mathcal{A}} | P(A)-Q(A)|= \frac{1}{2} \int \left| \frac{dP}{d \nu}- \frac{d Q}{ d \nu} \right| d \nu .
\end{equation*}
The total variation distance can be estimated by using the Hellinger distance $H(P,Q)$ defined by
\begin{equation} \label{E:DHell}
H^2(P,Q)=  \int \left( \sqrt{ \frac{dP}{d \nu}}- \sqrt{\frac{d Q}{ d \nu}}\right)^2 d \nu =2\left(1- \int \sqrt{ \frac{dP}{d \nu}} \sqrt{\frac{d Q}{ d \nu}} d \nu\right)
\end{equation}
and we have
$$
\frac{1}{2}H^2(P,Q) \leq d_{TV}(P,Q) \leq H(P,Q).
$$
If $P$, respectively $Q$, is the distribution of a random variable $X$, respectively $Y$, we also use the notation $d_{TV}(X,Y)$ for $d_{TV}(P,Q)$ and $H(X,Y)$ for $H(P,Q)$. The Hellinger distance has interesting properties, in particular for product measures 
$$
H^2(\otimes_{i=1}^n P_i, \otimes_{i=1}^n Q_i)\leq \sum_{i=1}^n H^2(P_i,Q_i).
$$
We extend this property in the next proposition to the distribution of Markov chains. 

Let $(X_i)_{i \geq 0}$ and $(Y_i)_{i \geq 0}$ be two homogenous Markov chains on $\R$ with transition density $p$ and $q$ with respect to the Lebesgue measure. We define the conditional Hellinger distance between $X_1$ and $Y_1$ given $X_0=Y_0=x$ by
\begin{equation*}
H^2_x(p,q)= \int \left(\sqrt{p(x,y)}-\sqrt{q(x,y)} \right)^2 dy.
\end{equation*}
We denote by $P^n$, respectively $Q^n$, the distribution of $(X_i)_{1 \leq i \leq n}$ given $X_0=x_0$, respectively $(Y_i)_{1 \leq i \leq n}$ given $Y_0=x_0$ (the two Markov chains have the same initial value), then we can bound $H(P^n, Q^n)$ with $H_x(p,q)$.

\begin{prop}\label{P:Tenso}
With the previous notation, we have
$$
H^2(P^n, Q^n) \leq \frac{1}{2}\sum_{i=1}^n \left( \E H^2_{X_{i-1}}(p,q) +\E H^2_{Y_{i-1}}(p,q) \right) \leq n \sup_{x \in \R} H_x^2(p,q).
$$
\end{prop}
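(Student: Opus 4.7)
The plan is to reduce the statement to a one-step chain rule for the squared Hellinger distance, and then iterate on $n$. Consider generic joint probabilities $P,Q$ on a product space with densities $p_X(x)\,p_{Y|X}(y|x)$ and $q_X(x)\,q_{Y|X}(y|x)$ with respect to Lebesgue measure. Starting from the affinity form in \eqref{E:DHell}, namely $\rho(P,Q):= \int \sqrt{(dP/d\nu)(dQ/d\nu)}\, d\nu = 1 - \tfrac12 H^2(P,Q)$, a direct application of Fubini gives the factorization
\begin{equation*}
\rho(P,Q) \;=\; \int \sqrt{p_X(x)\,q_X(x)}\; \rho\!\left(P_{Y|X=x},\,Q_{Y|X=x}\right) dx,
\end{equation*}
which, after subtracting from $1$ on both sides and using $\rho(P_X,Q_X) = 1 - \tfrac12 H^2(P_X,Q_X)$, yields the chain identity
\begin{equation*}
H^2(P,Q) \;=\; H^2(P_X,Q_X) \;+\; \int \sqrt{p_X(x)\,q_X(x)}\; H^2\!\left(P_{Y|X=x},\,Q_{Y|X=x}\right) dx.
\end{equation*}

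Using the elementary AM--GM inequality $\sqrt{p_X q_X} \leq \tfrac12(p_X + q_X)$, I would upgrade this to the linearized bound
\begin{equation*}
H^2(P,Q) \;\leq\; H^2(P_X,Q_X) \;+\; \tfrac12\,\E_{P_X}\!\left[H^2(P_{Y|X},Q_{Y|X})\right] \;+\; \tfrac12\,\E_{Q_X}\!\left[H^2(P_{Y|X},Q_{Y|X})\right].
\end{equation*}

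I would then apply this one-step bound to $P^n$ and $Q^n$, taking $X=(X_1,\dots,X_{n-1})$ and $Y=X_n$. The Markov property identifies the $X$-marginals with $P^{n-1}$ and $Q^{n-1}$, while the conditional law of $X_n$ given the past reduces to the one-step transition $p(X_{n-1},\cdot)$; consequently $H^2(P_{Y|X=x},Q_{Y|X=x}) = H^2_{x_{n-1}}(p,q)$. The recursion therefore reads
\begin{equation*}
H^2(P^n,Q^n) \;\leq\; H^2(P^{n-1},Q^{n-1}) \;+\; \tfrac12\,\E\, H^2_{X_{n-1}}(p,q) \;+\; \tfrac12\,\E\, H^2_{Y_{n-1}}(p,q),
\end{equation*}
with base case $n=1$ an equality because $X_0=Y_0=x_0$ is deterministic. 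Telescoping delivers the first inequality of the proposition, and the second follows by majorizing each $\E H^2_{X_{i-1}}$ and $\E H^2_{Y_{i-1}}$ by $\sup_{x\in\R} H^2_x(p,q)$.

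No serious obstacle is expected: the only delicate point is the clean justification of the Hellinger chain rule via disintegration and Fubini, which works straightforwardly once one notes that the conditional densities are well defined and that all terms are nonnegative so positivity of the integrand allows free interchange of integrals.
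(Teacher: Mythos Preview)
Your proposal is correct and follows essentially the same route as the paper: both arguments integrate out the last coordinate using the affinity identity $\rho=1-\tfrac12 H^2$, apply $\sqrt{ab}\leq\tfrac12(a+b)$ to linearize the geometric mean of the marginal densities, and obtain the recursion $H^2(P^n,Q^n)\leq H^2(P^{n-1},Q^{n-1})+\tfrac12\bigl(\E H^2_{X_{n-1}}(p,q)+\E H^2_{Y_{n-1}}(p,q)\bigr)$ before iterating. The only cosmetic difference is that you phrase the one-step identity as a general Hellinger chain rule before specializing, whereas the paper works directly with the explicit product densities.
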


\begin{proof}
We have from \eqref{E:DHell}
\begin{eqnarray*}
H^2(P^n, Q^n)   =  2\left( 1- \int_{\R^n} \left( \prod_{i=1}^n p(x_{i-1}, x_{i}) \prod_{i=1}^n q(x_{i-1}, x_{i})\right)^{1/2} d x_1 \ldots d x_n \right). 
 \end{eqnarray*}
 But
 \begin{eqnarray*}
\int_{\R} \sqrt{p(x_{n-1}, x_{n})  q(x_{n-1}, x_{n})} dx_n = 1-\frac{1}{2} H^2_{x_{n-1}}(p,q),
 \end{eqnarray*}
 consequently
 \begin{eqnarray*}
 H^2(P^n, Q^n)   =   H^2(P^{n-1}, Q^{n-1})  \hspace{6cm}\\
 + \int_{\R^{n-1}} \left( \prod_{i=1}^{n -1}p(x_{i-1}, x_{i}) \prod_{i=1}^{n-1} q(x_{i-1}, x_{i})\right)^{1/2} H^2_{x_{n-1}}(p,q)d x_1 \ldots d x_{n-1}, 
 \end{eqnarray*}
 and from the inequality $\sqrt{ab} \leq \frac{1}{2}(a+b)$, this gives
 \begin{eqnarray*}
H^2(P^n, Q^n) &   \leq &
  H^2(P^{n-1}, Q^{n-1}) + \frac{1}{2} ( \E H^2_{X_{n-1}}(p,q) + \E H^2_{Y_{n-1}}(p,q)).
\end{eqnarray*}
We deduce then the first inequality in Proposition \ref{P:Tenso} by induction, the second inequality is immediate.
\end{proof}
The result of Proposition \ref{P:Tenso} motivates the study of the Hellinger distance between $X_{1/n}$ and $\overline{X}_{1/n}$ given $X_0=\overline{X}_0=x$ (respectively $\tilde{X}_{1/n}$) to bound $d_{TV} ((X_{i/n})_{  i }, (\overline{X}_{i/n})_{  i })$ (respectively $d_{TV} ((X_{i/n})_{  i }, (\tilde{X}_{i/n})_{  i })$). Before stating our main results, let us explain briefly our approach.

We will use the Malliavin calculus developed in  \cite{BGJ} and \cite{BJ83} and follow the methodology proposed in \cite{CG18} with some modifications. This requires some regularity assumptions on the coefficients $a$ and $b$. We assume that $a$ and $b$ are real functions satisfying the following regularity conditions. In the sequel, we use the notation $||f||_{\infty}= \sup_{x \in \R} |f(x)|$ for $f$ bounded.  We make the following assumptions.

{\bf HR} : the functions $a$ and $b$ are $\mathcal{C}^3$ with bounded derivatives and $a$ is lower bounded
$$
\forall x \in \R, \quad 0 < \underline{a} \leq a(x).
$$
The L\'evy process $L$  admits the decomposition 
$$
L_t= \int_0^t \int_{\R \setminus \{0\}} z 1_{\{ |z| \leq 1\} }\tilde{\mu}(ds,dz) + \int_0^t \int_{\R \setminus \{0\}} z 1_{\{ |z| >1\} }\mu(ds,dz),
$$
with $\tilde{\mu}=\mu- \overline{\mu}$, where $\mu$ is a Poisson random measure and $\overline{\mu}(dt,dz) =dt F(dz)$ its compensator. 
We assume that  $L$ satisfies assumption {\bf A} (i)  and either (ii) or (iii).

{\bf A} :  $(L_t)_{t \geq 0}$ is a L\'evy process with triplet $(0,0,F)$ with
$$
F(dz)= \frac{g(z)}{|z|^{\alpha+1} }1_{\mathbb{R}\setminus \{0\}}(z) dz, \quad \alpha \in(0,2).
$$

(i) We assume that $g : \mathbb{R} \mapsto \mathbb{R}$ is a continuous symmetric non negative bounded function with $g(0)=c_0>0$.

(ii) We assume that $g$ is differentiable on $\{|z| >0\}$ and   $g'/g$ is bounded on $\{|z| >0\}$. 

(iii) We assume that $g$ is  supported on $\{ |z| \leq \frac{1}{2 || a'||_{\infty}} \}$ and differentiable with $g'$ bounded on $\{0< |z| \leq \frac{1}{2 || a'||_{\infty}} \}$ and that
$$
\int_{\R} \left| \frac{g'(z)}{g(z)} \right|^p g(z) dz < \infty, \quad \forall p \geq 1.
$$
In the sequel we use the notation 

{\bf A0} : {\bf A} (i) and (ii),  

{\bf A1} : {\bf A} (i) and (iii). 

Let us make some comments on these assumptions. We remark that  {\bf A0} is satisfied by a large class of processes, in particular $\alpha$-stable processes ($g=c_0$) or 
 tempered stable processes ($g(z)=c_0e^{-\lambda \vert z \vert}$, $\lambda>0$). On the other hand, assumption {\bf A1} is very restrictive. Actually, the restriction on the support of $g$ implies the non-degeneracy  assumption (Assumption (SC) p.14 in \cite{BGJ}) that can be written in our framework
$$
\forall x,z, \quad | 1+ a'(x)z | \geq \xi >0. \quad (SC)
$$
 This condition permits to apply  Theorem \ref{T:Mal-Hell} in Section \ref{S:Malliavin} (integrability of the inverse of   $U_1^{K,n,r}$). Assumption {\bf A1} is required to deal with a non constant scale function $a$ ($||a'||_{\infty} >0$). Conversely, if $a$ is constant, then the non-degeneracy assumption (SC) is satisfied and we get our results assuming the weaker assumption {\bf A0}. We also observe that these assumptions can be relaxed and that only regularity of $g$ around zero is required to obtain a rate of convergence in total variation of $\overline{X}_{1/n}$ (or $\tilde{X}_{1/n}$) to $X_{1/n}$ (see Section \ref{S:local}).
  
Since Malliavin calculus requires integrability properties for the driving process $L$, to deal with assumption {\bf A0},  we introduce a truncation function  in order to suppress the jumps larger than a constant $K$ (the truncation is useless under {\bf A1}). In a second step we will make $K$ tend to infinity. 
So we consider the truncated L\'evy process $(L^{K}_t)_{t \geq 0}$with L\'evy measure $F^K$  defined by
$$
F^K(dz)= \tau_K(z) F(dz),
$$
where $F$ is the L\'evy measure of $L$ and $\tau_K$ is a smooth truncation function such that $\tau_K$ is supported on $\{ |x| \leq K\}$ and equal to $1$ on $\{ |x| \leq K/2\}$.

We associate to $L^K$ the truncated process that solves
\begin{equation} \label{E:KEDS}
X_t^{K}= x_0 + \int_0^t b(X_s^{K}) ds + \int_0^t a(X_{s-}^{K}) d L_s^{K}, \quad t \in [0,1],
\end{equation}
and its discretization defined by $\overline{X}^K_0=x_0$ and (with $\xi$ defined in \eqref{E:EDO})
\begin{equation} \label{E:KEuler}
\overline{X}^K_t= \xi_{t-t_{i-1}}(\overline{X}^K_{t_{i-1}}) + a(\overline{X}^K_{t_{i-1}}) (L^K_t-L^K_{t_{i-1}}), \quad  t \in [t_{i-1},t_i], \quad 1 \leq i \leq n.
\end{equation}
Thanks to the truncation $\tau_K$, $\E |L^K_t|^p< \infty$, for any $p \geq1$, we can apply the Malliavin calculus on Poisson space introduced in \cite{BGJ}. 

Now under {\bf HR} and {\bf A0} or {\bf A1}, the random variables $X^K_t$ and $\overline{X}^K_t$ admit a density for $t>0$ (see \cite{BJ83}). Note that under {\bf A1}, $X=X^K$ and $X=\overline{X}^K$ for $K$ large enough. Let $p_{1/n}^K$, respectively $\overline{p}^K_{1/n}$, be the transition density of the Markov chain $(X^K_{i/n})_{i \geq 0}$, respectively $(\overline{X}^K_{i/n})_{i \geq 0}$.
From Proposition \ref{P:Tenso}, we have 
\begin{equation} \label{E:TVK}
d_{TV} ((X^K_{\frac{i}{n}}), (\overline{X}^K_{\frac{i}{n}})) \leq \left( 
\frac{1}{2}\sum_{i=1}^n \left( \E H^2_{X^K_{\frac{i-1}{n}}}(p^K_{1/n},\overline{p}^K_{1/n}) +\E H^2_{\overline{X}^K_{\frac{i-1}{n}}}(p^K_{1/n},\overline{p}^K_{1/n}) \right) \right)^{1/2}.
\end{equation}
Consequently to bound the total variation distance between $(X^K_{\frac{i}{n}})_{0 \leq  i \leq n}$ and $(\overline{X}^K_{\frac{i}{n}})_{0 \leq  i \leq n}$ it is sufficient to control $H_x(p_{1/n}^K, \overline{p}^K_{1/n})$ in terms of $n$, $K$ and $x$. Bounds for $H_x(p_{1/n}^K, \overline{p}^K_{1/n})$ are presented in the next section. They are obtained by connecting $H_x(p_{1/n}^K, \overline{p}^K_{1/n})$ to the $L^2$-norm of a Malliavin weight. This technical part of the paper is postponed to Section \ref{S:Malliavin}.

Of course, the methodology is exactly the same if we replace the scheme $\overline{X}$ by the Euler scheme $\tilde{X}$. In that case we consider the truncated Euler scheme defined by $\tilde{X}^K_0=x_0$ and for $t \in [t_{i-1},t_i]$, $1 \leq i \leq n$,
\begin{equation} \label{E:KTEuler}
\tilde{X}^K_t= \tilde{X}^K_{t_{i-1}}+b(\tilde{X}^K_{t_{i-1}})(t-t_{i-1}) + a(\tilde{X}^K_{t_{i-1}}) (L^K_t-L^K_{t_{i-1}}).
\end{equation}
We denote by $\tilde{p}^K_{1/n}$ the transition density of the Markov chain $(\tilde{X}^K_{i/n})_{i \geq 0}$.

Throughout the paper, $C(a,b, \alpha)$ (or $C(a,b)$, $C(b)$, $C(\alpha)$) denotes a constant, whose value may change from line to line, independent of $n$, $K$  but depending on the functions $a$, $b$ and the index $\alpha$. We write simply $C$ if $C(a,b, \alpha)$ does not depend on $a$, $b$, $\alpha$. The constant may depend on other fixed  parameters such as the parameter $p$ in H\"{o}lder's inequality and we omit in general this dependence except if some optimal choices are required, such as $p=1+ \varepsilon$ for $\varepsilon$ arbitrarily  small, in that case we use the notation  $C_{\varepsilon}(a,b, \alpha)$. 

\section{Estimates for the local Hellinger distance} \label{S:RateH}

We state in this section our main results concerning the rate of convergence in approximating $X^K_{1/n}$ solution of \eqref{E:KEDS} starting from $x$, by $\overline{X}^K_{1/n}$ or $\tilde{X}^K_{1/n}$ that solve respectively \eqref{E:KEuler} or \eqref{E:KTEuler} with initial value $x$. In what follows, the constant $C(a,b, \alpha)$ does not depend on $x$.

Before stating our results, we precise the assumptions on the auxiliary truncation $\tau_K$. Let $\tau$ be a symmetric $\mathcal{C}^1$ function such that $0 \leq \tau(x) \leq 1$, $\tau(x)=1$ if $|x| \leq 1/2$ and $\tau(x)=0$ if $|x| \geq 1$. We assume moreover that 
\begin{equation} \label{E:tau}
\forall p \geq 1 , \quad \int \left|  \frac{\tau'(z)}{\tau(z)} \right|^p \tau(z) dz < \infty.
\end{equation}
For $K \geq \kappa_0 >0$, we define $\tau_K$ by $\tau_K(x)=\tau(x/K)$. 

We first assume  that $a$ is constant.
In that case, our methodology does not require additional non-degeneracy assumptions on the L\'evy measure and we assume {\bf A0}.
We present in the next theorem the bounds obtained for the schemes defined by 
 \eqref{E:KEuler} and \eqref{E:KTEuler}. In general, the bound depends on the truncation $K$ but if $g$ satisfies the additional integrability assumption 
$\int |z| g(z) dz < \infty$ then the bound is independent of $K$. We observe also that the bound is slightly better in the stable case.

\begin{thm} \label{T:BHellHR0}
We assume {\bf A0} and {\bf HR} with $a$ constant. 

(i) For the scheme  \eqref{E:KEuler}, for any $\alpha \in (0,2)$ we have
$$
\sup_x H^2_x(p_{1/n}^K, \overline{p}^K_{1/n}) \leq 
 \begin{cases}
\frac{C(a,b, \alpha)}{n^2} (1 + \frac{K^{2- \alpha}}{n} ), \\
 \frac{C(a,b, \alpha)}{n^2},  \mbox{ if  } \int |z| g(z) dz < \infty, \\
\frac{C(a,b, \alpha)}{n^2} (1 + \frac{K^{2- \alpha}}{n^3} ),  \mbox{ in the stable case } g=c_0.
\end{cases}
$$
(ii) For the Euler scheme \eqref{E:KTEuler}, we have for $\alpha >1/2$
$$
 H^2_x(p_{1/n}^K, \tilde{p}^K_{1/n}) \leq 
 \begin{cases}
 \frac{C(a,b, \alpha)}{n^2} (1 + \frac{K^{2- \alpha}}{n} + |b(x)|^2 \frac{n^{2/ \alpha}}{n^2}), \\
 \frac{C(a,b, \alpha)}{n^2}(1+ |b(x)|^2 \frac{n^{2/ \alpha}}{n^2}), \mbox{ if  } \int |z| g(z) dz < \infty.
\end{cases}
$$
In (i) and (ii), $C(a,b,\alpha)$ has exponential growth in $||b'||_{\infty}$ and polynomial growth in $||b''||_{\infty}$, $1/a$, $a$, $1/ \alpha$ and $1/(\alpha-2)$.
\end{thm}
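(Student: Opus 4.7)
The plan is to apply the Malliavin-calculus based representation provided by Theorem \ref{T:Mal-Hell} in Section \ref{S:Malliavin}, which controls $H^2_x(p^K_{1/n},\overline{p}^K_{1/n})$ by the squared $L^2$-norm of a Malliavin weight built out of the error $\Delta_{1/n}:=X^K_{1/n}-\overline{X}^K_{1/n}$, its Malliavin derivative, and the inverse of the Malliavin covariance $U_1^{K,n,r}$. When $a$ is constant, the non-degeneracy condition (SC) is trivially satisfied since $a'\equiv 0$, so the $L^p$-integrability of $1/U_1^{K,n,r}$ holds under assumption \textbf{A0} alone, which explains why the stronger assumption \textbf{A1} is not required here.

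First I would write the error explicitly. Since $a$ is constant, the martingale parts of $X^K$ and $\overline{X}^K$ cancel on each step and
\begin{equation*}
\Delta_{1/n}=\int_0^{1/n}\bigl[b(X^K_s)-b(\xi_s(x))\bigr]\,ds.
\end{equation*}
A first-order Taylor expansion gives $b(X^K_s)-b(\xi_s(x))=b'(\xi_s(x))\bigl(X^K_s-\xi_s(x)\bigr)+R_s$ with $|R_s|\leq\tfrac12\|b''\|_\infty\,(X^K_s-\xi_s(x))^2$, and a standard It\^o--Gronwall argument yields the expansion $X^K_s-\xi_s(x)=a L^K_s+\eta_s$ where $\eta_s$ is a higher-order remainder controlled by the moments of $L^K_s$. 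Inserting this decomposition into the Malliavin weight and using $\E|L^K_s|^p\le C(s+sK^{p-\alpha})$ for $p>\alpha$, the leading centered term $a L^K_s$ produces, after the outer $ds$-integration on $[0,1/n]$ and the squaring in the Hellinger bound, the base rate $C/n^2$. The large-jump truncation of $L^K$ generates the additive correction $K^{2-\alpha}/n$; if $\int|z|g(z)dz<\infty$ the truncation error vanishes at leading order and the $K$-dependence disappears. In the pure stable case $g\equiv c_0$, self-similarity $L_s\stackrel{d}{=}s^{1/\alpha}L_1$ provides one extra power of $s^{1/\alpha}$ at each occurrence of an $L$-moment, which sharpens the $K$-correction to $K^{2-\alpha}/n^3$.

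For the Euler scheme $\tilde X^K$, the same route is followed with $\xi_s(x)$ replaced by $x$ in the expansion. Writing $b(X^K_s)-b(x)=b'(x)(X^K_s-x)+R'_s$ and $X^K_s-x=b(x)s+aL^K_s+o(s)$, one picks up an additional deterministic contribution $b'(x)b(x)s$ which, after pairing with the Malliavin weight (whose relevant $L^2$-norm on scale $1/n$ behaves like $n^{1/\alpha}$), produces the term $|b(x)|^2 n^{2/\alpha}/n^2$. The restriction $\alpha>1/2$ is what guarantees that this pairing is finite after the outer $ds$-integration; below this threshold the Malliavin weight is too singular for the deterministic drift contribution to be controlled.

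The hard part is identifying the \emph{judicious compensations between the rescaled jumps} alluded to in the introduction: when the Malliavin weight is expanded against the stochastic term $b'(\xi_s(x))aL^K_s$ inside $\Delta_{1/n}$, one obtains iterated Poisson integrals whose naive $L^2$-estimates are worse than the claimed bound, and one must exploit the centered nature of $\tilde\mu$ together with a careful $L^p$ analysis of $U_1^{K,n,r}$ and its inverse to recover the optimal rate. The explicit polynomial dependence of $C(a,b,\alpha)$ on $1/\alpha$ and $1/(2-\alpha)$ traces back precisely to these delicate estimates of the jump-based Malliavin covariance, which form the technical core of Section \ref{S:Malliavin}.
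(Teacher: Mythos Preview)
Your proposal has the right high-level idea---invoke Theorem \ref{T:Mal-Hell} and bound the resulting Malliavin weight---but several of the key mechanisms you describe are not the ones that actually drive the proof, and one of them is incorrect.

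First, the Malliavin weight in Theorem \ref{T:Mal-Hell} is not ``built out of the error $\Delta_{1/n}=X^K_{1/n}-\overline{X}^K_{1/n}$''. It is built out of the interpolation $(Y^{K,n,r})_{r\in[0,1]}$ and its $r$-derivative $\partial_r Y_1^{K,n,r}$, together with $U_1^{K,n,r}=\Gamma(Y_1^{K,n,r},Y_1^{K,n,r})$, $\mathbb{L}_1^{K,n,r}=L(Y_1^{K,n,r})$, and the cross terms $W_1^{K,n,r,(2,1)}$, $W_1^{K,n,r,(3,1)}$; see \eqref{E:PMal}. The whole point of the case $a$ constant is that the linear SDEs \eqref{E:EDSU}--\eqref{E:EDSW31} for these five objects can be solved \emph{explicitly} (equations \eqref{E:derY0}--\eqref{W30}), because the stochastic-integral coefficients all carry a factor $a'$ and hence vanish. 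Your Taylor expansion of $b(X^K_s)-b(\xi_s(x))$ is a heuristic for why $\partial_r Y$ is small, but it does not produce the ratios $W_1^{(3,1)}/(U_1)^2$, $\mathbb{L}_1/U_1$, $W_1^{(2,1)}/U_1$ that have to be controlled, and these ratios are where the actual work lies: one must bound quantities like $\mu^{K,n}(|\rho'\rho|)/\mu^{K,n}(\rho)^2$ and $\mu^{K,n}(|\rho'+\rho F'_{K,n}/F_{K,n}|)/\mu^{K,n}(\rho)$ in $L^2$, splitting into small and large jumps.

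Second, your explanation of the stable-case improvement is wrong. The truncated process $L^{K}$ is \emph{not} self-similar, so the argument ``$L_s\stackrel{d}{=}s^{1/\alpha}L_1$ gives an extra power of $s^{1/\alpha}$'' does not apply. In the paper the improvement from $K^{2-\alpha}/n$ to $K^{2-\alpha}/n^3$ comes from a completely different source: the term $T_{3,2}$, which carries the factor $g'/g$ from $F'_{K,n}/F_{K,n}$, vanishes identically when $g\equiv c_0$. With $T_{3,2}$ gone, the worst surviving $K$-dependent contribution is $T_1/n$ rather than $T_{3,2}$, and $\E T_1^2\le C(\alpha)K^{2-\alpha}/n$ then enters with the extra $1/n^2$. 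Similarly, the integrability assumption $\int|z|g(z)\,dz<\infty$ removes the $K$-dependence not through a ``truncation error'' argument but because it makes $\E T_1^2$ and $\E T_{3,2}^2$ bounded independently of $K$ (see \eqref{E:T1add}).

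For part (ii) your account of the extra term is closer to the truth: the Euler interpolation $\tilde Y^{K,n,r}$ replaces $b(\xi_s^n(x))$ by $b(x)$, which adds $|b(x)|/n^2$ to the bound on $|\partial_r\tilde Y_1^{K,n,r}|$ (compare \eqref{E:B1Euler} with \eqref{E:B1}), and this propagates through the weight as $|b(x)|\,n^{1/\alpha}/n^2$ times quantities like $\mu^{K,n}(|\rho'\rho|)/\mu^{K,n}(\rho)^2$ that are $O(1)$ in $L^2$. But the paper does not isolate ``$\alpha>1/2$'' as an integrability threshold for the Malliavin weight; it is simply the condition under which the resulting bound $n^{2/\alpha}/n^4$ tends to zero.
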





In the general case ($a$ non constant), we need strong restrictions on the support of the L\'evy measure $F$ and assume 
 {\bf A1}. So we have $X^K=X$ and $\overline{X}^K= \overline{X}$ for $K$ large enough and we omit the dependence on $K$.

\begin{thm} \label{T:BHellG}
We assume {\bf A1} and {\bf HR} with $||a' ||_{\infty}>0$, then we have

(i)
$$
 H^2_x(p_{1/n}, \overline{p}_{1/n}) \leq 
 \begin{cases}
C(a,b, \alpha) (1+ |x|^2)\frac{1}{n^{2/ \alpha}} , \quad \mbox{if} \quad \alpha >1,\\
C_{\varepsilon}(a,b, \alpha) (1+ |x|^2)\frac{1}{n^{2- \varepsilon}}, \quad \mbox{if} \quad \alpha \leq 1, \; \forall \varepsilon >0,
 \end{cases}
$$
(ii) For the Euler scheme \eqref{E:TrueEuler}, we obtain for $\alpha >1/2$
$$
 H^2_x(p_{1/n}, \tilde{p}_{1/n}) \leq 
 \begin{cases}
C(a,b, \alpha) (1+ |x|^2)\frac{1}{n^{2/ \alpha}} , \quad \mbox{if} \quad \alpha >1,\\
C_{\varepsilon}(a,b, \alpha) (1+ |x|^2)\frac{1}{n^{2- \varepsilon}}, \quad \mbox{if} \quad \alpha =1, \; \forall \varepsilon >0, \\
C(a,b, \alpha) (1+ |x|^2)\frac{1}{n^{4-2/ \alpha}} , \quad \mbox{if} \quad 1/2<\alpha <1.
 \end{cases}
$$
In (i) and (ii), $C(a,b,\alpha)$ (or $C_{\varepsilon}(a,b,\alpha)$) has exponential growth in $||b'||_{\infty}$ and polynomial growth in $||b''||_{\infty}$, $||a'||_{\infty}$, $||a''||_{\infty}$, $1/||a'||_{\infty}$, $b(0)$, $a(0)$, $1/\underline{a}$, $1/ \alpha$ and $1/(\alpha-2)$.

\end{thm}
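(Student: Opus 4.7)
The plan is to apply the Malliavin-based upper bound on the local Hellinger distance that will be established in Theorem \ref{T:Mal-Hell} of Section \ref{S:Malliavin}. Under {\bf A1} the non-degeneracy condition (SC) holds, so the Malliavin covariance $U_1^{K,n,r}$ of a suitable interpolating process admits negative moments of every order, and integration by parts on Poisson space yields an inequality of the form
\[
H^2_x(p_{1/n},\overline{p}_{1/n})\leq \int_0^1 \E\bigl[|W_r|^2\bigr]\,dr,
\]
where $W_r$ is a Malliavin weight built from the derivative, with respect to an interpolation parameter $r\in[0,1]$ between $\overline{X}$ and $X$, of the rescaled dynamics multiplied by $(U_1^{K,n,r})^{-1}$. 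Throughout I would work on $[0,1/n]$ with the rescaled driving noise $n^{1/\alpha}L_{\cdot/n}$, which under {\bf A} is locally $\alpha$-stable with density $c_0$ near zero, so that typical jumps have size $n^{-1/\alpha}$ and the inverse Malliavin covariance scales like $n^{2/\alpha}$.

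For part (i), I expand
\[
X_{1/n}-\overline{X}_{1/n}=\int_0^{1/n}\bigl(b(X_s)-b(\xi_s(x))\bigr)\,ds+\int_0^{1/n}\bigl(a(X_{s-})-a(x)\bigr)\,dL_s
\]
by Taylor's formula. The stochastic part has leading term $a'(x)\int_0^{1/n}(X_{s-}-x)\,dL_s$, of order $n^{-2/\alpha}$ after iteration, while the drift part is of order $n^{-1-1/\alpha}$ since $X_s-\xi_s(x)\sim a(x)L_s$ on this interval. Inserting these expansions into $W_r$, multiplying by $(U_1^{K,n,r})^{-1}\sim n^{2/\alpha}$, and using the symmetry $g(-z)=g(z)$ from {\bf A}(i) to kill odd-in-jump contributions, I expect $\E|W_r|^2$ to be of order $n^{-2/\alpha}$ when $\alpha>1$ (the jump error dominates) and of order $n^{-2}$ when $\alpha\leq 1$ (the drift error dominates). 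The factor $1+|x|^2$ comes from moment bounds on $X$, $\overline{X}$ and their first Malliavin derivatives via Gr\"onwall under {\bf HR}; the case $\alpha\leq 1$ additionally requires a H\"older step with exponent $1+\varepsilon$ to absorb integrability losses in the rescaled stable increments, which accounts for the $n^{-2+\varepsilon}$ rate and the notation $C_\varepsilon$.

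The Euler scheme in part (ii) adds the extra drift discrepancy $\xi_{t}(x)-\tilde\xi_{t}(x)=\tfrac{1}{2}b'(x)b(x)t^2+O(t^3)$, of size $n^{-2}$ per step. For $\alpha>1$ this is dominated by the $n^{-2/\alpha}$ jump error and the rate $n^{-2/\alpha}$ is preserved; at $\alpha=1$ a logarithmic loss appears as $\varepsilon$. For $1/2<\alpha<1$, however, this $n^{-2}$ drift error becomes dominant in $W_r$: combined with the $n^{2/\alpha}$ scaling of the inverse Malliavin covariance, it produces $\E|W_r|^2\sim n^{-(4-2/\alpha)}$, strictly worse than $n^{-2/\alpha}$. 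The threshold $\alpha>1/2$ is precisely where $4-2/\alpha>0$, so that the bound still decays to $0$ after tensorisation via Proposition \ref{P:Tenso}.

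The main obstacle, emphasised already in the introduction, is to exhibit the exact compensations between the rescaled jumps, without which one would only get $|W_r|\lesssim n^{-1/\alpha}$, hence a square far larger than the target $n^{-2/\alpha}$. The sharp bound requires rewriting $W_r$, after integration by parts, as a sum of quadratic functionals of the rescaled jumps and using the symmetry of $g$ near zero to cancel the leading odd contributions, leaving a residual whose $L^2$-norm matches the announced rate. Tracking the dependence on $||a'||_{\infty}$, $\underline{a}$, $||b'||_{\infty}$, $||b''||_{\infty}$ and the parameters of {\bf A1} — in particular the $L^p$-integrability of $g'/g$ that controls the inverse of $1+a'(x)z$ — is the technical heart of the argument, postponed to Section \ref{S:Malliavin}.
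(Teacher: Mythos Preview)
Your overall strategy matches the paper: invoke Theorem \ref{T:Mal-Hell}, work with the rescaled interpolation $Y^{n,r}$, and bound $\E_x|\mathcal{H}_{Y_1^{n,r}}(\partial_r Y_1^{n,r})|^2$. You also correctly identify the orders of the drift and jump contributions to $\partial_r Y$, the origin of the $\varepsilon$-loss (a H\"older step with exponent close to $1$), and the extra $n^{-2}$ term coming from $\xi_t(x)-\tilde\xi_t(x)$ in the Euler case.

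What is missing is the actual mechanism of the ``compensations'', and the one you propose does not match the paper and, as stated, would not suffice. You attribute the gain to the symmetry of $g$ cancelling odd-in-jump contributions; in the paper the symmetry is used only incidentally (to rewrite large-jump integrals against $\mu^n$ rather than $\tilde\mu^n$). The real work is done on the explicit three-term structure \eqref{E:PMal} of the weight, after solving the linear SDEs for $\partial_r Y$, $U$, $\mathbb L$, $W^{(2,1)}$, $W^{(3,1)}$ via the process $Z^{n,r}$ of \eqref{E:Z}. Each term then contains iterated Poisson integrals of the type $\int_0^t M_{s-}\,dM_s$ or $\int_0^t Q_{s-}\,dM_s$ (with $M$ against $\tilde\mu^n$, $Q$ against $\mu^n$), and the paper systematically splits them into small-jump and large-jump parts: small-jump pieces are $P_t$ uniformly in $n$; large-jump pieces are controlled by $R_t=\int_0^t\!\int |z|\mathbf 1_{|z|>1}\mu^n(ds,dz)$; cross terms like $\int M_{s-}^{LJ}\,dM_s^{SJ}$ are reduced via It\^o's formula using $[M^{SJ},M^{LJ}]=0$ (Lemma \ref{L:Bounds}). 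The dichotomy $\alpha>1$ versus $\alpha\le1$ and the $\varepsilon$-loss then emerge in Lemma \ref{L:techni}, where functionals such as $R_1\int R_{s-}|z|\mathbf 1_{|z|>1}\mu^n/\mu^n(\rho)$ are bounded through the representation $\sum_{i\ne j}|Z_i||Z_j|$ and H\"older with $p$ close to $1$. Finally, note that your sentence ``one would only get $|W_r|\lesssim n^{-1/\alpha}$, hence a square far larger than the target $n^{-2/\alpha}$'' is inconsistent, since $(n^{-1/\alpha})^2=n^{-2/\alpha}$; the genuine danger without the SJ/LJ analysis is that terms like $W^{(2,1)}/U$ are only $O(1)$.
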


\begin{rem}
In the Brownian case ($\alpha=2$), we obtain the rate of convergence $1/n$ for the square of the Hellinger distance between $X_{1/n}$ and its Euler approximation $\tilde{X}_{1/n}$. This  sharp rate (see Remark \ref{R:brownien}) does not permit to obtain a path control of the total variation distance between the stochastic equation and  the Euler scheme. This is why we focus in this paper on pure jump processes. To obtain pathwise convergence in the Brownian case, one has to consider a discretization scheme with finer step as in Konakov and al. \cite{Woerner14}.
\end{rem}

The proof of these theorems is given in Sections \ref{S:ProofC} and \ref{S:ProofG}.  

\section{Total variation distance : rate of convergence and examples} \label{S:TV}

\subsection{Pathwise total variation} \label{S:TVpath}

The local behavior of the Hellinger distance established in Section \ref{S:RateH} permits to obtain some pathwise rates of convergence
 in  total variation.
As in the previous section, we distinguish between the cases $a$ constant (where the rate of convergence is better) or $a$ non constant and we study rate of convergence for  the total variation distance between $(X_{i/n})_{0 \leq  i \leq n}$ and $(\overline{X}_{i/n})_{0 \leq  i \leq n}$ (respectively $(\tilde{X}_{i/n})_{0 \leq  i \leq n}$) defined by \eqref{E:EDS} and \eqref{E:Euler} (respectively \eqref{E:TrueEuler}).

\begin{thm} \label{T:BTVHR0}
We assume {\bf A0} and {\bf HR} with $a$ constant.

(i) For the scheme \eqref{E:Euler}, we have 
\begin{equation*} \label{E:BTVHR0}
d_{TV} ((X_{\frac{i}{n}})_{0 \leq  i \leq n}, (\overline{X}_{\frac{i}{n}})_{0 \leq  i \leq n}) \leq 
\begin{cases}
C(a,b, \alpha)
\max( \frac{1}{\sqrt{n}}, \frac{1}{n^{2 \alpha/(\alpha+2)}}), \\
\frac{C(a,b, \alpha)}{ \sqrt{n}}, \mbox{ if } \int_{\R} |z| g(z) dz < \infty, \\
C(a,b, \alpha)
\max( \frac{1}{\sqrt{n}}, \frac{1}{n^{4 \alpha/(\alpha+2)}}), \mbox{ in the stable case } g=c_0,
\end{cases}
\end{equation*}
where $C(a,b,\alpha)$ has exponential growth in $||b'||_{\infty}$ and polynomial growth in $||b''||_{\infty}$, $1/a$, $a$, $1/ \alpha$ and $1/(\alpha-2)$.

(ii) For the Euler scheme \eqref{E:TrueEuler}, we have for $\alpha > 2/3$
\begin{equation*} \label{E:BTVHR0Euler}
d_{TV} ((X_{\frac{i}{n}})_{0 \leq  i \leq n}, (\tilde{X}_{\frac{i}{n}})_{0 \leq  i \leq n}) \leq C(a,b,\alpha)
\max( \frac{1}{\sqrt{n}}, \frac{1}{n^{\frac{3\alpha -2}{\alpha+2}}}).
\end{equation*}
Moreover with the additional assumption on $g$,
$
\int_{\R} |z| g(z) dz < \infty, 
$
then  
$$
d_{TV} ((X_{\frac{i}{n}})_{0 \leq  i \leq n}, (\tilde{X}_{\frac{i}{n}})_{0 \leq  i \leq n}) \leq 
\begin{cases}
C(a,b,\alpha) \frac{1}{\sqrt{n}}, \quad \mbox{if} \quad \alpha \geq 1, \\
C(a,b,\alpha)\frac{1}{n^{\frac{3}{2} -\frac{1}{\alpha} }}, \quad \mbox{if} \quad \frac{2}{3} < \alpha < 1.
\end{cases}
$$

\end{thm}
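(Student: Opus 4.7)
The plan is to combine the local Hellinger estimates of Theorem~\ref{T:BHellHR0} with the tensorization inequality of Proposition~\ref{P:Tenso}, after reducing to the truncated dynamics via a coupling argument. For any $K\geq \kappa_0$ the triangle inequality gives
$$
d_{TV}((X_{i/n})_i,(\overline{X}_{i/n})_i) \leq d_{TV}((X_{i/n})_i,(X^K_{i/n})_i) + d_{TV}((X^K_{i/n})_i,(\overline{X}^K_{i/n})_i) + d_{TV}((\overline{X}^K_{i/n})_i,(\overline{X}_{i/n})_i).
$$
Writing $L=L^K+R^K$ with $R^K$ an independent L\'evy process of L\'evy measure $(1-\tau_K)F$, the processes $X$ and $X^K$, and $\overline{X}$ and $\overline{X}^K$, coincide pathwise on $\{R^K\equiv 0\text{ on }[0,1]\}$; under {\bf A0} this event has complementary probability at most $(1-\tau_K)F(\R)\leq C(g,\alpha)/K^\alpha$, so the outer two terms are each controlled by $C/K^\alpha$.

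For the middle term I apply \eqref{E:TVK} together with the uniform local bound of Theorem~\ref{T:BHellHR0}~(i) to obtain $d_{TV}^2((X^K_{i/n})_i,(\overline{X}^K_{i/n})_i)\leq C/n+CK^{2-\alpha}/n^2$. Combining with the coupling bound yields
$$
d_{TV}((X_{i/n})_i,(\overline{X}_{i/n})_i) \leq \frac{C}{\sqrt{n}} + \frac{CK^{(2-\alpha)/2}}{n} + \frac{C}{K^\alpha},
$$
and the choice $K=n^{2/(\alpha+2)}$ balances the last two terms and produces the rate $1/n^{2\alpha/(\alpha+2)}$, yielding the first line of (i). The stable case $g=c_0$ uses instead the sharper local bound $H^2\leq C/n^2+CK^{2-\alpha}/n^5$ from Theorem~\ref{T:BHellHR0}~(i), which leads analogously to $K=n^{4/(\alpha+2)}$ and the rate $1/n^{4\alpha/(\alpha+2)}$. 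When $\int|z|g(z)\,dz<\infty$ the $K$-dependent part of the local Hellinger bound vanishes, and sending $K\to\infty$ removes the coupling error, leaving only $C/\sqrt{n}$.

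For part (ii) I repeat the argument with Theorem~\ref{T:BHellHR0}~(ii) in place of (i). The extra local term $|b(x)|^2n^{2/\alpha}/n^2$ produces, after tensorization and using $|b(x)|^2\leq C(1+|x|^2)$ (from {\bf HR}) together with the Gronwall-type moment bound $\E|X^K_t|^2\leq C(1+K^{2-\alpha})$, an additional contribution $C(1+K^{2-\alpha})/n^{3-2/\alpha}$ inside the squared total variation. For $\alpha\in(2/3,2)$ one has $3/2-1/\alpha<1$, so after taking square roots this new term dominates $CK^{(2-\alpha)/2}/n$, and combining with the coupling bound gives
$$
d_{TV} \leq \frac{C}{\sqrt{n}} + \frac{CK^{(2-\alpha)/2}}{n^{3/2-1/\alpha}} + \frac{C}{K^\alpha}.
$$
Balancing the last two terms yields $K=n^{(3-2/\alpha)/(\alpha+2)}$ and the rate $1/n^{(3\alpha-2)/(\alpha+2)}$. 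The sub-case $\int|z|g(z)\,dz<\infty$ is analogous: the $K$-dependent Hellinger contribution disappears and the moment of $X^K$ remains bounded uniformly in $K$, so letting $K\to\infty$ leaves $C/\sqrt{n}+C/n^{3/2-1/\alpha}$, which reduces to $C/\sqrt{n}$ for $\alpha\geq 1$ and to $C/n^{3/2-1/\alpha}$ for $2/3<\alpha<1$.

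The main obstacle is not any individual computation but the simultaneous tracking of three $K$-dependent error sources (the coupling penalty $1/K^\alpha$, the tensorized Hellinger remainder, and for the Euler scheme the $K^{2-\alpha}$ growth of $\E|X^K_t|^2$ entering through the drift term) and the subsequent optimal choice $K=K(n)$. It is precisely this moment blow-up that forces the slower exponent $(3\alpha-2)/(\alpha+2)$ for the Euler scheme, as compared with $2\alpha/(\alpha+2)$ for the scheme $\overline{X}$ using the exact ODE drift.
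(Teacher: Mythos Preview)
Your proof is correct and follows essentially the same approach as the paper: reduce to the truncated dynamics via a coupling of $L$ and $L^K$ with error $C/K^\alpha$, apply the tensorization inequality \eqref{E:TVK} with the local Hellinger bounds of Theorem~\ref{T:BHellHR0}, and optimize over $K$. The only cosmetic differences are that the paper obtains \eqref{E:TV} directly from the coupling (rather than via a triangle inequality) and, in the integrable case, selects a concrete $K=n^{1/(2\alpha)}$ instead of sending $K\to\infty$.
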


\begin{rem}
(i) We  observe that without integrability assumptions on $g$, the rate of convergence vanishes if $\alpha$ goes to zero. Moreover we have $\max( \frac{1}{\sqrt{n}}, \frac{1}{n^{2 \alpha/(\alpha+2)}})= \frac{1}{\sqrt{n}}$ if $\alpha\geq 2/3$.  In the stable case, the rate $ \frac{1}{\sqrt{n}}$ is obtained if $\alpha \geq 2/7$. 

(ii) For the Euler scheme, we have no rate at all if $\alpha \leq 2/3$.
\end{rem}

\begin{rem}
We can apply our methodology if the L\'evy process $L$ is a Brownian Motion. In that case the Malliavin calculus is more standard and we compute easily the Malliavin weight of Section \ref{S:Malliavin}. Assuming {\bf HR} and $a$ constant, we obtain the rate of convergence  $1/ \sqrt{n}$ in total variation distance between $(X_{\frac{i}{n}})_{0 \leq  i \leq n}$ and  $(\tilde{X}_{\frac{i}{n}})_{0 \leq  i \leq n}$.
\end{rem}

\begin{proof}[Proof of Theorem \ref{T:BTVHR0}]

{\bf (i)} We first establish a relationship between the total variation distance 
$d_{TV} ((X_{i/n})_{  i }, (\overline{X}_{i/n})_{  i })$ and 
$d_{TV} ((X^K_{i/n})_{  i }, (\overline{X}^K_{i/n})_{  i })$. On the same probability space $(\Omega, \mathcal{F}, (\mathcal{F}_t), \mathbb{P})$ we consider the L\'evy process $(L_t)_{t \geq 0}$ with L\'evy measure $F$ and the truncated L\'evy process $(L^{K}_t)_{t \geq 0}$ with L\'evy measure $F^K$  defined by
$$
F^K(dz)= \tau_K(z) F(dz).
$$
We recall (see Section 4.1  in \cite{CG18}) that this can be done  by setting
$L_t= \int_0^t \int_{\mathbb{R} }z 1_{\{|z| \leq 1 \}} \tilde{\mu}(ds,dz) + \int_0^t \int_{\mathbb{R} }z 1_{\{|z| > 1 \}} \mu(ds,dz) $, respectively $L_t^{K}= \int_0^t \int_{\mathbb{R}} z \tilde{\mu}^{K}(ds,dz)$, where $\tilde{\mu}$, respectively $\tilde{\mu}^{K}$, are the compensated Poisson random measures associated respectively to
$$
\mu(A) = \int_{[0,1]} \int_{\mathbb{R}} \int_{[0,1]} 1_A(t,z) \mu^*( dt, dz, du), \quad A \subset [0,1] \times \mathbb{R}
$$
$$
\mu^{K}(A) = \int_{[0,1]} \int_{\mathbb{R}} \int_{[0,1]} 1_A(t,z) 1_{ \{u \leq \tau_K(z) \}}\mu^*( dt, dz, du), \quad A \subset [0,1] \times \mathbb{R},
$$
for $\mu^*$ a Poisson random measure on $[0,1]\times \mathbb{R} \times [0,1]$ with compensator $\overline{\mu}^* (dt, dz, du) =dt F(dz) du$.
By construction,  the measures $\mu$  and $\mu^{K}$  coincide on the event
\begin{equation} \label{E:Omega_n}
\Omega_K= \{ \omega \in \Omega ;\mu^*(  [0,1] \times \{ z \in \mathbb{R} ; \abs{z} \geq K/2 \} \times [0,1] )=0 \}.
\end{equation}
Since $\mu^*(  [0,1] \times \{ z \in \mathbb{R} ; \abs{z} \geq K/2 \} \times [0,1] )$ has a Poisson distribution with parameter 
$$
\lambda_K=\int_{ \abs{z} \geq K/2} g(z)/\abs{z}^{\alpha+1} dz \leq C/(\alpha K^{\alpha}),
$$
we deduce that 
\begin{equation} \label{E:OmC}
\mathbb{P}( \Omega_K^c) \leq C(\alpha)/ K^{\alpha}.
\end{equation}
We observe that  $(X_t,\overline{X}_t, L_t)_{t \in [0,1]}= (X_t^{K}, \overline{X}^K_t,L_t^{K})_{t \in [0,1]}$ on $\Omega_K$ and so we deduce
\begin{equation}
d_{TV} ((X_{\frac{i}{n}})_{0 \leq  i \leq n}, (\overline{X}_{\frac{i}{n}})_{0 \leq  i \leq n}) \leq d_{TV} ((X^K_{\frac{i}{n}})_{0 \leq  i \leq n}, (\overline{X}^K_{\frac{i}{n}})_{0 \leq  i \leq n}) +C(\alpha)/ K^{\alpha}. \label{E:TV}
\end{equation}

\underline{General bound}. Combining \eqref{E:TV}, \eqref{E:TVK}  with Theorem \ref{T:BHellHR0} (i) we have
\begin{eqnarray*}
d_{TV} ((X_{\frac{i}{n}})_{0 \leq  i \leq n}, (\overline{X}_{\frac{i}{n}})_{0 \leq  i \leq n}) & \leq &\frac{C(a,b, \alpha)}{\sqrt{n}} (1+ \frac{K^{2- \alpha}}{n} )^{1/2} + \frac{C(\alpha)}{K^{\alpha}} \\
& \leq & C(a,b,\alpha) (\frac{1}{\sqrt{n}} + \frac{K^{1- \alpha/2}}{n} + \frac{1}{K^{\alpha}}).
\end{eqnarray*}
Choosing $K=n^{2/( \alpha+2)}$, we deduce 
$$
 \frac{K^{1- \alpha/2}}{n}=\frac{1}{n^{2 \alpha/(\alpha+2)}}= \frac{1}{K^{\alpha}},
$$
this gives the first part of the result.

\underline{With the integrability assumption on $g$}. We have
$$
d_{TV} ((X_{\frac{i}{n}})_{0 \leq  i \leq n}, (\overline{X}_{\frac{i}{n}})_{0 \leq  i \leq n})  \leq \frac{C(a,b,\alpha)}{\sqrt{n}} + \frac{C(\alpha)}{K^{\alpha}}, 
$$
and we conclude choosing $K=n^{1/(2\alpha)}$.

\underline{In the stable case}. We have
$$
d_{TV} ((X_{\frac{i}{n}})_{0 \leq  i \leq n}, (\overline{X}_{\frac{i}{n}})_{0 \leq  i \leq n})  \leq C(a,b,\alpha)( \frac{1}{\sqrt{n}} +\frac{K^{1- \alpha/2}}{n^2}+ \frac{C(\alpha)}{K^{\alpha}}).
$$
We conclude with $K=n^{4/( \alpha+2)}$.


{\bf (ii)} From \eqref{E:TVK} and Theorem \ref{T:BHellHR0} (ii) we have
\begin{eqnarray*}
d_{TV} ((X^K_{\frac{i}{n}})_{0 \leq  i \leq n}, (\tilde{X}^K_{\frac{i}{n}})_{0 \leq  i \leq n}) & \leq  & \frac{C(a,b,\alpha)}{\sqrt{n}}\left( 1+ \frac{K^{2-\alpha}}{n} \right. \\
 &  & \left. + [\sup_{t \in [0,1] }\E |X^K_t|^2 + \sup_{t \in [0,1] }\E |\tilde{X}^K_t|^2 ]\frac{n^{2/ \alpha}}{n^2} \right)^{1/2}.
\end{eqnarray*}
Standard computations give
$$
\sup_{t \in [0,1] } \E | X^K_t|^2 \leq C(a,b,\alpha) K^{2- \alpha}, \quad \quad \sup_{t \in [0,1] } \E | \tilde{X}^K_t|^2 \leq C(a,b,\alpha) K^{2- \alpha}.
$$
So we obtain
$$
d_{TV} ((X^K_{\frac{i}{n}})_{0 \leq  i \leq n}, (\tilde{X}^K_{\frac{i}{n}})_{0 \leq  i \leq n})  \leq \frac{C(a,b,\alpha)}{\sqrt{n}}(1+ K^{1- \alpha/2}\frac{n^{1/ \alpha}}{n}).
$$
Now proceeding as in the beginning of the proof of Theorem \ref{T:BTVHR0}, we see that \eqref{E:TV} holds, replacing $\overline{X}$ by $\tilde{X}$, and we deduce
$$
d_{TV} ((X_{\frac{i}{n}})_{0 \leq  i \leq n}, (\tilde{X}_{\frac{i}{n}})_{0 \leq  i \leq n})  \leq \frac{C(a,b,\alpha)}{\sqrt{n}}(1+ K^{1- \alpha/2}\frac{n^{1/ \alpha}}{n}) + \frac{C(\alpha)}{K^{\alpha}}.
$$
Choosing $K=n^{(3\alpha-2)/(\alpha (2 + \alpha))}$ gives the first result.

\underline{With the integrability assumption on $g$}. The $L^2$-norm of $(X^K_t)$ and $(\tilde{X}^K_t)$ does not depend on $K$ and  we have
$$
\sup_{t \in [0,1] } \E | X^K_t|^2 \leq C(a,b,\alpha) , \quad \quad \sup_{t \in [0,1] } \E | \tilde{X}^K_t|^2 \leq  C(a,b,\alpha).
$$
So it yields
$$
d_{TV} ((X_{\frac{i}{n}})_{0 \leq  i \leq n}, (\tilde{X}_{\frac{i}{n}})_{0 \leq  i \leq n})  \leq \frac{C(a,b,\alpha)}{\sqrt{n}}(1+ \frac{n^{1/ \alpha}}{n}) + \frac{C(\alpha)}{K^{\alpha}}.
$$
With $K=n^{1/(2 \alpha)}$ we deduce
$$
d_{TV} ((X_{\frac{i}{n}})_{0 \leq  i \leq n}, (\tilde{X}_{\frac{i}{n}})_{0 \leq  i \leq n})  \leq C(a,b, \alpha) \max( \frac{1}{\sqrt{n}}, \frac{1}{n^{(3 \alpha-2)/(2 \alpha)}}).
$$

\end{proof}





We now study  the convergence rate in total variation distance for a general scale coefficient $a$, assuming {\bf A1}.  We observe that in the Brownian case $\alpha=2$, we do not have convergence.

\begin{thm} \label{T:BTVHR}
We assume {\bf A1} and  {\bf HR} with  $||a' ||_{\infty} >0$.

(i) Then we have 
\begin{equation*} \label{E:BTVHR}
d_{TV} ((X_{\frac{i}{n}})_{0 \leq  i \leq n}, (\overline{X}_{\frac{i}{n}})_{0 \leq  i \leq n}) \leq 
\begin{cases}
C(a,b, \alpha) \frac{1}{n^{1/\alpha -1/2}}, \quad \mbox{if} \quad \alpha >1, \\ 
C_{\varepsilon}(a,b, \alpha) \frac{1}{n^{1/2 - \varepsilon}} \quad \mbox{if} \quad \alpha \leq 1, \; \forall \varepsilon >0. 
\end{cases}
\end{equation*}
where $C(a,b,\alpha)$ (or $C_{\varepsilon}(a,b, \alpha)$) has exponential growth in $||b'||_{\infty}$ and polynomial growth in $||b''||_{\infty}$, $||a'||_{\infty}$, $||a''||_{\infty}$, $1/||a'||_{\infty}$, $b(0)$, $a(0)$, $1/\underline{a}$, $1/ \alpha$ and $1/(\alpha-2)$.

(ii) For the Euler scheme \eqref{E:TrueEuler}, we obtain  if $\alpha > 2/3$
$$
d_{TV} ((X_{\frac{i}{n}})_{0 \leq  i \leq n}, (\tilde{X}_{\frac{i}{n}})_{0 \leq  i \leq n}) \leq 
\begin{cases}
C(a,b, \alpha) \frac{1}{n^{1/\alpha -1/2}}, \quad \mbox{if} \quad \alpha >1, \\ 
C_{\varepsilon}(a,b, \alpha) \frac{1}{n^{1/2 - \varepsilon}} \quad \mbox{if} \quad \alpha =1, \; \forall \varepsilon >0, \\
C(a,b, \alpha) \frac{1}{n^{3/2 - 1/ \alpha}} \quad \mbox{if} \quad 2/3 <\alpha <1.
\end{cases}
$$
\end{thm}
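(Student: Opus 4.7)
The plan is to combine Proposition~\ref{P:Tenso} (tensorization of the Hellinger distance along Markov chains) with the local estimates of Theorem~\ref{T:BHellG}. A pleasant feature of assumption {\bf A1} is that the L\'evy measure is supported in $\{|z| \le 1/(2\|a'\|_\infty)\}$, so for $K$ large enough one has $L=L^K$, hence $X=X^K$, $\overline{X}=\overline{X}^K$ and $\tilde{X}=\tilde{X}^K$. As a consequence the truncation argument used in the proof of Theorem~\ref{T:BTVHR0} --- which contributed the error $C(\alpha)/K^{\alpha}$ and forced an optimization in $K$ --- is no longer needed; we can work directly with the non-truncated chain and the estimate \eqref{E:TVK} specializes to
\[
d_{TV}\bigl((X_{\frac{i}{n}}),(\overline X_{\frac{i}{n}})\bigr)^2 \;\le\; H^2(P^n,Q^n)\;\le\;\frac12\sum_{i=1}^n \Bigl(\E H^2_{X_{\frac{i-1}{n}}}(p_{1/n},\overline p_{1/n})+\E H^2_{\overline X_{\frac{i-1}{n}}}(p_{1/n},\overline p_{1/n})\Bigr),
\]
and analogously for $\tilde{X}$.

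Next I would invoke Theorem~\ref{T:BHellG}(i), which states $H^2_x(p_{1/n},\overline p_{1/n})\le C(a,b,\alpha)(1+|x|^2)\eta_n^2$, where $\eta_n^2 = n^{-2/\alpha}$ when $\alpha>1$ and $\eta_n^2 = n^{-(2-\varepsilon)}$ when $\alpha\le 1$. To push this through the sum above, I need uniform moment control
\[
M\;:=\;\sup_{t\in[0,1]}\E|X_t|^2\;+\;\sup_{t\in[0,1]}\E|\overline X_t|^2\;<\;\infty,
\]
which follows from standard Burkholder--Davis--Gundy and Gronwall arguments applied to \eqref{E:EDS} and \eqref{E:Euler}; the assumption {\bf A1} ensures that $L$ has moments of every order because its jumps are bounded, so this step is routine. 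Plugging these moment estimates in gives
\[
H^2(P^n,Q^n)\;\le\;n\,C(a,b,\alpha)(1+M)\,\eta_n^2,
\]
and taking square roots yields the announced rates $n^{-(1/\alpha-1/2)}$ for $\alpha>1$ and $n^{-(1/2-\varepsilon/2)}$ (renaming $\varepsilon$) for $\alpha\le 1$ in part (i).

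For part (ii) the structure is exactly the same, with Theorem~\ref{T:BHellG}(ii) in place of (i): for $\alpha>1$ and for $\alpha=1$ the local rates coincide with those of (i) and yield the same pathwise bounds. For $1/2<\alpha<1$ the local Hellinger bound reads $C(a,b,\alpha)(1+|x|^2) n^{-(4-2/\alpha)}$, and after the same summation and moment control the tensorized estimate becomes $H^2 \le C\,n^{-(3-2/\alpha)}$, giving $d_{TV}\le C\,n^{-(3/2-1/\alpha)}$, which is a genuine rate precisely when $3/2>1/\alpha$, i.e.\ $\alpha>2/3$.

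All the substantial work is hidden inside Theorem~\ref{T:BHellG}: once the local Hellinger bound is granted, the present theorem is essentially a tensorization plus a standard moment estimate. The only minor subtlety is the renaming of $\varepsilon$ needed to pass from $n^{-(1/2-\varepsilon/2)}$ to $n^{-(1/2-\varepsilon)}$, and the verification that the constants $C(a,b,\alpha)$ and $C_\varepsilon(a,b,\alpha)$ inherit the polynomial/exponential dependence stated in the theorem from Theorem~\ref{T:BHellG} together with the (polynomial in $a,b$) moment bound $M$.
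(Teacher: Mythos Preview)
Your proposal is correct and follows exactly the same approach as the paper's own proof: under {\bf A1} the truncation is unnecessary, so one combines the tensorization bound \eqref{E:TVK} (i.e.\ Proposition~\ref{P:Tenso}) with the local Hellinger estimates of Theorem~\ref{T:BHellG} and the uniform moment bounds $\sup_{t\in[0,1]}\E|X_t|^p$, $\sup_{t\in[0,1]}\E|\overline X_t|^p$, $\sup_{t\in[0,1]}\E|\tilde X_t|^p\le C(a,b,\alpha)$. The paper's proof is in fact just these two lines; your write-up spells out the arithmetic of the rates, which the paper leaves implicit.
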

\begin{proof}
Under {\bf A1}, $g$ is a truncation function and the result is an immediate consequence of \eqref{E:TVK} and Theorem \ref{T:BHellG} observing that for any $p\geq 1$
$$
\sup_{t \in [0,1] } \E | X_t|^p \leq C(a,b,\alpha), \quad  \sup_{t \in [0,1] } \E | \overline{X}_t|^p \leq C(a,b,\alpha), \quad 
\sup_{t \in [0,1] } \E | \tilde{X}_t|^p \leq C(a,b,\alpha).
$$
\end{proof}

\begin{rem}
The result of Theorems \ref{T:BTVHR0} and \ref{T:BTVHR} has interesting consequences in statistics. Assume that $b$ and $a$ depend on  unknown parameters $\theta$ and $\sigma$ and that we are interested in estimating the three parameters $\beta=(\theta, \sigma, \alpha)$. Let $\mathcal{E}^{n}$ be the experiment based on the observations $(X^{\beta}_{\frac{i}{n}})_{0 \leq  i \leq n}$ given by \eqref{E:EDS}  and let $\overline{\mathcal{E}}^{n}$   be the experiment based on the observations $(\overline{X}^{\beta}_{\frac{i}{n}})_{0 \leq  i \leq n}$ given by \eqref{E:Euler}. With additional assumptions on the coefficients $a$ and $b$, we can prove that the total variation distance between the two experiments goes to zero, uniformly with respect to $\beta$, and consequently statistical inference in experiment $\mathcal{E}^n$ inherits the same asymptotic properties as in experiment $\overline{\mathcal{E}}^n$. Efficiency in $\mathcal{E}^n$ is still an open problem for a general scale coefficient $a$ (assuming $a$ constant, the LAMN property for $(\theta,a)$ has been established in \cite{CGN2} assuming additionally that $(L_t)$ is a truncated stable process). The main difficulty comes from the fact that the likelihood function is not explicit. But using the asymptotic equivalence of  $\mathcal{E}^n$ and  $\overline{\mathcal{E}}^n$, it is sufficient to study asymptotic efficiency in the simplest experiment $\overline{\mathcal{E}}^n$ where the likelihood function has an explicit expression in term of the density of the driving L\'evy process. 
\end{rem}

\subsection{Local total variation} \label{S:local}
The local estimates in Hellinger distance give bounds for the local total variation distance and permit to extend the results obtained in \cite{CG18} where the Euler scheme and the case $a$ constant were not considered (only (i) in Proposition \ref{P:BTVlocG} below is considered in \cite{CG18} and we slightly improve  here the bound for $\alpha >1$). So in this section we precise the
bounds for the total variation distance between $X_{1/n}$ and  $\overline{X}_{1/n}$, or $\tilde{X}_{1/n}$, that we deduce from the results of Section \ref{S:RateH}. Since we  consider approximation in small time, we do not need to make the truncation $K$ tend to infinity, consequently we can relax the assumptions on the L\'evy measure and only assume regularity around zero. We now assume that $L$ satisfies assumption {\bf AL} below.

{\bf AL} :    $(L_t)_{t \geq 0}$ is a L\'evy process with triplet $(0,0,F)$ with
$$
F(dz)= \frac{g(z)}{|z|^{\alpha+1} }1_{\{0< |z| < \eta \}} dz+ F_1(dz), \quad \alpha \in(0,2), \quad \eta >0,
$$
where $F_1$ is a symmetric finite measure supported on $\{ |z| \geq \eta\}$ and
$g$  a continuous symmetric non negative bounded function on $\{ |z| < \eta\}$, with $g(0)=c_0>0$. We also assume that $g$ is continuously differentiable on $\{0<|z| < \eta\}$ with   $g'/g$  bounded on $\{0<|z| < \eta\}$. 

We summarize our results in the next propositions.

\begin{prop} \label{P:BTVloc}
We assume {\bf AL} and {\bf HR} with $a$ constant. 

(i) For the scheme  \eqref{E:Euler}, for any $\alpha \in (0,2)$ we have
$$
\sup_x d_{TV}(X_{1/n}, \overline{X}_{1/n}) \leq 
\frac{C(a,b, \alpha)}{n}. 
$$
(ii) For the Euler scheme \eqref{E:TrueEuler}, we have for $\alpha >1/2$
$$
 d_{TV}(X_{1/n}, \tilde{X}_{1/n})\leq 
 \begin{cases}
 \frac{C(a,b, \alpha)}{n} (1 + |b(x)|), \mbox{ if  } \alpha >1, \\
 \frac{C(a,b, \alpha)}{n^{2-1/ \alpha}}(1+ |b(x)|), \mbox{ if  } 1/2<\alpha \leq 1.
\end{cases}
$$
\end{prop}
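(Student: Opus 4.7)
The strategy is to avoid tensorization (Proposition \ref{P:Tenso}) altogether and instead bound the local total variation by the local Hellinger distance via $d_{TV}\le H$, then appeal to Theorem \ref{T:BHellHR0} after a short truncation argument that reduces the weaker assumption \textbf{AL} to the stronger \textbf{A0}. Because we need only a one-step control over an interval of length $1/n$, this reduction is cheap: the difference between $L$ (under \textbf{AL}) and a truncated version $L^K$ whose L\'evy measure fits \textbf{A0} is carried by at most one ``big jump'' on $[0,1/n]$, an event of probability $O(1/n)$.

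\textbf{Truncation step.} Mimicking the coupling used in the proof of Theorem \ref{T:BTVHR0} (see \eqref{E:Omega_n}--\eqref{E:TV}), I would realise $L$ and a truncated L\'evy process $L^K$ on a common probability space, with $K$ fixed (independent of $n$) and chosen so that the truncated L\'evy measure
\begin{equation*}
F^K(dz)=\tau_K(z)\frac{g(z)}{|z|^{\alpha+1}}\mathbf{1}_{\{0<|z|<\eta\}}\, dz
\end{equation*}
inherits from \textbf{AL} the regularity required by \textbf{A0}. The residual intensity, which absorbs both the finite measure $F_1$ and the mass of $g(z)|z|^{-\alpha-1}$ on $\{K/2\le|z|<\eta\}$, is finite, so the event $\Omega_n=\{\mu^{\ast}([0,1/n]\times\{|z|\ge K/2\}\times[0,1])=0\}$ satisfies $\mathbb{P}(\Omega_n^c)\le C(a,b,\alpha)/n$. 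On $\Omega_n$, $X_{1/n}=X^K_{1/n}$ and $\overline{X}_{1/n}=\overline{X}^K_{1/n}$ (respectively $\tilde{X}_{1/n}=\tilde{X}^K_{1/n}$), whence
\begin{equation*}
d_{TV}(X_{1/n},\overline{X}_{1/n})\le H_x(p^K_{1/n},\overline{p}^K_{1/n})+C(a,b,\alpha)/n,
\end{equation*}
and analogously for $\tilde{X}$.

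\textbf{Conclusion from Theorem \ref{T:BHellHR0}.} Part (i) of that theorem with $K$ constant gives $H^2_x(p^K_{1/n},\overline{p}^K_{1/n})\le C(a,b,\alpha)/n^2$, hence $d_{TV}(X_{1/n},\overline{X}_{1/n})\le C(a,b,\alpha)/n$, proving (i). For (ii), part (ii) of the theorem with fixed $K$ yields
\begin{equation*}
H^2_x(p^K_{1/n},\tilde{p}^K_{1/n})\le \frac{C(a,b,\alpha)}{n^2}+\frac{C(a,b,\alpha)|b(x)|^2}{n^{4-2/\alpha}}.
\end{equation*}
If $\alpha>1$ then $4-2/\alpha>2$, so the Hellinger term is $O((1+|b(x)|)/n)$, which combined with the truncation error $C/n$ gives $d_{TV}\le C(a,b,\alpha)(1+|b(x)|)/n$. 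If $1/2<\alpha\le 1$ then $4-2/\alpha\le 2$ and the dominant Hellinger contribution becomes $|b(x)|/n^{2-1/\alpha}$; since $2-1/\alpha\le 1$, this rate is also slower than $1/n$, so it absorbs the truncation error and we obtain $d_{TV}\le C(a,b,\alpha)(1+|b(x)|)/n^{2-1/\alpha}$.

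\textbf{Main obstacle.} The delicate step is verifying that the truncated measure $F^K$ really satisfies \textbf{A0}: the ratio $(\tau_K g)'/(\tau_K g)=\tau_K'/\tau_K+g'/g$ is not pointwise bounded on the whole support of $\tau_K$, so either one absorbs $\tau_K$ into a genuine smooth replacement of $g$ on $\R$ with $g'/g$ bounded, or one checks that the Malliavin-weight computations of Section \ref{S:Malliavin} behind Theorem \ref{T:BHellHR0} only ever use the integrability property \eqref{E:tau} of $\tau$ together with the bound on $g'/g$ near zero. Once this reduction is in hand, the remainder of the proof is pure bookkeeping on the exponents already appearing in Theorem \ref{T:BHellHR0}.
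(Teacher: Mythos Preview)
Your proposal is correct and follows essentially the same route as the paper: fix a truncation level $K$ independent of $n$, couple $L$ and $L^K$ so that $(X_{1/n},\overline{X}_{1/n},\tilde{X}_{1/n})$ coincide with their truncated versions on an event of probability $1-C/n$, and then read off the Hellinger bounds from Theorem~\ref{T:BHellHR0} with $K$ fixed. The ``main obstacle'' you flag dissolves once you notice, as the paper does, that choosing $0<K<\eta$ places the support of $\tau_K$ entirely inside the region where \textbf{AL} already guarantees $g'/g$ is bounded; the truncated L\'evy measure is then literally of the form $\tau_K(z)g(z)|z|^{-\alpha-1}dz$ appearing in the setup of Theorem~\ref{T:BHellHR0}, so no separate verification of \textbf{A0} for the product $\tau_K g$ is needed---the Malliavin estimates in Section~\ref{S:Malliavin} treat the $g'/g$ and $\tau_K'/\tau_K$ contributions separately (the latter via~\eqref{E:tau}), exactly as you suspected.
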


\begin{prop} \label{P:BTVlocG}
We assume {\bf AL} and {\bf HR} with $||a' ||_{\infty}>0$.

(i) For the scheme  \eqref{E:Euler}, we have
$$
d_{TV}(X_{1/n}, \overline{X}_{1/n}) \leq 
 \begin{cases}
C(a,b, \alpha) (1+ |x|)\frac{1}{n^{1/ \alpha}} , \quad \mbox{if} \quad \alpha >1,\\
C_{\varepsilon}(a,b, \alpha) (1+ |x|)\frac{1}{n^{1- \varepsilon}}, \quad \mbox{if} \quad \alpha \leq 1, \; \forall \varepsilon >0.
 \end{cases}
$$
(ii) For the Euler scheme \eqref{E:TrueEuler}, we obtain for $\alpha >1/2$
$$
 d_{TV}(X_{1/n}, \tilde{X}_{1/n}) \leq 
 \begin{cases}
C(a,b, \alpha) (1+ |x|)\frac{1}{n^{1/ \alpha}} , \quad \mbox{if} \quad \alpha >1,\\
C_{\varepsilon}(a,b, \alpha) (1+ |x|)\frac{1}{n^{1- \varepsilon}}, \quad \mbox{if} \quad \alpha =1, \; \forall \varepsilon >0, \\
C(a,b, \alpha) (1+ |x|)\frac{1}{n^{2-1/ \alpha}} , \quad \mbox{if} \quad 1/2<\alpha <1.
 \end{cases}
$$
\end{prop}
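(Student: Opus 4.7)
The plan is to reduce Proposition \ref{P:BTVlocG} to Theorem \ref{T:BHellG} by truncating at a fixed (rather than $n$-dependent) level. Unlike the pathwise setting of Theorem \ref{T:BTVHR}, here we only need to control the discrepancy on a single interval of length $1/n$, so a coupling error of order $1/n$ is affordable and no optimization in $K$ is needed.

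First I would fix $K$ independent of $n$ with $K < \min(\eta, 1/(2||a'||_\infty))$ and introduce the truncated process $L^K$ of Section \ref{S:Not}, whose L\'evy measure $F^K(dz) = \tau_K(z) F(dz)$ reduces under {\bf AL} to $\tilde g(z)|z|^{-\alpha-1} dz$ with $\tilde g(z) = \tau_K(z) g(z) 1_{\{|z|<\eta\}}$, since the compound-Poisson tail $F_1$ is killed by $\tau_K$. This $\tilde g$ is symmetric, continuous, bounded, $\tilde g(0) = c_0 > 0$, supported in $\{|z| \leq K\} \subset \{|z| \leq 1/(2||a'||_\infty)\}$, and $\tilde g'/\tilde g = \tau_K'/\tau_K + g'/g$, so the $L^p(\tilde g)$ integrability demanded by {\bf A} (iii) follows from the boundedness of $g'/g$ (assumption {\bf AL}) together with (\ref{E:tau}) applied to $\tau_K$. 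Thus $L^K$ satisfies {\bf A1} and Theorem \ref{T:BHellG} yields the local Hellinger bounds for $(X^K_{1/n}, \overline X^K_{1/n})$ and $(X^K_{1/n}, \tilde X^K_{1/n})$ at the initial point $x$.

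Next I would couple $L$ and $L^K$ on a common probability space via the Poisson representation recalled in the proof of Theorem \ref{T:BTVHR0}, so that on the event
\[
\Omega_K = \{\mu^*([0, 1/n] \times \{|z| \geq K/2\} \times [0,1]) = 0\}
\]
the two processes coincide on $[0, 1/n]$ and in particular $X_{1/n} = X^K_{1/n}$, $\overline X_{1/n} = \overline X^K_{1/n}$, $\tilde X_{1/n} = \tilde X^K_{1/n}$. Because $F(\{|z| \geq K/2\})$ is finite under {\bf AL} (the tail $F_1$ is finite and the singular part is integrable away from $0$), one gets $\mathbb{P}(\Omega_K^c) \leq C(F,K,\alpha)/n$, and a triangle inequality in total variation produces
\[
d_{TV}(X_{1/n}, \overline X_{1/n}) \leq d_{TV}(X^K_{1/n}, \overline X^K_{1/n}) + 2 \mathbb{P}(\Omega_K^c),
\]
with the obvious analogue for $\tilde X$.

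Combining $d_{TV} \leq H$ with Theorem \ref{T:BHellG} then produces in each case one of the rates $(1+|x|) n^{-1/\alpha}$, $(1+|x|) n^{-(1-\varepsilon)}$, or $(1+|x|) n^{-(2-1/\alpha)}$. Since each of these exponents lies in $(0, 1]$ on the relevant range of $\alpha$, the $O(1/n)$ coupling error is absorbed by the dominant Hellinger term, yielding both (i) and (ii). The only delicate step is the verification that $F^K$ falls within {\bf A1} — specifically the $L^p(\tilde g)$ integrability of $\tilde g'/\tilde g$ — which, as noted, is immediate from (\ref{E:tau}) and the bound on $g'/g$ in {\bf AL}; after that the argument is a mechanical assembly of Theorem \ref{T:BHellG}, the contraction $d_{TV} \leq H$, and the small-jump coupling estimate.
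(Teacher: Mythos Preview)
Your proposal is correct and follows essentially the same route as the paper: fix $K<\min(\eta,1/(2\|a'\|_\infty))$, verify that the truncated L\'evy measure satisfies {\bf A1} (you give more detail here than the paper, which simply asserts it), couple $L$ and $L^K$ on $[0,1/n]$ to get a coupling error of order $1/n$, and then combine $d_{TV}\le H$ with Theorem~\ref{T:BHellG}. The paper's proof is the same argument compressed into a few lines.
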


\begin{proof}[Proof of Propositions \ref{P:BTVloc} and \ref{P:BTVlocG}]
We consider the truncation $\tau_K$ defined at the beginning of Section \ref{S:RateH}.  If $a$ is constant, we fix $0<K < \eta$, consequently Theorem \ref{T:BHellHR0} holds. In the case   $||a' ||_{\infty}>0$, we fix  $0<K < \min( \eta, \frac{1}{2 || a'||_{\infty}})$, then  {\bf A1} is satisfied for $g \tau_K$ and we can apply Theorem \ref{T:BHellG}.

 Proceeding as in the proof of Theorem \ref{T:BTVHR0} (i), we can define the processes on the same probability space such that 
 $(X_{1/n},\overline{X}_{1/n},\tilde{X}_{1/n})= (X_{1/n}^{K}, \overline{X}^K_{1/n}, \tilde{X}^K_{1/n})$ on an event $\Omega_{K,n}$ with
 \begin{equation*}
\mathbb{P}( \Omega_{K,n}^c) \leq \frac{C(\alpha)}{n }.
\end{equation*}
The constant depends on $K$ but since $K$ is fixed we omit it.
The result follows then immediately from Theorems \ref{T:BHellHR0} and \ref{T:BHellG}.

\end{proof}
\subsection{Examples} \label{S:ex}
To end this section, we discuss the optimality of the previous upper bounds by establishing lower bounds for the local total variation distance for specific stochastic equations. We consider an Ornstein-Uhlenbeck process driven by a stable L\'evy process and the stochastic exponential. 

\quad

\noindent
{\it Stable Ornstein-Uhlenbeck process.}
We assume that $(X_t)_{t \geq 0}$ solves the equation
\begin{equation} \label{E:OU}
X_t=x - \int_0^t X_s ds + S_t^{\alpha}, \quad x \neq 0
\end{equation}
where $(S_t^{\alpha})_{t \geq 0}$ is a stable process with characteristic function $\E(e^{iu S_1^{\alpha}})=e^{-|u|^{\alpha}}$, $\alpha \in (0,2)$. 
The next result shows that the rates of Proposition \ref{P:BTVloc} are reached.
\begin{prop}
For the stable Ornstein-Uhlenbeck process \eqref{E:OU}, we have for $n$ large enough
$$
d_{TV}(X_{1/n},\overline{X}_{1/n}) \geq \frac{C(\alpha)}{n}
$$
and for the Euler scheme
$$
d_{TV}(X_{1/n},\tilde{X}_{1/n}) \geq \begin{cases}
\frac{C(\alpha)}{n} \; \mbox{ if } \alpha > 1\\
\frac{C(\alpha)}{n^{2-1/ \alpha}} \; \mbox{ if } 1/2 < \alpha \leq 1
\end{cases}
$$
where $C(\alpha) >0$ and depends on $x$ for the Euler scheme.
\end{prop}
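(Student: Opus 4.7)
The plan is to exploit the linearity of the drift $b(x) = -x$ and the constant diffusion $a \equiv 1$ to obtain closed forms for the three laws, and then to apply the elementary Fourier lower bound $d_{TV}(P,Q) \geq \tfrac{1}{2}|\widehat P(u) - \widehat Q(u)|$ at a well-chosen frequency.

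First, variation of constants gives $X_t = xe^{-t} + \int_0^t e^{-(t-s)}\,dS_s^\alpha$; the ODE flow is $\xi_t(x) = xe^{-t}$ and the Euler drift step is $\tilde\xi_t(x) = x(1-t)$. Combined with the characteristic functional $\mathbb{E}[\exp(iu \int_0^T \phi\, dS^\alpha)] = \exp(-|u|^\alpha \int_0^T |\phi|^\alpha ds)$, the characteristic functions at time $1/n$ read
$$
\widehat X_{1/n}(u) = e^{iuxe^{-1/n}} e^{-|u|^\alpha s_n}, \qquad \widehat{\overline X}_{1/n}(u) = e^{iuxe^{-1/n}} e^{-|u|^\alpha/n}, \qquad \widehat{\tilde X}_{1/n}(u) = e^{iux(1-1/n)} e^{-|u|^\alpha/n},
$$
where $s_n := (1 - e^{-\alpha/n})/\alpha = 1/n - \alpha/(2n^2) + O(n^{-3})$. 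I also set $\delta := xe^{-1/n} - x(1-1/n) = x/(2n^2) + O(n^{-3})$, $A := e^{-|u|^\alpha s_n}$, and $B := e^{-|u|^\alpha/n}$.

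For $\overline X_{1/n}$ the two laws share the same centering, so $|\widehat X_{1/n}(u) - \widehat{\overline X}_{1/n}(u)| = |A-B|$; evaluating at the critical frequency $|u|^\alpha = n$ yields $A \to e^{-1}$ and $|u|^\alpha(1/n - s_n) \sim \alpha/(2n)$, hence $|A-B| \sim \alpha/(2en)$, giving $d_{TV}(X_{1/n}, \overline X_{1/n}) \geq C(\alpha)/n$. For the Euler scheme I use the identity
$$
|Ae^{iu\delta} - B|^2 = (A-B)^2 + 2AB(1-\cos(u\delta)),
$$
which cleanly separates the scale and drift contributions. At the same frequency $|u|^\alpha = n$, the factors $A,B$ stay bounded away from $0$ and $\infty$, $(A-B)^2 \asymp 1/n^2$, $u\delta = x/(2n^{2-1/\alpha}) \to 0$ (since $\alpha > 1/2$) and $1-\cos(u\delta) \asymp (u\delta)^2 \asymp x^2/n^{4-2/\alpha}$. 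Comparing exponents, for $\alpha > 1$ we have $4-2/\alpha > 2$ and the scale term dominates, giving rate $1/n$; for $1/2 < \alpha \leq 1$ we have $4-2/\alpha \leq 2$ and the drift term dominates, giving rate $|x|/n^{2-1/\alpha}$, with the dependence on $x$ entering through $\delta \propto x$.

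The only genuine subtlety is the calibration $|u|^\alpha = n$: this is the threshold at which the exponential factors $A, B$ remain of order one while $|u\delta|$ still vanishes, so that both $(A-B)^2$ and the quadratic expansion of $1-\cos(u\delta)$ contribute at their leading orders. Everything else reduces to Taylor expansion of analytic functions.
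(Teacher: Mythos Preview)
Your proof is correct and takes a genuinely different route from the paper. The paper works directly with the densities: it writes $p_{1/n}$, $\overline p_{1/n}$, $\tilde p_{1/n}$ as scaled and shifted copies of the stable density $\varphi_\alpha$, restricts the $L^1$ integral $\int|p-q|$ to a bounded interval, Taylor-expands $\varphi_\alpha$ in the scale ratio $\sigma_{0,n}/\sigma_n$ (and, for the Euler scheme, also in the drift mismatch $d_n/\sigma_n$), and then uses dominated convergence to show that the resulting integrals converge to strictly positive quantities of the form $\int_0^1|\varphi_\alpha + y\varphi_\alpha'|\,dy$ or $\int_0^1|\varphi_\alpha'|\,dy$. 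You instead use the elementary Fourier lower bound $d_{TV}\geq \tfrac12|\widehat P(u)-\widehat Q(u)|$ at the single calibrated frequency $|u|^\alpha=n$, together with the closed-form stable characteristic functions and the Pythagorean identity $|Ae^{iu\delta}-B|^2=(A-B)^2+2AB(1-\cos(u\delta))$ to separate the scale and drift contributions. Your argument is shorter and needs only the explicit characteristic function of $S^\alpha$, bypassing any smoothness or positivity properties of the stable density; the paper's density approach, in exchange, makes the limiting constants more explicit as integrals of $\varphi_\alpha$ and $\varphi_\alpha'$. One harmless slip: your $\delta$ has the wrong sign, since $e^{-1/n}-(1-1/n)=-1/(2n^2)+O(n^{-3})$, but only $|u\delta|$ and $\cos(u\delta)$ enter the argument.
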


\begin{proof} For this process, we can check (using the scaling property of the stable distribution) that $X_{1/n}$, $\overline{X}_{1/n}$ and  $\tilde{X}_{1/n}$ have the following distributions :
$$
X_{\frac{1}{n}}  \stackrel{\mathcal{L}}{=} xe^{-\frac{1}{n}} + e^{-\frac{1}{n} } \left(\int_0^{\frac{1}{n}} e^{\alpha u} du\right)^{1/ \alpha} S_1^{\alpha} = xe^{-\frac{1}{n}} + \left( \frac{1-e^{-\alpha/n}}{\alpha}\right)^{1/ \alpha} S_1^{\alpha},
$$
$$
\overline{X}_{\frac{1}{n}}   \stackrel{\mathcal{L}}{=}  xe^{-\frac{1}{n}} + \frac{1}{n^{1/ \alpha}} S_1^{\alpha}, \quad \quad
\tilde{X}_{\frac{1}{n}}  \stackrel{\mathcal{L}}{=} x(1- \frac{1}{n} ) + \frac{1}{n^{1/ \alpha}} S_1^{\alpha}.
$$
We denote by $\varphi_{\alpha}$ the density of the stable variable $S_1^{\alpha}$ and
we set 
$\sigma_{0,n}=\frac{1}{n^{1/ \alpha}}$,  $\sigma_{n}=\left( \frac{1-e^{-\alpha/n}}{\alpha}\right)^{1/ \alpha}$.
We check easily that
\begin{equation} \label{E:DLOU}
\frac{\sigma_{0,n} }{ \sigma_{n}}=1+ \frac{1}{2n} + o(\frac{1}{n} ).
\end{equation} 
With this notation, we have
$$
p_{1/n}(x,y)=\frac{1}{\sigma_n} \varphi_{\alpha}\left( \frac{y- xe^{-\frac{1}{n}}}{\sigma_n}\right), \quad 
\overline{p}_{1/n}(x,y)=\frac{1}{\sigma_{0,n}}\varphi_{\alpha}\left( \frac{y- xe^{-\frac{1}{n}}}{\sigma_{0,n}}\right), 
$$
$$
\tilde{p}_{1/n}(x,y)=\frac{1}{\sigma_{0,n}}\varphi_{\alpha}\left( \frac{y- x(1-\frac{1}{n})}{\sigma_{0,n}}\right)
$$
Consequently, we obtain
\begin{eqnarray*}
d_{TV}(X_{1/n},\overline{X}_{1/n})&=  \frac{1}{2} \int_{\R} | \frac{\varphi_{\alpha}\left( \frac{y- xe^{-\frac{1}{n}}}{\sigma_n}\right)}{\sigma_{n} }
- \frac{\varphi_{\alpha}\left( \frac{y- xe^{-\frac{1}{n}}}{\sigma_{0,n}}\right)}{\sigma_{0,n}} | dy \\
&=  \frac{1}{2} \int_{\R} | \frac{\sigma_{0,n} }{ \sigma_{n}}\varphi_{\alpha}\left( \frac{\sigma_{0,n}}{\sigma_n} y\right)
- \varphi_{\alpha}\left( y\right) | dy  \\
& \geq   \frac{1}{2} \int_0^1  | \frac{\sigma_{0,n} }{ \sigma_{n}}\varphi_{\alpha}\left( \frac{\sigma_{0,n}}{\sigma_n} y\right)
- \varphi_{\alpha}\left( y\right) | dy.
\end{eqnarray*}
Since $\varphi_{\alpha}$ is continuously differentiable, we have the expansion
$$
\varphi_{\alpha}\left( \frac{\sigma_{0,n}}{\sigma_n} y\right)=\varphi_{\alpha}\left( y\right)+y(\frac{\sigma_{0,n}}{\sigma_n}  -1) \varphi_{\alpha}^{\prime}(c_{y,n}), \quad c_{y,n} \in (y, \frac{\sigma_{0,n}}{\sigma_n} y),
$$
and we deduce
\begin{eqnarray*}
d_{TV}(X_{1/n},\overline{X}_{1/n})& \geq   \frac{1}{2} | \frac{\sigma_{0,n}}{\sigma_n}  -1 | \int_0^1  |\varphi_{\alpha}\left( y\right)+y \frac{\sigma_{0,n} }{ \sigma_{n}}\varphi_{\alpha}^{\prime}\left( c_{y,n}\right) | dy.
\end{eqnarray*}
But by dominated convergence 
$$
\int_0^1  |\varphi_{\alpha}\left( y\right)+y \frac{\sigma_{0,n} }{ \sigma_{n}}\varphi_{\alpha}^{\prime}\left( c_{y,n}\right) | dy \xrightarrow{ n \rightarrow  \infty}  \int_0^1  |\varphi_{\alpha}\left( y\right)+y \varphi_{\alpha}^{\prime}\left(y\right) | dy >0,
$$
and we conclude using \eqref{E:DLOU} that $d_{TV}(X_{1/n},\overline{X}_{1/n}) \geq \frac{C(\alpha)}{n}$.

For the Euler scheme, we have similarily
\begin{eqnarray*}
d_{TV}(X_{1/n},\tilde{X}_{1/n})&
& \geq   \frac{1}{2} \int_0^1  | \frac{\sigma_{0,n} }{ \sigma_{n}}\varphi_{\alpha}\left( \frac{\sigma_{0,n}}{\sigma_n} y+ x\frac{1-\frac{1}{n}-e^{-\frac{1}{n}}}{ \sigma_n}\right)
- \varphi_{\alpha}\left( y\right) | dy.
\end{eqnarray*}
Setting $f_n(y)=\frac{\sigma_{0,n} }{ \sigma_{n}}\varphi_{\alpha}\left( \frac{\sigma_{0,n}}{\sigma_n} y+ x\frac{1-\frac{1}{n}-e^{-\frac{1}{n}}}{ \sigma_n}\right)- \varphi_{\alpha}\left( y\right)$ and $d_n=1-\frac{1}{n}-e^{-\frac{1}{n}}$, some easy calculus give 
\begin{eqnarray*}
|f_n(y)|  = | \frac{\sigma_{0,n}}{\sigma_n}  -1 | 
\left| \varphi_{\alpha}\left( y\right)+y \frac{\sigma_{0,n} }{ \sigma_{n}}\varphi_{\alpha}^{\prime}\left( c_{y,n}\right)
+ x(\frac{\sigma_{0,n} }{ \sigma_{n}})^2  \frac{d_n}{\sigma_{0,n} ( \frac{\sigma_{0,n}}{\sigma_n}  -1  )}    \varphi_{\alpha}^{\prime}( c_{y,n}) \right|,
\end{eqnarray*}
with $c_{y,n} \in (y, \frac{\sigma_{0,n}}{\sigma_n} y+x\frac{d_n}{\sigma_n})$. Moreover, we have
$$
\frac{d_n}{\sigma_{0,n} ( \frac{\sigma_{0,n}}{\sigma_n}  -1  )}=-\frac{n^{1/ \alpha}}{n} (1 +o(1)) \mbox{ and } \frac{d_n}{\sigma_n}=-\frac{n^{1/ \alpha}}{2n^2} (1 +o(1)).
$$
This finally gives by dominated convergence
$$
d_{TV}(X_{1/n},\tilde{X}_{1/n}) \geq
\begin{cases}
\frac{C}{n} \int_0^1 |\varphi_{\alpha}\left( y\right)+y \varphi_{\alpha}^{\prime}\left(y\right) | dy \; \mbox{ if } \alpha >1 \\
\frac{C}{n} \int_0^1 |\varphi_{\alpha}\left( y\right)+y \varphi_{\alpha}^{\prime}\left(y\right)-x \varphi_{\alpha}^{\prime}\left(y\right) | dy \; \mbox{ if } \alpha =1 \\
\frac{C}{n^{2-1/ \alpha}} |x|  \int_0^1 | \varphi_{\alpha}^{\prime}\left(y\right) | dy \; \mbox{ if } \frac{1}{2} < \alpha < 1 .
\end{cases}
$$

\end{proof}
\quad

\noindent
{\it Stochastic exponential.}
We now consider the process $(X_t)_{t \geq 0}$ that solves
\begin{equation} \label{E:SE}
X_t=1+  \int_0^t X_{s-} d S_s^{\alpha, \tau}, 
\end{equation}
where $(S_t^{\alpha,\tau})_{t \geq 0}$ is a truncated stable process with L\'evy measure given by
$$
F(dz)=\frac{c_0}{|z|^{\alpha+1} }1_{\{ |z| \leq 1/2\} } dz,
$$
and admitting the representation $S_t^{\alpha,\tau}= \int_0^t \int_{\R} z \tilde{\mu}(ds,dz)$.
Since the equation has no drift, we only consider the Euler scheme and we obtain the following result.
\begin{prop}
For the stochastic exponential \eqref{E:SE}, we have for $\alpha \in (1,2)$ and for $n$ large enough
$$
d_{TV}(X_{1/n},\tilde{X}_{1/n}) \geq \frac{C(\alpha)}{n^{1/ \alpha}  (\log n)^{2/ \alpha}}, \quad C(\alpha)>0.
$$
\end{prop}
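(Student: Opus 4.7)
The plan is to exhibit an explicit event witnessing a noticeable gap between the laws of $X_{1/n}$ and $\tilde X_{1/n}$, exploiting the multiplicative versus additive structure on the event of a single moderate jump.

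\medskip

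\textbf{Structural reduction.} Since $b=0$ and $F$ is symmetric on $[-1/2,1/2]$, one has $\int z F(dz)=0$, so $\tilde X_{1/n}=1+S^{\alpha,\tau}_{1/n}$ and the Dol\'eans--Dade identity reduces \eqref{E:SE} to the strictly positive product
\[
X_{1/n}=\prod_{s\le 1/n}(1+\Delta S_s).
\]
The two centred variables differ at second order by $\tfrac12\bigl((S^{\alpha,\tau}_{1/n})^2-[S^{\alpha,\tau},S^{\alpha,\tau}]_{1/n}\bigr)$, which has mean zero.

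\medskip

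\textbf{Jump splitting.} Introduce a threshold $h=h_n$ and decompose $S^{\alpha,\tau}=S^{sm}+S^{bg}$ at $|z|\le h$ versus $|z|>h$. The two pieces are independent; $S^{bg}$ is compound Poisson with parameter $\mu=\lambda_h/n\sim c_0/(\alpha n h^{\alpha})$. Let $N$ be the number of big jumps on $[0,1/n]$. On the event $\{N=1\}$ with a single big jump of signed magnitude $\epsilon$, independence yields
\[
X_{1/n}=(1+\epsilon)\,\mathcal E(S^{sm})_{1/n},\qquad \tilde X_{1/n}=1+\epsilon+S^{sm}_{1/n}.
\]
Viewed around the common centre $1+\epsilon$, the first law is essentially a $(1+\epsilon)$-dilation of $\mathcal E(S^{sm})_{1/n}-1\approx S^{sm}_{1/n}$, while the second is a pure translate; a standard dilation-versus-translation TV estimate (using that the density of $S^{sm}_{1/n}$ has stable scale $n^{-1/\alpha}$) then produces a conditional TV gap of order $|\epsilon|$.

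\medskip

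\textbf{Test set and calibration.} I would choose a test set $A_n=\{y:|y-1|>c_\alpha h_n\}$ sensitive essentially only to events with at least one big jump. The contributions from $\{N=0\}$ are then negligible, and the main term is of order
\[
\mu\cdot\E\!\left[\,|\epsilon|\bigm|N=1\right]\sim \frac{1}{n}\int_{h}^{1/2}|z|\cdot\frac{c_0}{|z|^{\alpha+1}}\,dz\sim \frac{h^{1-\alpha}}{n(\alpha-1)},
\]
offset by two corrections: the contribution from $\{N\ge 2\}$, of order $\mu^2$, and the higher-order It\^o-type remainder $\mathcal E(S^{sm})_{1/n}-1-S^{sm}_{1/n}=O(n^{-2/\alpha})$ which blurs the dilation argument. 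Balancing these three quantities suggests the scale $h_n\sim(\log n)^{2/(\alpha(\alpha-1))}/n^{1/\alpha}$, which makes $\mu$ polylogarithmically small and, after explicit bookkeeping, produces the announced rate $C(\alpha)/(n^{1/\alpha}(\log n)^{2/\alpha})$.

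\medskip

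\textbf{Main obstacle.} The hardest step is making the density comparison on $\{N=1\}$ quantitative: one needs pointwise bounds on the density of the small-jump exponential $\mathcal E(S^{sm})_{1/n}$ near $1$ at the scale $n^{-1/\alpha}$, uniform in the truncation $h_n$, together with an explicit remainder control in the expansion of $\mathcal E(S^{sm})_{1/n}$ around $1+S^{sm}_{1/n}$. The logarithmic correction in the final rate reflects the delicate balance between the one-big-jump gain, the multi-big-jump loss, and this remainder.
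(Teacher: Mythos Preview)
Your plan has a structural gap at the very first step. Decomposing according to the number $N$ of big jumps and summing the conditional total-variation distances yields an \emph{upper} bound on $d_{TV}(X_{1/n},\tilde X_{1/n})$ (this is just convexity of TV for mixtures with identical weights), not the lower bound you need. For a lower bound you must exhibit a single Borel set $A$ and control $|\PP(X_{1/n}\in A)-\PP(\tilde X_{1/n}\in A)|$ directly. Your candidate $A_n=\{|y-1|>c_\alpha h_n\}$ does not capture the dilation effect: on $\{N=1\}$ with big jump $\epsilon$, both $X_{1/n}\approx 1+\epsilon$ and $\tilde X_{1/n}\approx 1+\epsilon$ lie in $A_n$ with probability essentially $1$, so the contribution of that stratum to the difference is far smaller than your claimed $|\epsilon|$. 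Your calibration also does not close: with $h_n\sim n^{-1/\alpha}(\log n)^{2/(\alpha(\alpha-1))}$ one gets $\mu\sim(\log n)^{-2/(\alpha-1)}$, hence $\mu^2$ is only polylogarithmically small and swamps the target rate $n^{-1/\alpha}(\log n)^{-2/\alpha}$.

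The paper's argument is entirely different and much shorter. Writing $X_{1/n}=e^{Y_{1/n}}$ via Dol\'eans--Dade, with $Y_{1/n}=S_{1/n}^{\alpha,\tau}-J_{1/n}$ and $J_{1/n}=\int_0^{1/n}\!\int(z-\log(1+z))\,\mu(ds,dz)\ge\tfrac14\int_0^{1/n}\!\int z^2\,\mu(ds,dz)\ge 0$, the fixed test event $\{y\ge 1\}$ gives
\[
d_{TV}(X_{1/n},\tilde X_{1/n})\ \ge\ \PP\!\Bigl(0\le S_{1/n}^{\alpha,\tau}\le \tfrac14\!\int_0^{1/n}\!\!\int z^2\,\mu(ds,dz)\Bigr).
\]
Intersect with the event $A_n=\{\mu([0,1/n]\times\{|z|\ge\varepsilon_n\})\ge 1\}$, on which the random right endpoint is at least $\varepsilon_n^2/4$; choose $\varepsilon_n=(2c_0/(\alpha n\log n))^{1/\alpha}$ so that $\PP(A_n^c)=e^{-\log n}=1/n$; finally couple $S^{\alpha,\tau}$ with the full stable process $S^\alpha$ on $[0,1/n]$ (at cost $C(\alpha)/n$) and use scaling plus positivity of the stable density at $0$ to get $\PP(0\le S_1^\alpha\le C n^{1/\alpha}\varepsilon_n^2)\asymp n^{-1/\alpha}(\log n)^{-2/\alpha}$, which dominates the $O(1/n)$ corrections precisely when $\alpha>1$. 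No density estimates and no conditional-TV bookkeeping are needed.
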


\begin{proof}
We have $\tilde{X}_{1/n}= 1+ S_{1/n}^{\alpha,\tau}$ and from It\^{o}'s formula, for $t \geq 0$,  $X_t=e^{Y_t}$ (see \cite{DA}) where
\begin{eqnarray*}
Y_t=\int_0^t \int_{\R} \log(1+z) \tilde{\mu}(ds,dz) + \int_0^t \int_{\R} ( \log(1+z)-z) F(dz) ds \\
 = S_t^{\alpha,\tau}+ \int_0^t \int_{0<|z| \leq 1/2} (\log(1+z)-z) \mu(ds,dz).
\end{eqnarray*}
Observing that for $0 < |z| \leq 1/2$, we have $z-\log(1+z) \geq z^2/4$, we deduce
\begin{eqnarray*}
d_{TV}(X_{1/n},\tilde{X}_{1/n}) &  \geq &  | \PP(X_{1/n} \geq 1)- \PP(\tilde{X}_{1/n} \geq 1) | \\
&  = & | \PP(Y_{1/n} \geq 0)- \PP(S_{1/n}^{\alpha,\tau} \geq 0)|  \\
&   \geq & \PP\left( 0 \leq  S_{1/n}^{\alpha,\tau} \leq  \frac{1}{4}\int_0^t \int_{0<|z| \leq 1/2} z^2 \mu(ds,dz)\right). 
\end{eqnarray*}
Now, for $\varepsilon_n >0$, we consider the event  $A_n=\{ \mu([0,1/n] \times \{ \varepsilon_n \leq |z| \leq 1/2 \}) \geq 1\}$. 
 We remark that on $A_n$, $\int_0^t \int_{0<|z| \leq 1/2} z^2 \mu(ds,dz) \geq \varepsilon_n^2$, this yields
$$
d_{TV}(X_{1/n},\tilde{X}_{1/n}) \geq \PP \left( \{ 0 \leq  S_{1/n}^{\alpha,\tau} \leq  \frac{\varepsilon_n^2}{4}\} \cap A_n\right) \geq \PP \left(  0 \leq  S_{1/n}^{\alpha,\tau} \leq  \frac{\varepsilon_n^2}{4} \right) -\PP(A_n^c).
$$
As done previously, we consider on the same probability space the stable process $S^{\alpha}$ and the truncated stable process $S^{\alpha, \tau}$ such that $S_{1/n}^{\alpha, \tau}=S_{1/n}^{\alpha}$ on $\Omega_n$ with $\PP(\Omega_n^c)=C(\alpha)/n$, then we deduce using that $S_{1/n}^{\alpha}$ has the  distribution of $\frac{1}{n^{1/ \alpha}} S_1^{\alpha}$
$$
d_{TV}(X_{1/n},\tilde{X}_{1/n}) \geq  \PP \left(  0 \leq  S_{1}^{\alpha} \leq  \frac{n^{1/ \alpha}\varepsilon_n^2}{4}\right) - \PP(A_n^c)- 
\frac{C(\alpha)}{n}.
$$
Since $\mu([0,1/n] \times \{ \varepsilon_n \leq |z| \leq 1/2 \})$ has a Poisson distribution with parameter
$$
\lambda_n =  \frac{2 c_0}{\alpha n} (\frac{1}{\varepsilon_n^{\alpha}} -2^{\alpha}),
$$
we have 
$
\PP(A_n^c) =e^{-\lambda_n}=e^{- \frac{2 c_0}{\alpha n \varepsilon_n^{\alpha}}} e^{\frac{C(\alpha)}{ n}}.
$
Choosing $\varepsilon_n= (2c_0/(\alpha n \log(n))^{1/ \alpha}$, we finally obtain
$$
d_{TV}(X_{1/n},\tilde{X}_{1/n}) \geq  \PP \left(  0 \leq  S_{1}^{\alpha} \leq  \frac{C(\alpha)}{(n (\log n)^2)^{1/ \alpha}}\right) - \frac{1}{n} [e^{\frac{C(\alpha)}{ n}} +C(\alpha)].
$$
Since the density of $S_1^{\alpha}$  is continuous and strictly positive (see \cite{Sato}),
 we deduce for $n$ large enough if  $\alpha >1$  that $
d_{TV}(X_{1/n},\tilde{X}_{1/n}) \geq \frac{C(\alpha)}{(n (\log n)^2)^{1/ \alpha}}$ with $C(\alpha)>0$.
\end{proof}

\begin{rem} \label{R:brownien}
If we replace the truncated stable process in \eqref{E:SE} by a Brownian motion $(B_t)_{t \geq 0}$, we have $X_{1/n}= e^{B_{1/n}-\frac{1}{2n}}$ and 
$\tilde{X}_{1/n}=1+B_{1/n}$. Consequently we deduce immediately that
\begin{eqnarray*}
d_{TV}(X_{1/n},\tilde{X}_{1/n})  \geq | \PP(X_{1/n} \geq 1)- \PP(\tilde{X}_{1/n} \geq 1) | \\
  =\PP(0 \leq B_1 \leq \frac{1}{2 \sqrt{n}}) 
  \geq \frac{C}{\sqrt{n}}.
\end{eqnarray*}
\end{rem}

\section{Local Hellinger distance and Malliavin calculus} \label{S:Malliavin}

This section is devoted to the proof of Theorems \ref{T:BHellHR0} and \ref{T:BHellG}.
 Our methodology consists in writing
the Hellinger distance as the expectation of a Malliavin weight and to control this weight. We define  Malliavin calculus with respect to the truncated L\'evy process $(L^K_t)$ specified in Section \ref{S:Not}, recalling that if {\bf A1} holds the additional truncation is useless.

\subsection{Interpolation and rescaling}
The first step consists in introducing a rescaled interpolation between the processes $(X^K_t)_{0 \leq t \leq 1/n}$ and $(\overline{X}^K_t)_{0 \leq t \leq 1/n}$ (or $(\tilde{X}^K_t)_{0 \leq t \leq 1/n}$) starting from $x$, defined in Section \ref{S:Not}.

Let us define $Y^{K,n,r}$ for $0 \leq r \leq 1$ and $0 \leq t \leq 1$ by
\begin{eqnarray} \label{E:Y}
Y^{K,n,r}_t & = & x +\frac{1}{n} \int_0^t (r b(Y_s^{K,n,r})+(1-r)b(\xi_s^{n}(x)))ds   \\
 & & + \frac{1}{n^{1/\alpha} }\int_0^t (r a(Y_{s-}^{K,n,r})+(1-r)a(x)) d L_s^{K,n} \nonumber
\end{eqnarray}
with 
\begin{equation} \label{E:REDO}
\xi_t^{n}(x)=x+ \frac{1}{n} \int_0^tb( \xi_s^{n}(x)) ds,
\end{equation}
 and where $(L_t^{K,n})_{t \in [0,1]} $ is a L\'evy process admitting the decomposition (using the symmetry of the L\'evy measure)
 \begin{equation} \label{E:Ln}
L_t^{K,n}= \int_0^t \int_{\mathbb{R}} z \tilde{\mu}^{K,n}(ds, dz), \quad t \in [0,1],
\end{equation}
where $\tilde{\mu}^{K,n}$ is a compensated Poisson random measure, $\tilde{\mu}^{K,n}= \mu^{K,n}-\overline{\mu}^{K,n}$,  with compensator $\overline{\mu}^{K,n}(dt,dz)= dt \frac{g(z/n^{1/ \alpha})}{\abs{z}^{\alpha+1}} \tau_K(z/n^{1/ \alpha})1_{\mathbb{R} \setminus \{0\}}(z) dz$. 


By construction, the process $(L_t^{K,n})_{t \in [0,1]}$ is equal in law to the rescaled truncated process $(n^{1/ \alpha} L_{t/n }^{K})_{t \in [0,1]}$. Moreover
 if $r=0$, $Y_1^{K,n,0}$ has the distribution of $\overline{X}^K_{1/n}$ starting from $x$, and if $r=1$, $Y_1^{K,n,1}$ has the distribution of $X^K_{1/n}$ starting from $x$, so we have $H_x(p^K_{1/n}, \overline{p}^K_{1/n})=H_x(Y_1^{K,n,1}, Y_1^{K,n,0})$.
 
For the Euler scheme,  to study  the Hellinger distance $H_x(p^K_{1/n}, \tilde{p}^K_{1/n})$, we proceed as previously, replacing the interpolation $Y^{K,n,r}$ by $\tilde{Y}^{K,n,r}$ with
\begin{eqnarray} 
\tilde{Y}^{K,n,r}_t  & =  & x +\frac{1}{n} \int_0^t [r b(\tilde{Y}_s^{K,n,r})+(1-r)b(x)]ds  \label{E:YHR0tilde}\\  
& & + \frac{1}{n^{1/\alpha} }\int_0^t (r a(Y_{s-}^{K,n,r})+(1-r)a(x)) d L_s^{K,n}.  \nonumber
\end{eqnarray}  
 We check easily that $\tilde{Y}^{K,n,1}_1$ has the distribution of $X^K_{1/n}$ starting from $x$ and $\tilde{Y}^{K,n,0}_1$ the distribution of $\tilde{X}^K_{1/n}$ starting from $x$.
 
 To simplify the notation, we set
\begin{align} \label{E:coeff}
b(r,y,t)= r b(y)+(1-r)b(\xi_t^n(x)) \\
\tilde{b}(r,y)= r b(y)+(1-r)b(x) \\
a(r,y)=r a(y)+(1-r)a(x),
\end{align}
so we have
$$
d Y_t^{K,n,r}= \frac{1}{n} b(r,Y_t^{K,n,r},t) dt + \frac{1}{n^{1/ \alpha} }a(r, Y_{t-}^{K,n,r}) d L_t^{K,n},
$$
$$
d \tilde{Y}_t^{K,n,r}= \frac{1}{n} \tilde{b}(r,\tilde{Y}_t^{K,n,r}) dt + \frac{1}{n^{1/ \alpha} }a(r, \tilde{Y}_{t-}^{K,n,r}) d L_t^{K,n}.
$$
Note that $\forall r \in [0,1]$, $\forall y$, $a(r,y) \geq \underline{a} >0$.
\subsection{Integration by Part}
For the reader convenience, we recall some results on Malliavin calculus for jump processes, before stating our main results. We follow \cite{CG18} Section 4.2 and also refer to  \cite{BGJ} for a complete presentation. We will work on the Poisson space associated to the measure $\mu^{K,n}$ defining the process $(L_t^{K,n})$  assuming that $n$ is fixed. By construction, the support of $\mu^{K,n}$ is contained in $[0,1] \times E_n$, where 
\begin{equation*} \label{E:En}
E_n= \{ z \in \mathbb{R}; \abs{z} <
K n^{1/ \alpha} \}.
\end{equation*}

We recall that the measure $\mu^{K,n}$ has compensator
\begin{equation} \label{E:Compens}
\overline{\mu}^{K,n} (dt, dz)=dt \frac{g(z/n^{1/ \alpha})}{ \abs{z}^{\alpha+1}}  \tau_K( z/n^{1/ \alpha})1_{\{\mathbb{R} \setminus \{0\}\}}(z) dz := dt F_{K,n}(z) dz.
\end{equation}
 
We  define  the Malliavin operators $L$ and $\Gamma$ (we omit here the dependence in $n$ and $K$) and their basic properties (see Bichteler, Gravereaux, Jacod, \cite{BGJ} Chapter IV, sections 8-9-10). For  a test function $f : [0,1] \times \mathbb{R} \mapsto \mathbb{R}$ ($f$ is measurable, $\mathcal{C}^2$ with  respect to the second variable, with  bounded derivatives, and $f \in \cap_{p \geq 1} \mathbf{L}^p( dt F_{K,n}(z) dz) $), we set $\mu^{K,n}(f )= \int_0^1 \int_{\mathbb{R}} f(t,z) \mu^{K,n}(dt,dz)$.
As auxiliary function, we consider $\rho : \mathbb{R} \mapsto [0, \infty)$ such that $\rho$ is symmetric, two times differentiable and such that $\rho(z)=z^4$ if $ z \in [0,1/2]$ and $\rho(z)=z^2$ if $z \geq 1$. Thanks to the truncation $\tau_K$, we check that $\rho$, $\rho'$ and
$\rho \frac{F'_{K,n}}{F_{K,n}}$ belong to $\cap_{p \geq 1} \mathbf{L}^p( F_{K,n}(z) dz)$. We also observe that at this stage the truncation is useless if we have for any $p \geq 1$
$$
\int_{\R} |z|^p g(z) dz < \infty.
$$
This assumption is satisfied for the tempered stable process. But to include the stable process in our study, we need to introduce the truncation function.

With the previous notation, we define the Malliavin operator $L$, on a simple functional $\mu^{K,n}(f )$ as follows

\begin{equation*} \label{E:OpL1}
L( \mu^{K,n}(f ))= \frac{1}{2} \mu^{K,n} \left( \rho' f' + \rho \frac{F'_{K,n}}{F_{K,n}} f' + \rho f^{\prime \prime} \right),
\end{equation*}
where $f'$ and $f^{\prime \prime}$ are the derivatives with respect to the second variable. 
This definition permits to construct a linear operator on a space $D \subset  \cap_{p \geq 1}\mathbf{L}^p$ which is self-adjoint :
$$
\forall \Phi, \Psi \in D, \quad \mathbb{E}\Phi L \Psi = \mathbb{E} L \Phi \Psi .
$$
We associate to $L$, the symmetric bilinear operator $\Gamma$ :
\begin{equation*} \label{E:defOpgam}
\Gamma(\Phi, \Psi)= L(\Phi \Psi) -\Phi L \Psi -\Psi L \Phi.
\end{equation*}
If $f$ and $h$ are two test functions, we have :
\begin{equation*} \label{E:Opgam}
\Gamma( \mu^{K,n}(f ), \mu^{K,n}(h ))= \mu^{K,n} \left( \rho f' h'  \right),
\end{equation*}
The operators $L$ and $\Gamma$ satisfy the chain rule property  :
\begin{equation*} \label{E:chainruleL}
LG(\Phi)=G'(\Phi) L\Phi + \frac{1}{2} G''(\Phi) \Gamma(\Phi, \Phi),
\end{equation*}
\begin{equation*} \label{E:chainrule}
\Gamma(G(\Phi), \Psi)=G'(\Phi) \Gamma(\Phi, \Psi).
\end{equation*}
These operators permit to establish the following integration by parts formula (see \cite{BGJ} Theorem 8-10 p.103).
\begin{thm} \label{P:IPP}
Let $\Phi$ and $\Psi$ be random variables in $D$, and $f$ be a bounded function with bounded derivatives up to order two. If $\Gamma(\Phi, \Phi)$ is invertible and $\Gamma^{-1}(\Phi, \Phi) \in \cap_{p \geq 1} \mathbf{L}^p$, we have
\begin{equation} \label{E:IPP}
\mathbb{E}  f'(\Phi) \Psi  = \mathbb{E} f(\Phi) \mathcal{H}_{\Phi}(\Psi),
\end{equation}
with
\begin{equation} \label{E:defpoid}
\mathcal{H}_{\Phi}( \Psi)= \Psi \frac{\Gamma(\Phi, \Gamma(\Phi, \Phi) )  }{ \Gamma^2(\Phi, \Phi) }-2 \Psi  \frac{ L \Phi}{\Gamma(\Phi, \Phi)  } - \frac{\Gamma( \Phi,  \Psi )}{\Gamma(\Phi, \Phi)}.
\end{equation}
\end{thm}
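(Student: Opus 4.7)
The strategy is to represent $f'(\Phi)$ as an operator expression involving $\Gamma$ and then transfer the derivative off $f$ by applying the self-adjointness of $L$ twice, together with the chain rules. The starting point is a direct consequence of the chain rule $\Gamma(G(\Phi), \Phi) = G'(\Phi) \Gamma(\Phi, \Phi)$ applied with $G = f$: since $\Gamma(\Phi, \Phi)$ is invertible with inverse in $\cap_{p \geq 1} \mathbf{L}^p$,
\[
f'(\Phi) = \frac{\Gamma(f(\Phi), \Phi)}{\Gamma(\Phi, \Phi)}.
\]
Setting $C = \Psi/\Gamma(\Phi, \Phi)$ reduces the claim to
$\E \Gamma(f(\Phi), \Phi) \cdot C = \E f(\Phi)\, \mathcal{H}_{\Phi}(\Psi)$.

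For the core computation I would expand the left-hand side via the very definition of $\Gamma$,
\[
\Gamma(f(\Phi), \Phi) = L(f(\Phi)\Phi) - f(\Phi) L\Phi - \Phi L f(\Phi),
\]
multiply by $C$ and take expectation. The self-adjointness $\E AL B = \E LA\cdot B$ applied once to $A = f(\Phi)\Phi$, $B = C$, and once to $A = f(\Phi)$, $B = \Phi C$, gives respectively $\E f(\Phi) \Phi \cdot LC$ and $\E f(\Phi) \cdot L(\Phi C)$, so that
\[
\E f'(\Phi) \Psi = \E f(\Phi)\bigl[\Phi LC - L\Phi \cdot C - L(\Phi C)\bigr].
\]
Substituting $L(\Phi C) = \Phi LC + C L\Phi + \Gamma(\Phi, C)$ cancels the $\Phi LC$ terms and leaves $-\E f(\Phi)\bigl[2 C L\Phi + \Gamma(\Phi, C)\bigr]$.

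It then remains to identify $\Gamma(\Phi, C)$ with $C = \Psi/\Gamma(\Phi, \Phi)$. I would view $C$ as $G(\Psi, \Gamma(\Phi, \Phi))$ with $G(u,v) = u/v$ and invoke the multivariable form of the chain rule stated in \cite{BGJ}, which extends $\Gamma(G(\Phi), \Psi) = G'(\Phi) \Gamma(\Phi, \Psi)$. This yields
\[
\Gamma(\Phi, C) = \frac{\Gamma(\Phi, \Psi)}{\Gamma(\Phi, \Phi)} - \Psi\,\frac{\Gamma(\Phi, \Gamma(\Phi, \Phi))}{\Gamma^2(\Phi, \Phi)},
\]
and plugging this back produces exactly the weight $\mathcal{H}_{\Phi}(\Psi)$ of \eqref{E:defpoid}.

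The main obstacle is not the algebra but the justification of every intermediate step at the level of the Malliavin domain: one must check that each of $C$, $\Phi C$, $\Gamma(\Phi, C)$, $LC$, $L(\Phi C)$ belongs to the domain of $L$ and $\Gamma$ and has all $\mathbf{L}^p$-moments, so that self-adjointness and the chain rules are legitimately applicable. This integrability is precisely what the hypotheses $\Phi, \Psi \in D$, boundedness of $f, f', f''$, and $\Gamma^{-1}(\Phi, \Phi) \in \cap_{p\geq 1} \mathbf{L}^p$ are designed to supply, and the closure and stability properties of the Malliavin operators developed in \cite{BGJ} Chapter IV close the argument by approximation on simple Poisson functionals.
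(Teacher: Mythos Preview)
Your derivation is correct: the algebraic manipulation starting from the chain rule $f'(\Phi)=\Gamma(f(\Phi),\Phi)/\Gamma(\Phi,\Phi)$, the two applications of self-adjointness of $L$, the cancellation via $L(\Phi C)=\Gamma(\Phi,C)+\Phi LC+CL\Phi$, and the expansion of $\Gamma(\Phi,\Psi/\Gamma(\Phi,\Phi))$ by the bivariate chain rule all check out line by line and reproduce exactly the weight \eqref{E:defpoid}. Your closing remark about the domain and integrability issues is also the right caveat.

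Note, however, that the paper itself does \emph{not} prove this statement: it is quoted verbatim from \cite{BGJ}, Theorem 8--10, p.~103, with no argument given. So there is no ``paper's proof'' to compare against; what you have written is essentially the standard proof one finds in that reference (and in the Malliavin calculus literature more broadly), and it is the natural route. There is nothing to add beyond the observation that the approximation and closure arguments you allude to at the end are indeed carried out in detail in \cite{BGJ}, Chapter~IV.
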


We apply now the result of Theorem \ref{P:IPP}  to the random variable $Y_1^{K,n,r}$ observing that under {\bf A0} (or {\bf A1}) and {\bf HR}, $(Y_t^{K,n,r})_{t \in [0,1]} \in D$, $\forall r \in [0,1]$ and then  the following Malliavin operators are well defined (see Section 10 in \cite{BGJ}). 
Let us introduce some more notation.
For $0 \leq t \leq 1$, we set
\begin{align}
 \Gamma(Y_t^{K,n,r} , Y_t^{K,n,r}) =U_t^{k,n,r} \label{E:DU}\\
  L(Y_t^{K,n,r}) =\mathbb{L}_t^{K,n,r} .\label{E:DL}
\end{align}
We also introduce the derivative of $Y^{K,n,r}$ with respect to $r$, denoted by $\partial_r Y^{K,n,r}$ and solving the equation
\begin{eqnarray}
d \partial_r Y_t^{K,n,r}=  & \frac{1}{n} \partial_y b(r,Y_t^{K,n,r},t)  \partial_r Y_t^{K,n,r}dt + \frac{1}{n^{1/ \alpha} }\partial_y a(r, Y_{t-}^{K,n,r})  \partial_r Y_{t-}^{K,n,r} d L_t^{K,n} \nonumber \\
&  + \frac{1}{n} \partial_r b(r,Y_t^{K,n,r},t) dt + \frac{1}{n^{1/ \alpha} }\partial_r a(r, Y_{t-}^{K,n,r})  d L_t^{K,n}, \label{E:derY} 
\end{eqnarray}
with $\partial_r Y_0^{K,n,r}=0$ and
\begin{align*}
\partial_r b(r,y,t)= b(y)- b(\xi_t^n(x)), \quad \partial_y b(r,y,t)= rb'(y), \\
\partial_r a(r,y)= a(y)-a(x), \quad \partial_y a(r,y)=ra'(y).
\end{align*}
For the vector $V_t^{K,n,r}= (Y_t^{K,n,r}, \partial_r Y_t^{K,n,r}, U_t^{K,n,r})^T$, we denote by $W_t^{K,n,r}=(W_t^{K,n,r,(i,j)})_{1 \leq i,j \leq 3}$ the matrix $\Gamma(V_t^{K,n,r},V_t^{K,n,r})$ such that
\begin{eqnarray}
U_t^{K,n,r}&= &W_t^{K,n,r,(1,1)} \nonumber\\
\Gamma(Y_t^{K,n,r} , \partial_rY_t^{K,n,r})& =&W_t^{K,n,r,(2,1)} \label{E:DW21}\\
\Gamma(Y_t^{K,n,r} , \Gamma(Y_t^{K,n,r} , Y_t^{K,n,r}) ) & =&W_t^{K,n,r,(3,1)}. \label{E:DW31}
\end{eqnarray}

With this notation, we establish the following bound for $H^2_x(p^K_{1/n}, \overline{p}^K_{1/n})$. It is obvious that the same bound holds for
$H^2_x(p^K_{1/n}, \tilde{p}^K_{1/n})$, replacing  the process $Y^{K,n,r}$ by $\tilde{Y}^{K,n,r}$, but to shorten the presentation we only state the result for $Y^{K,n,r}$.
\begin{thm} \label{T:Mal-Hell}
We assume 
{\bf HR}, {\bf A0} or {\bf A1} and that for any $ r \in [0,1]$, $U_1^{K,n,r}$ is invertible and  $(U_1^{K,n,r})^{-1}\in \cap_{p \geq 1} \mathbf{L}^p$. Then we have
$$
H^2_x(p^K_{1/n}, \overline{p}^K_{1/n})=H^2_x(Y_1^{K,n,1}, Y_1^{K,n,0}) \leq \sup_{r \in [0,1]} \E_x \left| \mathcal{H}_{Y_1^{K,n,r}}( \partial_r Y_1^{K,n,r})\right|^2,
$$
where 
\begin{equation}
\mathcal{H}_{Y_1^{K,n,r}}( \partial_r Y_1^{K,n,r})=\frac{\partial_r Y_1^{K,n,r}}{U_1^{K,n,r}} \frac{ W_1^{K,n,r,(3,1)}}{U_1^{K,n,r}} 
- 2 \partial_r Y_1^{K,n,r} \frac{\mathbb{L}_1^{K,n,r}}{U_1^{K,n,r}}
- \frac{W_1^{K,n,r,(2,1)}}{U_1^{K,n,r}}. \label{E:PMal}
\end{equation}

\end{thm}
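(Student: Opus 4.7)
The plan is to express $H^2$ as a double integral involving the derivative in $r$ of the density $p_r$ of $Y_1^{K,n,r}$, to apply the integration by parts formula \eqref{E:IPP} in order to replace this derivative by the Malliavin weight, and to close with Cauchy--Schwarz and conditional Jensen.

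First, I would invoke the framework of \cite{BGJ} to note that under \textbf{HR} together with \textbf{A0} or \textbf{A1}, the invertibility of $U_1^{K,n,r}$ with $(U_1^{K,n,r})^{-1}\in\cap_{p\geq 1}\mathbf{L}^p$ ensures that $Y_1^{K,n,r}$ has a smooth strictly positive density $p_r$ and that $r\mapsto p_r(y)$ is differentiable. Applying \eqref{E:IPP} with $\Phi=Y_1^{K,n,r}$ and $\Psi=\partial_r Y_1^{K,n,r}$ (defined by \eqref{E:derY}) yields, for every smooth compactly supported $f$,
\begin{equation*}
\partial_r \mathbb{E} f(Y_1^{K,n,r}) = \mathbb{E}\bigl[f'(Y_1^{K,n,r})\,\partial_r Y_1^{K,n,r}\bigr] = \mathbb{E}\bigl[f(Y_1^{K,n,r})\,\mathcal{H}_{Y_1^{K,n,r}}(\partial_r Y_1^{K,n,r})\bigr].
\end{equation*}
Rewriting the left-hand side as $\int f(y)\,\partial_r p_r(y)\,dy$ and conditioning the right-hand side on $\{Y_1^{K,n,r}=y\}$, I identify the score function
\begin{equation*}
\partial_r \log p_r(y) \;=\; \mathbb{E}\bigl[\mathcal{H}_{Y_1^{K,n,r}}(\partial_r Y_1^{K,n,r}) \,\bigm|\, Y_1^{K,n,r}=y\bigr] \;=:\; \mathfrak{h}_r(y).
\end{equation*}

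Next, since $p_r>0$ and is differentiable in $r$, I write $\sqrt{p_1(y)}-\sqrt{p_0(y)} = \tfrac{1}{2}\int_0^1 \mathfrak{h}_r(y)\sqrt{p_r(y)}\,dr$, and Cauchy--Schwarz in $r$ followed by Fubini gives
\begin{equation*}
H^2(Y_1^{K,n,1},Y_1^{K,n,0}) \;=\; \int \bigl(\sqrt{p_1}-\sqrt{p_0}\bigr)^2 dy \;\leq\; \frac{1}{4}\int_0^1 \int \mathfrak{h}_r(y)^2 p_r(y)\,dy\,dr.
\end{equation*}
The inner integral equals $\mathbb{E}|\mathfrak{h}_r(Y_1^{K,n,r})|^2$, and conditional Jensen bounds it by $\mathbb{E}|\mathcal{H}_{Y_1^{K,n,r}}(\partial_r Y_1^{K,n,r})|^2$; bounding the $dr$-integral by its supremum in $r$ yields the claimed inequality. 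The explicit expression \eqref{E:PMal} then follows directly from \eqref{E:defpoid} after substituting the notation introduced in \eqref{E:DU}--\eqref{E:DW31}.

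The main obstacle is not the chain of inequalities, which is standard once the integration by parts is available, but rather verifying that Theorem~\ref{P:IPP} can be applied uniformly in $r\in[0,1]$: one needs $\partial_r Y_1^{K,n,r}$ to belong to the Malliavin domain (so that $\mathcal{H}_{Y_1^{K,n,r}}(\partial_r Y_1^{K,n,r})$ is defined and square-integrable) and one must justify differentiating under the integral when passing from $\partial_r \mathbb{E} f(Y_1^{K,n,r})$ to $\int f(y)\partial_r p_r(y)\,dy$. Both rely on uniform integrability of the Malliavin functionals associated to the interpolation \eqref{E:Y}, which follows from the SDE structure of $Y^{K,n,r}$ and $\partial_r Y^{K,n,r}$ together with the hypothesis on $(U_1^{K,n,r})^{-1}$ in the framework of \cite{BGJ}.
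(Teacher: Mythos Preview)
Your proposal is correct and follows essentially the same approach as the paper: both proofs write $\sqrt{p_1}-\sqrt{p_0}$ as an integral in $r$ of the score $\partial_r\log p_r$, apply Cauchy--Schwarz in $r$, identify the score via the integration by parts formula \eqref{E:IPP} as a conditional expectation of the Malliavin weight, and close with conditional Jensen and a supremum in $r$. The only cosmetic difference is the order of presentation (you derive the score representation first, the paper derives it after bounding $H^2$), and your final paragraph on the technical prerequisites is, if anything, more explicit than what the paper provides.
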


\begin{proof}

We first observe that under {\bf A0} or {\bf A1}, {\bf HR} and assuming  $U_1^{K,n,r}$ invertible with $(U_1^{K,n,r})^{-1}\in \cap_{p \geq 1} \mathbf{L}^p$, $\forall r \in [0,1]$, the random variable $Y_1^{K,n,r}$ (starting from $x$) admits a density for any $r \in [0,1]$. Morerover this density is differentiable with respect to $r$ (the existence and the regularity of the density can be deduced from \cite{BJ83} \cite{BGJ} or \cite{Picard}). We denote by $q^{K,n,r}$ this density and by $\partial_r q^{K,n,r}$ its derivative with respect to $r$. We have
\begin{eqnarray*}
H^2_x(p^K_{1/n},\overline{p}^K_{1/n}) & = & \int_{\R}( \sqrt{ q^{K,n,1}(y)}- \sqrt{q^{K,n,0}(y)} )^2 dy \\
&= & \frac{1}{4}\int_{\R}(\int_0^1 \frac{ \partial_r q^{K,n,r}(y)} {\sqrt{ q^{K,n,r}(y)}}dr)^2 dy \\
 & \leq &  \frac{1}{4} \int_0^1 \E_x \left( \frac{\partial_r q^{K,n,r}}{q^{K,n,r}}(Y_1^{K,n,r})\right)^2 dr.
\end{eqnarray*}
Using the integration by part formula, we obtain a representation for $\frac{\partial_r q^{K,n,r}}{q^{K,n,r}}$.
Let $f$ be a smooth function, by differentiating $ r \mapsto \E f(Y_1^{K,n,r})$,  we obtain
\begin{align*}
\int f(u) \partial_r q^{K,n,r}(u) du  &=   \E f'(Y_1^{K,n,r}) \partial_r Y_1^{K,n,r} \\
 & = \E f(Y_1^{K,n,r}) \mathcal{H}_{Y_1^{K,n,r}}( \partial_r Y_1^{K,n,r}) \\
 & = \E f(Y_1^{K,n,r}) \E[\mathcal{H}_{Y_1^{K,n,r}}( \partial_r Y_1^{K,n,r}) \vert Y_1^{K,n,r}] \\
 & = \int f(u) \E[\mathcal{H}_{Y_1^{K,n,r}}( \partial_r Y_1^{K,n,r}) \vert Y_1^{K,n,r}=u] q^{K,n,r}(u) du.
\end{align*}
This gives the representation 
$$
 \frac{\partial_r q^{K,n,r}}{q^{K,n,r}}( y)= \E_x [\mathcal{H}_{Y_1^{K,n,r}}( \partial_r Y_1^{K,n,r}) | Y_1^{K,n,r}=y],
$$
and we deduce the bound
$$
H^2_x(p^K_{1/n},\overline{p}^K_{1/n}) \leq  \sup_{r \in [0,1]} \E_x \left| \mathcal{H}_{Y_1^{K,n,r}}( \partial_r Y_1^{K,n,r})\right|^2.
$$

\end{proof}
The computation of the weight $\mathcal{H}_{Y_1^{K,n,r}}( \partial_r Y_1^{K,n,r})$ is derived in the next section.


\subsection{Computation of $U_1^{K,n,r}$,  $\mathbb{L}_1^{K,n,r}$ and $W_1^{K,n,r}$} \label{Ss:Techni}

We derive here the stochastic equations satisfied by versions of  processes $(U_t^{K,n,r})_{t \in [0,1]}$, $(\mathbb{L}_t^{K,n,r})_{t \in [0,1]}$ and $(W_t^{K,n,r})_{t \in [0,1]}$, assuming {\bf HR} and {\bf A0} or {\bf A1}.
Using the result of Theorem 10-3 in \cite{BGJ} (we omit the details), we obtain the following equations. These equations are solved in the next sections.

We first check that $(U_t^{K,n,r})$ and $(\mathbb{L}_t^{K,n,r})$ solve respectively  
\begin{eqnarray}
U_t^{K,n,r} = &  \frac{2}{n} \int_0^t \partial_y b(r ,Y_s^{K,n,r},s) U_s^{K,n,r}ds + \frac{2}{n^{1/\alpha}} \int_0^t  \int_{\R} \partial_y a(r, Y_{s-}^{K,n,r}) U_{s-}^{K,n,r}z  \tilde{\mu}^{K,n}(ds,dz) \nonumber\\
 & + \frac{1}{n^{2/ \alpha}} \int_0^t \int_{\R} ( \partial_y a(r, Y_{s-}^{K,n,r}) )^2 U_{s-}^{K,n,r} z^2  \mu^{K,n}(ds,dz) \nonumber \\
 & + \frac{1}{n^{2/ \alpha}} \int_0^t \int_{\R} a(r,Y_{s-}^{K,n,r})^2 \rho(z) \mu^{K,n}(ds,dz).  \label{E:EDSU}
\end{eqnarray}

\begin{eqnarray}
\mathbb{L}_t^{K,n,r}  = &  \frac{1}{n} \int_0^t \partial_y b(r ,Y_s^{K,n,r},s) \mathbb{L}_s^{K,n,r}ds + \frac{1}{n^{1/\alpha}} \int_0^t  \int_{\R} \partial_y a(r, Y_{s-}^{K,n,r}) \mathbb{L}_{s-}^{K,n,r}z  \tilde{\mu}^{K,n}(ds,dz) \nonumber \\
 & +  \frac{1}{2n} \int_0^t \partial^2_y b(r ,Y_s^{K,n,r},s)U_{s-}^{K,n,r} ds
 + \frac{1}{2n^{1/ \alpha}} \int_0^t \int_{\R}  \partial^2_{y} a(r, Y_{s-}^{K,n,r})  U_{s-}^{K,n,r} z  \tilde{\mu}^{K,n}(ds,dz) \nonumber\\
  &
 + \frac{1}{2n^{1/ \alpha}} \int_0^t \int_{\R} a(r,Y_{s-}^{K,n,r}) (\rho'(z) +  \rho(z) \frac{ F'_{K,n}(z)}{F_{K,n}(z)}) \mu^{K,n}(ds,dz). \label{E:EDSL}
\end{eqnarray}

We write now the equation satisfied by the vector $V_t^{K,n,r}=(Y_t^{K,n,r}, \partial_r Y_t^{K,n,r}, U_t^{K,n,r} )^T$, replacing $\mu^{K,n}(ds,dz)$ by $\tilde{\mu}^{K,n}(ds,dz) + ds F_{K,n}(z)dz$ to obtain
$$
d V_t^{K,n,r}= B^{K,n,r}(V_t^{K,n,r},t) dt + \int_{\R} A^{K,n,r}(V_{t-}^{K,n,r},z) \tilde{\mu}^{K,n}(dt,dz)
$$
with $B^{K,n,r}(. ,. , . ,t): \R^3 \mapsto \R^3$ and $A^{K,n,r}:\R^4 \mapsto \R^3$ (precised below) and  $V_0^{K,n,r}=(x,0,0)^T$.

\begin{eqnarray*}
B^{K,n,r,1}(v_1,v_2,v_3,t)& = &  \frac{1}{n} b(r,v_1,t), \\
B^{K,n,r,2}(v_1,v_2,v_3,t) & =  & \frac{1}{n} ( \partial_yb(r,v_1,t) v_2 + \partial_r b(r,v_1,t)), \\
B^{K,n,r,3}(v_1,v_2,v_3,t) & = & \frac{2}{n}  \partial_yb(r,v_1,t) v_3 + \frac{1}{n^{2/ \alpha }} (\partial_y a(r,v_1))^2 v_3 \int_{\R} z^2 F_{K,n}(z)dz \\ 
&  & +  \frac{1}{n^{2/ \alpha}} a(r,v_1)^2 \int_{\R} \rho(z) F_{K,n}(z)dz,
\end{eqnarray*}

$$
A^{K,n,r}(v_1,v_2,v_3, z)= \frac{1}{n^{1/\alpha}} 
\left(
\begin{array}{c}
 a(r,v_1) z \\
( \partial_y a(r,v_1) v_2 + \partial_r a(r,v_1)) z \\
2 \partial_y a(r,v_1) v_3 z + \frac{1}{n^{1/ \alpha}} (\partial_y a(r,v_1))^2 v_3 z^2 
+  \frac{1}{n^{1/ \alpha}} a(r,v_1)^2 \rho(z) 
\end{array}
\right).
$$
We use the notation
$$
D_v B^{K,n,r}(v,t)=\left(
\begin{array}{ccc}
\partial_{v_1} B^{K,n,r,1}(v,t) & \partial_{v_2} B^{K,n,r,1}(v,t) & \partial_{v_3} B^{K,n,r,1}(v,t) \\
\partial_{v_1} B^{K,n,r,2}(v,t) & \partial_{v_2} B^{K,n,r,2}(v,t) & \partial_{v_3} B^{K,n,r,2}(v,t) \\
\partial_{v_1} B^{K,n,r,3}(v,t) & \partial_{v_2} B^{K,n,r,3}(v,t)  & \partial_{v_3} B^{K,n,r,3}(v,t)
\end{array}
\right),
$$
we obtain
$$
D_v B^{K,n,r}(v,t)=
\left(
\begin{array}{ccc}
\frac{1}{n}rb'(v_1) & 0 & 0 \\
\frac{1}{n}[r b''(v_1) v_2 +b'(v_1)] &\frac{1}{n} rb'(v_1) & 0 \\
\partial_{v_1} B^{K,n,r,3}(v,t) & 0 & \partial_{v_3} B^{K,n,r,3}(v,t) 
\end{array}
\right)
$$
with
\begin{eqnarray*}
\partial_{v_1} B^{K,n,r,3}(v,t)  = &  \frac{2}{n}  r b''(v_1) v_3 + \frac{2}{n^{2/ \alpha}} r^2 (a'a'')(v_1) v_3\int_{\R} z^2 F_{K,n}(z)dz  \\
  & +\frac{2}{n^{2/ \alpha}} r a(r,v_1) a'(v_1) \int_{\R} \rho(z) F_{K,n}(z)dz, \\
\partial_{v_3} B^{K,n,r,3}(v,t)  = & \frac{2}{n} rb'(v_1)+ \frac{1}{n^{2/ \alpha}}r^2 a'(v_1)^2 \int_{\R} z^2 F_{K,n}(z)dz.
\end{eqnarray*}
Defining analogously  the matrix $D_v A^{K,n,r}(v,z)$ and the vector $D_z A^{K,n,r}$, we have
$$
D_v A^{K,n,r}(v,z)=
\left(
\begin{array}{ccc}
\frac{1}{n^{1/\alpha}}ra'(v_1)z & 0 & 0 \\
\frac{1}{n^{1/ \alpha}}[r a''(v_1) v_2 +a'(v_1)]z &\frac{1}{n^{1/ \alpha}} ra'(v_1)z & 0 \\
\partial_{v_1} A^{K,n,r,3}(v,z) & 0 & \partial_{v_3} A^{K,n,r,3}(v,z) 
\end{array}
\right)
$$
with
$$
\begin{array}{ccl}
\partial_{v_1} A^{K,n,r,3}(v,z) & = & \frac{2}{n^{1/\alpha}}  r a''(v_1) v_3 z + \frac{2}{n^{2/ \alpha}} r^2 (a'a'')(v_1) v_3 z^2 + 
\frac{2}{n^{2/ \alpha}} r a(r,v_1) a'(v_1) \rho(z) \\
 \partial_{v_3} A^{K,n,r,3}(v,z) & = & \frac{2}{n^{1/ \alpha}}  ra'(v_1)z+ \frac{1}{n^{2/ \alpha}}r^2 a'(v_1)^2 z^2 ,
\end{array}
$$
$$
D_z A^{K,n,r}(v,z)=
\frac{1}{n^{1/ \alpha}}\left(
\begin{array}{c}
a(r,v_1) \\
ra'(v_1) v_2 + (a(v_1)-a(x)) \\
2 ra'(v_1)v_3 + \frac{2}{n^{1/ \alpha}} r^2 a'(v_1)^2 v_3 z + \frac{1}{n^{1/ \alpha}} a(r, v_1)^2 \rho'(z)
\end{array}
\right).
$$

With this notation,  the matrix $W_t^{K,n,r}$ solves 
\begin{eqnarray*} \label{E:EDSW}
W_t^{K,n,r}  =  \int_0^t [W_{s-}^{K,n,r} D_v B^{K,n,r}(V_{s-}^{K,n,r},s)^T + D_v B^{K,n,r}(V_{s-}^{K,n,r},s)(W_{s-}^{K,n,r})^T] ds \hspace{2cm}  \\
   +  \int_0^t \int_{\R} [W_{s-}^{K,n,r} D_v A^{K,n,r}(V_{s-}^{K,n,r},z)^T + D_v A^{K,n,r}(V_{s-}^{K,n,r},z)(W_{s-}^{K,n,r})^T] \tilde{\mu}^{K,n}(ds,dz) \hspace{2cm} \nonumber\\
    + \int_0^t \int_{\R} D_v A^{K,n,r}(V_{s-}^{K,n,r},z) W_{s-}^{K,n,r} D_v A^{K,n,r}(V_{s-}^{K,n,r},z)^T \mu^{K,n}(ds,dz)  
 \hspace{2cm}  \nonumber\\
    + \int_0^t \int_{\R} D_z A^{K,n,r}(V_{s-}^{K,n,r},z) D_z A^{K,n,r}(V_{s-}^{K,n,r},z)^T \rho(z) \mu^{K,n}(ds,dz).\hspace{2cm} 
    \nonumber
\end{eqnarray*}
From this, we extract directly the equations for $W^{K,n,r,(2,1)} = \Gamma(Y^{K,n,r}, \partial_r Y^{K,n,r})$  and  

$W^{K,n,r,(3,1)}= \Gamma( Y^{K,n,r}, \Gamma(Y^{K,n,r}, Y^{K,n,r}))$.

\begin{eqnarray}
W_t^{K,n,r,(2,1)} =  \frac{2}{n} \int_0^t rb'(Y_s^{K,n,r}) W_s^{K,n,r,(2,1)} ds   \hspace{4cm} \label{E:EDSW21}\\
+ \frac{2}{n^{1/ \alpha}} \int_0^t \int_{\R}ra'(Y_{s-}^{K,n,r}) W_{s-}^{K,n,r,(2,1)} z \tilde{\mu}^{K,n}(ds,dz)  \hspace{2cm} \nonumber \\
   + \frac{1}{n^{2/ \alpha}} \int_0^t \int_{\R} r^2 a'(Y_{s-}^{K,n,r})^2 W_{s-}^{K,n,r,(2,1)}z^2 \mu^{K,n}(ds,dz)\quad \quad \quad \quad \nonumber \\
   + \frac{1}{n} \int_0^t (r b''(Y_s^{K,n,r}) \partial_r Y_s^{K,n,r} + b'(Y_s^{K,n,r})) U_s^{K,n,r} ds \quad \quad \quad \quad \nonumber \\
   + \frac{1}{n^{1/\alpha}} \int_0^t \int_{\R}  (r a''(Y_{s-}^{K,n,r}) \partial_r Y_{s-}^{K,n,r} + a'(Y_{s-}^{K,n,r})) U_{s-}^{K,n,r} z\tilde{\mu}^{K,n}(ds,dz) \quad \quad \nonumber \\
    + \frac{1}{n^{2/ \alpha}} \int_0^t \int_{\R} ra'( Y_{s-}^{K,n,r}) (r a''(Y_{s-}^{K,n,r}) \partial_r Y_{s-}^{K,n,r} + a'(Y_{s-}^{K,n,r}))U_{s-}^{K,n,r}z^2 \mu^{K,n}(ds,dz) \quad \quad  \nonumber \\
     + \frac{1}{n^{2/\alpha}} \int_0^t \int_{\R} a(r,Y_{s-}^{K,n,r})( r a'(Y_{s-}^{K,n,r}) \partial_r Y_{s-}^{K,n,r} + a(Y_{s-}^{K,n,r})-a(x)) \rho(z) \mu^{K,n}(ds,dz). \nonumber
\end{eqnarray}

\begin{eqnarray}
W_t^{K,n,r,(3,1)}  =  \frac{3}{n} \int_0^t rb'(Y_s^{K,n,r}) W_s^{K,n,r,(3,1)} ds \hspace{5cm} \label{E:EDSW31}\\
 + \frac{3}{n^{1/ \alpha}} \int_0^t \int_{\R}ra'(Y_{s-}^{K,n,r}) W_{s-}^{K,n,r,(3,1)} z \tilde{\mu}^{K,n}(ds,dz) \quad \quad \nonumber \\
  + \frac{3}{n^{2/ \alpha}} \int_0^t \int_{\R} r^2 a'(Y_{s-}^{K,n,r})^2 W_{s-}^{K,n,r,(3,1)}z^2 \mu^{K,n}(ds,dz) \quad \quad  \nonumber \\
   + \frac{1}{n^{3/ \alpha}} \int_0^t \int_{\R} r^3 a'(Y_{s-}^{K,n,r})^3 W_{s-}^{K,n,r,(3,1)}z^3 \mu^{K,n}(ds,dz) \quad \quad \nonumber \\
  + \frac{2}{n} \int_0^t r b''(Y_s^{K,n,r})  (U_s^{K,n,r})^2 ds +  \frac{2}{n^{1/\alpha}} \int_0^t \int_{\R}  r a''(Y_{s-}^{K,n,r}) (U_{s-}^{K,n,r})^2z \tilde{\mu}^{K,n}(ds,dz) \quad \quad  \nonumber \\
  + \frac{2}{n^{2/ \alpha}} \int_0^t \int_{\R} [ r^2(a'a'')(Y_{s-}^{K,n,r}) U_{s-}^{K,n,r} z^2+ ra(r, Y_{s-}^{K,n,r})a'(Y_{s-}^{K,n,r})\rho(z) ]U_{s-}^{K,n,r} \mu^{K,n}(ds,dz) \quad \quad \nonumber \\
   + \frac{1}{n^{2/ \alpha}} \int_0^t \int_{\R} ra'( Y_{s-}^{K,n,r}) \left(2r a''(Y_{s-}^{K,n,r}) U_{s-}^{K,n,r}z + \frac{2}{n^{1/ \alpha}}r^2(a'a'')(Y_{s-}^{K,n,r})U_{s-}^{K,n,r}z^2  \right. \quad \quad \nonumber \\
   \left.+  \frac{2}{n^{1/ \alpha}}ra(r, Y_{s-}^{K,n,r})a'(Y_{s-}^{K,n,r})\rho(z)\right) U_{s-}^{K,n,r}z\mu^{K,n}(ds,dz) \quad \quad \nonumber \\
   + \frac{1}{n^{2/\alpha}} \int_0^t \int_{\R} a(r,Y_{s-}^{K,n,r})\left(2 r a'(Y_{s-}^{K,n,r})U_{s-}^{K,n,r} + \frac{2}{n^{1/ \alpha}}r^2 a'(Y_{s-}^{K,n,r})^2 U_{s-}^{K,n,r}z \right.  \quad \quad \nonumber \\
  \left.  +  \frac{1}{n^{1/ \alpha}}a(r,Y_{s-}^{K,n,r})^2 \rho'(z)\right) \rho(z) \mu^{K,n}(ds,dz). \hspace{3cm} \nonumber
\end{eqnarray}

\subsection{Proof of Theorem  \ref{T:BHellHR0} ($a$ constant and {\bf A0})} \label{S:ProofC}
\quad
\medskip

{\bf Part (i)}
 Assuming $a$ constant, the interpolation $Y^{K,n,r}$ between \eqref{E:KEDS} and \eqref{E:KEuler} solves the equation
\begin{eqnarray} 
Y^{K,n,r}_t  =  x +\frac{1}{n} \int_0^t [r b(Y_s^{K,n,r})+(1-r)b(\xi_s^{n}(x))]ds   
 + \frac{1}{n^{1/\alpha} } a L_t^{K,n}  \label{E:YHR0}
\end{eqnarray}
with $\xi^{n}(x)$ defined by \eqref{E:REDO} and $L^{K,n}$ by \eqref{E:Ln}.

Now, to apply Theorem \ref{T:Mal-Hell}, we check that $U_1^{K,n,r}$ is invertible and $(U_1^{K,n,r})^{-1}\in \cap_{p \geq 1} \mathbf{L}^p$.

 We start by solving the equations
 \eqref{E:derY}, \eqref{E:EDSU},  \eqref{E:EDSL},  \eqref{E:EDSW21}, \eqref{E:EDSW31} defining respectively  $\partial_r Y_1^{K,n,r}$, $U_1^{K,n,r}$, $\mathbb{L}_1^{K,n,r}$, $W_1^{K,n,r,(2,1)}$ and $W_1^{K,n,r,(3,1)}$. This is done  easily since $a$ is constant.
We define  $(Z_t^{K,n,r})_{t \in [0,1]}$ by
\begin{equation} \label{E:ZHR0}
Z_t^{K,n,r}=e^{\frac{r}{n} \int_0^t b'(Y_s^{K,n,r} )ds} .
\end{equation}
Then we obtain the following explicit expressions.
\begin{eqnarray}
\partial_r Y_1^{K,n,r} & = & \frac{Z_1^{K,n,r}}{n} \int_0^1 (Z_s^{K,n,r} )^{-1} [ b(Y_s^{K,n,r}) -b(\xi_s^{n}(x) )] ds\label{E:derY0}
\end{eqnarray}
\begin{eqnarray}
U_1^{K,n,r} & = & a^2 \frac{(Z_1^{K,n,r})^2 }{n^{2/ \alpha}} \int_0^1 \int_{\R} (Z_{s-}^{K,n,r})^{-2} \rho(z) \mu^{K,n} (ds,dz) \label{E:U0}
\end{eqnarray}
\begin{eqnarray}
\mathbb{L}_1^{K,n,r} & = &  \frac{(Z_1^{K,n,r}) }{2n} \int_0^1 (Z_s^{K,n,r} )^{-1} r b''(Y_s^{K,n,r}) U_{s-}^{K,n,r} ds  \label{E:L0}\\
 &  &    +   \frac{a Z_1^{K,n,r} }{2n^{1/ \alpha}} \int_0^1 \int_{\R} (Z_{s-}^{K,n,r} )^{-1}( \rho'(z)+ \rho(z) \frac{ F'_{K,n}(z)}{F_{K,n}(z)})  \mu^{K,n} (ds,dz) 
 \nonumber 
 \end{eqnarray}
 \begin{equation}
 W_1^{K,n,r,(2,1)}  = \frac{(Z_1^{K,n,r})^2 }{n} \int_0^1 (Z_s^{K,n,r})^{-2} U_s^{K,n,r} [rb''(Y_s^{K,n,r} )\partial_r Y_s^{K,n,r} +b'(Y_s^{K,n,r} )  ]  ds   \label{E:W20}
 \end{equation}
 \begin{eqnarray}
W_1^{K,n,r,(3,1)} & = &  \frac{2r(Z_1^{K,n,r})^3 }{n} \int_0^1(Z_s^{K,n,r})^{-3} (U_s^{K,n,r})^2 b''(Y_s^{K,n,r} )ds   \label{W30} \\
& &  + a^3\frac{(Z_1^{K,n,r})^3 }{n^{3/\alpha}} \int_0^1\int_{\R} (Z_{s-}^{K,n,r})^{-3} \rho'(z)  \rho(z) \mu^{K,n} (ds,dz). \nonumber
\end{eqnarray}
We obviously have the bounds
\begin{equation} \label{E:BZ}
\sup_{t \leq1} | Z_t^{K,n,r}| \leq C(b), \quad  \quad \sup_{t \leq1} | (Z_t^{K,n,r})^{-1}| \leq C(b).
\end{equation}
This implies that
\begin{eqnarray}
\sup_{t \leq 1} |U_t^{K,n,r} | & \leq  \frac{a^2}{n^{2/ \alpha}} C(b) \mu^{K,n}(\rho), 
\label{E:BU}\\ 
\frac{1}{ |U_1^{K,n,r} |} & \leq  C(b) \frac{n^{2/ \alpha}}{a^2 \mu^{K,n}(\rho)}. \label{E:BU-1}
\end{eqnarray}
With the definition of $\rho$, we can then check that  for any $p \geq 1$ (the constant depends on $p$ but not on $K$ and $n$)
$$
\E \left( \frac{1}{|\mu^{K,n}(\rho) |^p}\right) \leq C.
$$
The proof follows the same line as in \cite{CG18} section 4.2 equation (4.25) and we omit it.  Consequently $U_1^{K,n,r}$ is invertible and $(U_1^{K,n,r})^{-1}\in \cap_{p \geq 1} \mathbf{L}^p$. From Theorem \ref{T:Mal-Hell} it is now sufficient to bound $\E_x[\mathcal{H}_{Y_1^{K,n,r}}( \partial_r Y_1^{K,n,r})^2 ]$ where
$$
\mathcal{H}_{Y_1^{K,n,r}}( \partial_r Y_1^{K,n,r})  =\frac{\partial_r Y_1^{K,n,r}}{U_1^{K,n,r}} \frac{ W_1^{K,n,r,(3,1)}}{U_1^{K,n,r}} 
- 2 \partial_r Y_1^{K,n,r} \frac{\mathbb{L}_1^{K,n,r}}{U_1^{K,n,r}}
- \frac{W_1^{K,n,r,(2,1)}}{U_1^{K,n,r}}.
$$
We study the $L^2$-norm of each term. We first deduce from Gronwall's Lemma, 
\begin{equation} \label{E:B1}
\sup_{t \leq 1} | Y_t^{K,n,r} - \xi_t^n(x) | \leq a e^{ ||b'||_{\infty}/n} \frac{1}{n^{1/ \alpha}} \sup_{s \leq 1} |L_s^{K,n}| \leq C(a,b)\frac{1}{n^{1/ \alpha}} \sup_{s \leq 1} |L_s^{K,n}| .
\end{equation}
Combining this with \eqref{E:BZ}, \eqref{E:BU} and \eqref{E:BU-1}, we obtain the intermediate bounds
\begin{equation} \label{E:BderY}
| \partial_r Y_1^{K,n,r}| \leq \frac{C(a,b)}{n}\frac{1}{n^{1/ \alpha}} \sup_{t \in [0,1]} | L_t^{K,n} |, 
\end{equation}

\begin{equation*} \label{E:BL}
|\mathbb{L}_1^{K,n,r} | \leq \frac{C(a,b)}{n}  \frac{ \mu^{K,n}( \rho)}{  n^{2/ \alpha} } + \frac{C(a,b)}{n^{1/ \alpha}} \mu^{K,n}( |\rho' + \rho  \frac{F'_{K,n}}{F_{K,n}}  |),
\end{equation*}

\begin{equation*} \label{E:BW21}
| W_1^{K,n,r,(2,1)}| \leq \frac{C(a,b)}{n}  \frac{ \mu^{K,n}( \rho)}{  n^{2/ \alpha} } [ 1 + \frac{1}{n} \frac{\sup_{t \in [0,1]} | L_t^{K,n} |}{n^{1/ \alpha}} ],
\end{equation*}
\begin{equation*} \label{E:BW31}
|W_1^{K,n,r,(3,1)} | \leq  \frac{C(a,b)}{n}  \frac{ \mu^{K,n}( \rho)^2}{  n^{4/ \alpha} } + \frac{C(a,b)}{n^{3 / \alpha}}   \mu^{K,n}(| \rho'  \rho|).
\end{equation*}

With this background, we control each term in $\mathcal{H}_{Y_1^{K,n,r}}( \partial_r Y_1^{K,n,r})$


$$
| \frac{\partial_r Y_1^{K,n,r}}{U_1^{K,n,r}} \frac{ W_1^{K,n,r,(3,1)}}{U_1^{K,n,r}}  | \leq \frac{C(a,b)}{n} \left(\frac{\sup_{t \in [0,1]} | L_t^{K,n} |}{ n^{1+1/ \alpha}} + 
 \frac{ \sup_{t \in [0,1]} | L_t^{K,n} | \mu^{K,n}(| \rho' \rho |)}{ \mu^{K,n}(\rho)^2}\right),
$$

$$
| \partial_r Y_1^{K,n,r} \frac{\mathbb{L}_1^{K,n,r}}{U_1^{K,n,r}} | \leq  \frac{C(a,b)}{n} \left( \frac{\sup_{t \in [0,1]} | L_t^{K,n} |}{ n^{1+1/ \alpha}} + 
 \frac{ \sup_{t \in [0,1]} | L_t^{K,n} | \mu^{K,n}(| \rho' + \rho  \frac{F'_{K,n}}{F_{K,n}}  |)}{ \mu^{K,n}(\rho)} \right),
$$

$$
| \frac{W_1^{K,n,r,(2,1)}}{U_1^{K,n,r}} | \leq \frac{C(a,b)}{n} \left(1+ \frac{\sup_{t \in [0,1]} | L_t^{K,n} |}{ n^{1+1/ \alpha}}\right).
$$
This permits to deduce that
$$
| \mathcal{H}_{Y_1^{K,n,r}}( \partial_r Y_1^{K,n,r})| \leq \frac{C(a,b)}{n} (1+ \frac{1}{n} T_1 + T_2 +T_3),
$$
with
$$
T_1=\frac{\sup_{t \in [0,1]} | L_t^{K,n} |}{ n^{1/ \alpha}}, \quad
T_2= \frac{ \sup_{t \in [0,1]} | L_t^{K,n} | \mu^{K,n}(| \rho' \rho |)}{ \mu^{K,n}(\rho)^2}, 
$$
$$
 T_3=\frac{ \sup_{t \in [0,1]} | L_t^{K,n} | \mu^{K,n}(| \rho' + \rho  \frac{F'_{K,n}}{F_{K,n}}  |)}{ \mu^{K,n}(\rho)}.
$$
We first  study  the $L^2$-norm of $T_1$. 
Since $L_t^{K,n}= \int_0^t \int_{\R} z \tilde{\mu}^{K,n} (ds,dz)$, we have immediately using the definition of the compensator \eqref{E:Compens}
$$ 
\E \left| \frac{\sup_{t \in [0,1]} | L_t^{K,n} |}{ n^{1/ \alpha}}\right|^2 \leq \frac{C}{n^{2/ \alpha}} \int_0^{K n^{1/\alpha}}z^2 g(\frac{z}{n^{1/ \alpha}})\frac{1}{|z|^{\alpha+1} }dz.
$$
Since $g$ is bounded, we deduce after some calculus
\begin{equation} \label{E:T1G}
\E T_1^2=\E \left| \frac{\sup_{t \in [0,1]} | L_t^{K,n} |}{ n^{1/ \alpha}}\right|^2 \leq C(\alpha) K^{2-\alpha} /n.
\end{equation}
Now if $g$ satisfies the additional assumption $\int_{\R} |z| g(z) dz < \infty$, then
\begin{equation} \label{E:T1add}
\E T_1^2=\E \left| \frac{\sup_{t \in [0,1]} | L_t^{K,n} |}{ n^{1/ \alpha}}\right|^2 \leq C(\alpha)/n,
\end{equation}
with $C(\alpha)$ independent of $K$.

Turning to $T_2$, we decompose $L_t^{K,n}$ (using the symmetry of $F_{K,n}$)  into the small jump part and the large jump part as
$$
L_t^{K,n} = \int_0^t \int_{\{0< |z| <1\}} z \tilde{\mu}^{K,n}(ds,dz) + \int_0^t \int_{|z| \geq 1\}} z \mu^{K,n}(ds,dz).
$$
Since the small jump part is bounded in $L^p$, for any $p \geq 1$, by a constant independent of $K$, we focus on the large jump part and study the worst term  in $T_2$
$$
\frac{ \int_0^1 \int_{\R} |z| 1_{\{ |z| \geq 1\} } \mu^{K,n}(ds,dz) \mu^{K,n}(| \rho' \rho | 1_{\{ |z| \geq 1\} })}{ \mu^{K,n}(\rho 1_{\{ |z| \geq 1\} })^2}.
$$
Proceeding as in the proof of Lemma 4.3 in \cite{CG18}, we deduce that
$$
\frac{ \int_0^1 \int_{\R} |z| 1_{\{ |z| \geq 1\} } \mu^{K,n}(ds,dz) \mu^{K,n}(| \rho' \rho | 1_{\{ |z| \geq 1\} })}{ \mu^{K,n}(\rho 1_{\{ |z| \geq 1\} })^2} \leq C  \mu^{K,n}( \{ |z| \geq 1\})^{1/2}.
$$
Then observing that $ \mu^{K,n}( \{ |z| \geq 1\})$ has a Poisson distribution with parameter $\lambda_{K,n} \leq C(\alpha)$, we obtain
$$
\E T_2^2 \leq C(\alpha).
$$ 
For the last term $T_3$, the definition of $F_{K,n}$ gives for $z \neq 0$
$$
| \rho (z)\frac{F'_{K,n}(z)}{F_{K,n}(z)}| \leq C(\frac{\rho(z)}{|z|} + \frac{\rho(z)}{n^{1/ \alpha}} | \frac{g'}{g}( \frac{z}{n^{1/ \alpha}})| + \frac{\rho(z)}{n^{1/ \alpha}} | \frac{\tau_K'}{\tau_K}( \frac{z}{n^{1/ \alpha}}) |).
$$
Consequently $T_3$ can be split into three terms, $T_3 \leq T_{3,1}+T_{3,2}+T_{3,3}$ with
$$
T_{3,1}= \frac{\sup_{t \in [0,1]} | L_t^{K,n} | \mu^{K,n}(| \rho' | + |\rho/z |)}{ \mu^{K,n}(\rho)},
$$
$$
T_{3,2}=\frac{1}{n^{1/ \alpha}} \frac{ \sup_{t \in [0,1]} | L_t^{K,n} | \mu^{K,n}( \rho | \frac{g'}{g}( \frac{z}{n^{1/ \alpha}})|)}{ \mu^{K,n}(\rho)},
$$
$$
T_{3,3}=\frac{1}{n^{1/ \alpha}} \frac{\sup_{t \in [0,1]} | L_t^{K,n} | \mu^{K,n}( \rho | \frac{\tau_K'}{\tau_K}( \frac{z}{n^{1/ \alpha}})|)}{ \mu^{K,n}(\rho)}.
$$
For $T_{3,1}$, we obtain by distinguishing between the small jump part and the large jump part (as for $T_2$) 
$$
\E(T_{3,1})^2 \leq C(\alpha).
$$
Since $g'/g$ is bounded, we deduce for $T_{3,2}$
$$
\E(T_{3,2})^2 \leq C \E \left| \frac{\sup_{t \in [0,1]} | L_t^{K,n} |}{ n^{1/ \alpha}}\right|^2,
$$
and we conclude using \eqref{E:T1G} or \eqref{E:T1add}. Remark that $T_{3,2}=0$ in the stable case $g=c_0$.

Finally, considering $T_{3,3}$, we first remark that by definition of $\tau_K$
$$
T_{3,3} \leq \frac{1}{n^{1/ \alpha}} \sup_{t \in [0,1]} | L_t^{K,n} | \mu^{K,n}\left( 1_{\{K n^{1/ \alpha}/2 \leq |z| \leq K n^{1/ \alpha}  \}} |\frac{\tau_K'}{\tau_K}( \frac{z}{n^{1/ \alpha}})|\right).
$$
From Burkholder inequality (see Lemma 2.5, inequality 2.1.37 in \cite{JP}),
$$
\E \left| \frac{\sup_{t \in [0,1]} | L_t^{K,n} |}{ n^{1/ \alpha}}\right|^4 \leq C(\alpha) \frac{K^{4- \alpha}}{n},
$$
and using a change of variables and assumption \eqref{E:tau}
$$
\E \mu^{K,n}\left( 1_{\{K n^{1/ \alpha}/2 \leq |z| \leq K n^{1/ \alpha}  \}} |\frac{\tau_K'}{\tau_K}( \frac{z}{n^{1/ \alpha}})|\right)^4 \leq \frac{C(\alpha)}{n K^{4+ \alpha}}. 
$$
This permits to deduce from Cauchy Schwarz inequality
$$
\E(T_{3,3})^2 \leq C(\alpha) \frac{1}{n K^{\alpha}} \leq C(\alpha)/n.
$$

To summarize, we have established (and the worst term comes from $T_{3,2}$) 
$$
\E_x |\mathcal{H}_{Y_1^{K,n,r}}( \partial_r Y_1^{K,n,r})|^2 \leq \frac{C(a,b,\alpha)}{n^2} (1 + \frac{K^{2- \alpha}}{n}),
$$
and if we have additionally $\int_{\R} |z| g(z) dz < \infty$, then 
$$
\E_x |\mathcal{H}_{Y_1^{K,n,r}}( \partial_r Y_1^{K,n,r})|^2 \leq \frac{C(a,b, \alpha)}{n^2}.
$$
In the stable case, $T_{3,2}=0$ and the worst term is $T_1/n$
$$
\E_x |\mathcal{H}_{Y_1^{K,n,r}}( \partial_r Y_1^{K,n,r})|^2 \leq \frac{C(a,b,\alpha)}{n^2} (1 + \frac{K^{2- \alpha}}{n^3}).
$$
To simplify the presentation, we have not expressed explicitly the dependence of $C(a,b, \alpha)$ in $a$, $\alpha$ and the derivatives of $b$, but it is not difficult to check that we have
$$
C(a,b, \alpha) \leq Ce^{C ||b'||_{\infty}} ( ||b''||_{\infty}^{p_1} + a^{p_2} + \frac{1}{a^{p_3}} + \frac{1}{ \alpha^{p_4}} + \frac{1}{(2- \alpha)^{p_5}}),
$$
with $p_i \geq 1$, for $1 \leq i \leq 5$.

The proof of Theorem \ref{T:BHellHR0} (i) is finished.


\medskip

{\bf Part (ii)}

The proof follows the same lines as the one of part (i) and we only indicate the main changes observing that  \eqref{E:YHR0tilde} is obtained replacing $b(\xi_s^n(x))$  in \eqref{E:Y} by $b(x)$. 
We first deduce from Gronwall's Lemma, 
\begin{equation} \label{E:B1Euler}
\sup_{t \leq 1} | \tilde{Y}_t^{K,n,r} - x | \leq  C(a,b)( \frac{ |b(x)|}{n} +\frac{1}{n^{1/ \alpha}} \sup_{s \leq 1} |L_s^{K,n}| ).
\end{equation}
This yields
\begin{equation*} \label{E:BderYEuler}
| \partial_r \tilde{Y}_1^{K,n,r}| \leq \frac{C(a,b)}{n}( \frac{ |b(x)|}{n} +\frac{1}{n^{1/ \alpha}} \sup_{t \in [0,1]} | L_t^{K,n} |).
 \end{equation*}
 Consequently, comparing to \eqref{E:BderY}, we have the additional term $\frac{ |b(x)|}{n^2}$, so we deduce the bound  
 \begin{eqnarray*}
| \mathcal{H}_{\tilde{Y}_1^{K,n,r}}( \partial_r \tilde{Y}_1^{K,n,r})| & \leq   & \frac{C(a,b)}{n} \left(1+ \frac{1}{n} T_1 + T_2 +T_3 \right. \\
 & & \left. + \frac{|b(x)|}{n^2} + |b(x)|\frac{n^{1/ \alpha}}{n} [\frac{\mu^{K,n}(| \rho' \rho |)}{ \mu^{K,n}(\rho)^2} +
 \frac{ \mu^{K,n}(| \rho' + \rho  \frac{F'_{K,n}}{F_{K,n}}  |)}{ \mu^{K,n}(\rho)} ]\right).
 \end{eqnarray*}
We show easily that $\frac{\mu^{K,n}(| \rho' \rho |)}{ \mu^{K,n}(\rho)^2}$ and 
 $ \frac{ \mu^{K,n}(| \rho' + \rho  \frac{F'_{K,n}}{F_{K,n}}  |)}{ \mu^{K,n}(\rho)}$ are bounded in $L^2$ and  with the previous study of the terms $T_1, T_2, T_3$
we obtain the result of Theorem  \ref{T:BHellHR0} (ii).


\subsection{Proof of Theorem \ref{T:BHellG} ($a$ non constant and {\bf A1})} \label{S:ProofG}

Since $g$ is compactly supported,  $X_{1/n}$ and $\overline{X}_{1/n}$ have moments of all order and the additional truncation $\tau_{K}$ is useless ($g$ is a truncation). So from now on, the interpolation $Y^{n,r}$ and the Malliavin operators do not depend on $K$.

To solve equations \eqref{E:derY}, \eqref{E:EDSU}, \eqref{E:EDSL}, \eqref{E:EDSW21}, \eqref{E:EDSW31} (defining  $\partial_r Y_1^{n,r}$, $U_1^{n,r}$,  $\mathbb{L}_1^{n,r}$, $W_1^{n,r,(2,1)}$, $W_1^{n,r,(3,1)}$), we introduce $(Z_t^{n,r})$ that solves the linear equation
\begin{eqnarray}\label{E:Z}
Z_t^{n,r}=1 +  \frac{1}{n} \int_0^t rb'(Y_s^{n,r}) Z_s^{n,r}ds + \frac{1}{n^{1/\alpha}} \int_0^t  \int_{\R} ra'(Y_{s-}^{n,r})Z_{s-}^{n,r}z  \tilde{\mu}^n(ds,dz).
\end{eqnarray}
Under {\bf A1}, $Z_t^{n,r}$ is invertible and from It\^{o}'s formula, we check that 
\begin{eqnarray} \label{E:DY}
\quad \quad \partial_r Y_t^{n,r}  & =  & Z_t^{n,r}\int_0^t  (Z_{s-}^{n,r})^{-1} \frac{1}{n }(b(Y_s^{n,r})-b(\xi_s^{n}(x) )) ds \\
  & & +\frac{Z_t^{n,r}}{n^{1/ \alpha}}\int_0^t \int_{\R} (Z_{s-}^{n,r})^{-1}  ( a( Y_{s-}^{n,r})-a(x)) z  \tilde{\mu}^n(ds,dz) 
  \nonumber\\
 & &   - \frac{Z_t^{n,r}}{n^{1/ \alpha}}\int_0^t \int_{\R} (Z_{s-}^{n,r})^{-1} \left( \frac{  ( a(Y_{s-}^{n,r})-a(x)) }{ 1+ \frac{ ra'( Y_{s-}^{n,r}) z}{n^{1/ \alpha}} } \right) 
  \frac{ ra'(Y_{s-}^{n,r}) z^2}{n^{1/ \alpha}} \mu^n(ds,dz), \nonumber
\end{eqnarray}
\begin{eqnarray}\label{E:U}
U_t^{n,r}=\frac{(Z_t^{n,r})^2}{n^{2/ \alpha}} \int_0^t \int_{\R} (Z_{s-}^{n,r})^{-2} 
\left( \frac{ a(r, Y_{s-}^{n,r}) }{1+ \frac{ ra'( Y_{s-}^{n,r}) z}{n^{1/ \alpha}} }\right)^2 \rho(z) \mu^n(ds,dz),
\end{eqnarray}

\begin{eqnarray} \label{E:L}
 \mathbb{L}_t^{n,r}  & =  & \frac{Z_t^{n,r}}{2n} \int_0^t (Z_{s-}^{n,r})^{-1}rb''(Y_s^{n,r})U_{s-}^{n,r} ds \nonumber\\
& & +
 \frac{Z_t^{n,r}}{2n^{1/ \alpha}} \int_0^t \int_{\R} (Z_{s-}^{n,r})^{-1}
 \left[ \left( \frac{ a(r, Y_{s-}^{n,r}) }{1+ \frac{ ra'( Y_{s-}^{n,r}) z}{n^{1/ \alpha}} }\right)(\rho'(z) +  \rho(z) \frac{ F'_n(z)}{F_n(z)}) \mu^n(ds,dz)  \right. \\
 & & \left.  + ra''(Y_{s-}^{n,r})  U_{s-}^{n,r} z  \tilde{\mu}^n(ds,dz) 
  - \left( \frac{ ra''( Y_{s-}^{n,r})  U_{s-}^{n,r}  }{ 1+ \frac{ ra'( Y_{s-}^{n,r})z }{n^{1/ \alpha}} }\right)\frac{ ra'( Y_{s-}^{n,r}) z^2}{n^{1/ \alpha}} \mu^n(ds,dz) \right]. \nonumber
\end{eqnarray}
Since  equations  \eqref{E:EDSW21} and \eqref{E:EDSW31} are more complicated,  we just explicit the structure of the solution for $W_1^{n,r,(2,1)}$ and $W_1^{n,r,(3,1)}$, where $P^{n,0}$, $P^{n,1}$, $P^{n,2}$ are obtained from \eqref{E:EDSW21}  and  \eqref{E:EDSW31} respectively.
\begin{eqnarray} \label{E:W21}
 W_t^{n,r,(2,1)} & = & (Z_t^{n,r})^2 \int_0^t (Z_s^{n,r})^{-2} \left( P_s^{n,0} ds + 
\int_{\R} \frac{ P_{s-}^{n,1}(z)}{(1+ \frac{ra'(Y_{s-}^{n,r}) z}{n^{1/ \alpha}})^2} \mu^n(ds,dz) \right. \nonumber\\
  & & \left. + \int_{\R} P_{s-}^{n,2}(z) \tilde{\mu}^n(ds,dz) -  \int_{\R} P_{s-}^{n,2}(z)[1- \frac{1}{(1+ \frac{ra'(Y_{s-}^{n,r}) z}{n^{1/ \alpha}})^2} ]\mu^n(ds,dz) \right),  
\end{eqnarray}
\begin{eqnarray} \label{E:W31}
 W_t^{n,r,(3,1)} & = & (Z_t^{n,r})^3 \int_0^t (Z_s^{n,r})^{-3} \left( P_s^{n,0} ds + 
\int_{\R} \frac{ P_{s-}^{n,1}(z)}{(1+ \frac{ra'(Y_{s-}^{n,r}) z}{n^{1/ \alpha}})^3} \mu^n(ds,dz) \right.\nonumber \\
 & & \left. + \int_{\R} P_{s-}^{n,2}(z) \tilde{\mu}^n(ds,dz) -  \int_{\R} P_{s-}^{n,2}(z)[1- \frac{1}{(1+ \frac{ra'(Y_{s-}^{n,r}) z}{n^{1/ \alpha}})^3} ]\mu^n(ds,dz) \right).   
\end{eqnarray}
To identify the rate of convergence in the previous expressions and to simplify the study, we introduce some integrable processes $(P_t)_{t \in [0,1]}$ (we omit the dependence on $n$), whose expressions change  from line to line, but such that
$$
\forall n \geq 1, \; \forall r \in [0,1], \;  \quad \mathbb{E}_x \sup_{s \in [0,1]}| P_s |^p \leq C(a,b, \alpha) (1 + |x|^p), \quad \forall p \geq1.
$$
The constant $C(a,b, \alpha)$ is independent of $n$, $r$ and $x$ but depend on $p$. To avoid heavy notation, we omit the dependence on $p$, except in Lemma \ref{L:techni} below. 
We also use the notation 
\begin{equation} \label{E:Mart}
M_t= \int_0^t P_{s-} d L_s^n, \quad  R_t=\int_0^t \int_{\R} |z| 1_{\{ |z | > 1\}} \mu^n(ds,dz), \quad t \in [0,1].
\end{equation}
From Burkholder inequality,
$$
\E_x  \frac{ \sup_{t \in [0,1] } | M_t |^p}{n^{p / \alpha}} \leq C(a,b, \alpha) (1 + |x|^p),
$$
that is  $M_t/ n^{1/ \alpha}= P_t$. Moreover using $|z|/ n^{1/ \alpha} \leq 1/(2 || a' ||_{\infty})$, we also have $R_t/ n^{1/ \alpha}= P_t$. In the following, we distinguish between the small jump part and the large jump part of $M_t$ 
$$
M_t^{SJ}= \int_0^t \int_{\R} P_{s-} z 1_{\{ |z |  \leq 1\}}  \tilde{\mu}^n(ds,dz), \quad M_t^{LJ}= \int_0^t \int_{\R} P_{s-} z 1_{\{ |z | > 1\}} \mu^n(ds,dz),
$$
where we used the symmetry of the compensator for the second expression. We check that $M_t^{SJ}= P_t$ and that 
$|M_t^{LJ} | \leq P_t R_t$. 

We now give some relatively simple expressions or bounds for the variables $\partial_r Y_1^{n,r}$, $U_1^{n,r}$,  $\mathbb{L}_1^{n,r}$, $W_1^{n,r,(2,1)}$, $W_1^{n,r,(3,1)}$.
We first remark that from {\bf A1}, $\mu^{n}$ has support in $\{ |z| \leq n^{1/ \alpha} \frac{1}{2 || a' ||_{\infty}} \}$ and we have for any $y$ and any $z$ such that $|z| \leq n^{1/ \alpha} \frac{1}{2 || a' ||_{\infty}}$ 
$$
\frac{2}{3} \leq  \frac{1}{ | 1+ ra'(y) \frac{z}{n^{1/ \alpha} }|} \leq 2.
$$
Moreover standard arguments give $Z_t^{n,r}=P_t$ and $(Z_t^{n,r})^{-1}=P_t$.
This permits to deduce
\begin{equation} \label{E:BGU}
\forall t \in [0,1], \quad 0 \leq U_t^{n,r} \leq P_t \frac{\mu^n(\rho)}{n^{2/ \alpha}},
\end{equation}
\begin{equation} \label{E:BGUI}
0 \leq \frac{1}{U_1^{n,r}} \leq P_1 \frac{n^{2/ \alpha}}{\underline{a}^2\mu^n(\rho)}.
\end{equation}
So as in Section \ref{S:ProofC}, we  check that $1/ U_1^{n,r} \in \cap_{p \geq 1} \mathbf{L}^p$. 
We also observe that
\begin{equation} \label{E:BYx}
\forall t \in [0,1], \quad Y_t^{n,r} - x  = \frac{P_t}{n} + \frac{M_t}{n^{1/ \alpha}},
\end{equation}
and from Gronwall's inequality, we have
\begin{equation} \label{E:BYxsi}
\forall t \in [0,1], \quad |Y_t^{n,r} - \xi_t^{n}(x) | \leq C(b) \frac{\sup_{t \in [0,1]} |M_t| }{ n^{1/ \alpha}}.
\end{equation}
The next lemma summarizes our results, having in mind that we want to identify the rate of convergence of 
$ \partial_r Y_1^{n,r}W_1^{n,r,(3,1)}/(U_1^{n,r})^2$, $  \partial_r Y_1^{n,r} \mathbb{L}_1^{n,r}/ U_1^{n,r}$ and
$  W_1^{n,r,(2,1)}/ U_1^{n,r}$, where $U_1^{n,r}$ is approximately $\mu^n(\rho)/n^{2/ \alpha}$. 

\begin{lem} \label{L:Bounds}
With $R_t=\int_0^t \int_{\R} |z| 1_{\{ |z | > 1\}} \mu^n(ds,dz)$, we have the bounds
\begin{enumerate}
\item
\begin{eqnarray*} 
\sup_{t \in [0,1] }  |\partial_r Y_t^{n,r}|   & \leq  & \frac{P_1}{n^{1+1/ \alpha}}\left( 1 +R_1\right) \\
& & + \frac{P_1}{n^{2/ \alpha}}\left(1+R_1 +\int_0^1 \int_{\R} R_{s-} |z| 1_{\{ |z| > 1\}} \mu^n(ds,dz)  \right), \nonumber
\end{eqnarray*}
\item
\begin{eqnarray*} 
 |\mathbb{L}_1^{n,r} |  \leq \frac{P_1}{n^{1+2/ \alpha}} \mu^n(\rho) + \frac{P_1}{n^{2/ \alpha}} (1+\mu^n(\rho))
+ \frac{P_1}{n^{1/ \alpha}}  \mu^n(|\rho' + \rho \frac{F'_n}{F_n}|),
\end{eqnarray*}
\item
\begin{eqnarray*}
 |W_1^{n,r,(2,1)}|  \leq \frac{P_1}{n^{1+2/ \alpha}} \mu^n(\rho) +  \frac{P_1}{n^{2/ \alpha}} \mu^n(\rho) \sup_t |\partial_r Y_{t}^{n,r}  |
 \hspace{2cm}\\
+ \frac{P_1}{n^{3/ \alpha}}[ \mu^n(\rho) + R_1+ R_1\int_0^1 \int_{\R} R_{s-} |z | 1_{\{ |z| > 1\}} \mu^n(ds,dz)],
\end{eqnarray*}
\item
\begin{eqnarray*} 
 |W_1^{n,r,(3,1)}|  \leq  \frac{P_1}{n^{1+4/ \alpha}} \mu^n(\rho)^2  +  \frac{P_1}{n^{4/ \alpha}}(1+ \mu^n(\rho)^2 ) + \frac{P_1}{n^{3/ \alpha}} \mu^n(|\rho' \rho|).
\end{eqnarray*}
\end{enumerate}
\end{lem}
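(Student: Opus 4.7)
\textbf{Proof proposal for Lemma \ref{L:Bounds}.}

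The plan is to exploit the explicit It\^o representations \eqref{E:DY}--\eqref{E:W31} already derived, together with three structural facts that hold under \textbf{A1} and \textbf{HR}. First, since $\mu^n$ is supported in $\{|z|\le n^{1/\alpha}/(2\|a'\|_\infty)\}$, for any $r\in[0,1]$ and any $y$ we have uniformly $2/3 \le |1+ra'(y)z/n^{1/\alpha}|^{-1}\le 2$, so every denominator appearing in \eqref{E:DY}--\eqref{E:W31} can be treated as a bounded factor and absorbed into a $P$-process. Second, $Z^{n,r}$ and $(Z^{n,r})^{-1}$ satisfy $\sup_{t\le 1}(|Z_t^{n,r}|^p+|(Z_t^{n,r})^{-1}|^p) = P_1$ by standard Gronwall/Dol\'eans estimates applied to \eqref{E:Z} (the coefficient $ra'$ is bounded and $1+ra'(Y)z/n^{1/\alpha}$ stays bounded away from $0$), so products of the form $Z^{n,r}(Z^{n,r})^{-1}$ are absorbed in $P_1$. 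Third, by the Lipschitz property of $a$ and $b$ combined with \eqref{E:BYx} and \eqref{E:BYxsi}, we have $|b(Y_s^{n,r})-b(\xi_s^n(x))| \le \|b'\|_\infty C(b)\sup_s|M_s|/n^{1/\alpha}$ and $|a(Y_{s-}^{n,r})-a(x)|\le \|a'\|_\infty(P_s/n+|M_s|/n^{1/\alpha})$.

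For the first bound, I plug the estimates above into \eqref{E:DY}: the drift term yields $\frac{P_1}{n}\cdot\frac{\sup_s|M_s|}{n^{1/\alpha}}$, which after splitting $M=M^{SJ}+M^{LJ}$ with $|M^{SJ}|=P_t$ and $|M^{LJ}|\le P_t R_t$ gives $\frac{P_1}{n^{1+1/\alpha}}(1+R_1)$. The compensated stochastic integral is split into $|z|\le 1$ and $|z|>1$: the small-jump part is controlled in $L^p$ by Burkholder's inequality (Lemma~2.5 inequality~2.1.37 in \cite{JP}) using $|a(Y_{s-})-a(x)|\cdot|z|\le \|a'\|_\infty\bigl(\tfrac{1}{n}+\tfrac{|M_{s-}|}{n^{1/\alpha}}\bigr)$, while the large-jump part is bounded pointwise by $\frac{1}{n^{1/\alpha}}P_1(1+R_1)$ times $R_1$. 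The last compensator-free integral in \eqref{E:DY} carries an extra $1/n^{1/\alpha}$ factor from the $ra'z/n^{1/\alpha}$ prefactor and produces the term $\frac{P_1}{n^{2/\alpha}}\bigl(1+R_1+\int_0^1\int R_{s-}|z|1_{\{|z|>1\}}\mu^n(ds,dz)\bigr)$. Collecting these yields (1).

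For bounds (2)--(4), the same scheme applies mechanically to \eqref{E:L}, \eqref{E:W21}, \eqref{E:W31}. Each pure-drift $ds$ integral containing $U_{s-}^{n,r}$ is bounded using \eqref{E:BGU} by $\frac{P_1}{n}\cdot\frac{\mu^n(\rho)}{n^{2/\alpha}}$, producing the $\frac{P_1}{n^{1+2/\alpha}}\mu^n(\rho)$ and $\frac{P_1}{n^{1+4/\alpha}}\mu^n(\rho)^2$ terms. Each compensated integral containing $U_{s-}^{n,r}z$ is split into small- and large-jump parts (small part via BDG as $P_1/n^{2/\alpha}$ times $\mu^n(\rho)$; large part via $R_1$), producing the $\frac{P_1}{n^{2/\alpha}}(1+\mu^n(\rho))$ and the $\frac{P_1}{n^{3/\alpha}}(\mu^n(\rho)+R_1+\cdots)$ terms. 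The uncompensated integrals involving $\rho'$, $\rho\,F_n'/F_n$, or $\rho'\rho$ remain as $\mu^n$ integrals in the bound. For $W_1^{n,r,(2,1)}$ the term with $\partial_r Y_{s-}^{n,r}$ is kept explicit as $\frac{P_1}{n^{2/\alpha}}\mu^n(\rho)\sup_t|\partial_r Y_t^{n,r}|$, since it will be substituted by (1) at the next step of the proof of Theorem~\ref{T:BHellG}.

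The main obstacle is not algebraic difficulty but bookkeeping: one must carefully distinguish, for each stochastic integral, the compensator-free part (bounded pointwise through $R_t$ and the large-jump support) from the compensated martingale part (bounded in $L^p$ via BDG), and ensure that every factor of $1/(1+ra'z/n^{1/\alpha})^k$ is genuinely bounded under \textbf{A1}. The Lipschitz expansion of $a(Y_{s-})-a(x)$ is what produces the two distinct prefactors $1/n^{1+1/\alpha}$ and $1/n^{2/\alpha}$ in (1), and this splitting propagates into (3) through the $W^{(2,1)}$ equation.
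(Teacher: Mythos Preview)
Your bookkeeping is right in spirit, but there is a real gap in how you handle the compensated integrals, and it loses exactly one factor of $n^{1/\alpha}$ --- which is the whole point of the lemma.

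The bounds in Lemma~\ref{L:Bounds} are \emph{pointwise} inequalities of the form $|X|\le P_1\,Y$, where $Y$ is a random quantity such as $\mu^n(\rho)$ or $R_1$. This matters because immediately afterwards (see \eqref{E:P1}--\eqref{E:P3}) the bounds are divided by $U_1^{n,r}\sim \mu^n(\rho)/n^{2/\alpha}$, so one needs $|X|/Y$ to have bounded moments, not merely $\E|X|^p\le C\,\E Y^p$. Burkholder's inequality gives only the latter. Concretely, in part~(1) the second term of \eqref{E:DY} leads, after substituting $a(Y_{s-})-a(x)=P_{s-}(P_{s-}/n+M_{s-}/n^{1/\alpha})$, to $\tfrac{1}{n^{2/\alpha}}\int_0^t M_{s-}\,dM_s$. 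Splitting only the integrator into $|z|\le 1$ and $|z|>1$ and applying BDG to the small-jump piece yields, since $|M_{s-}|/n^{1/\alpha}=P_{s-}$, nothing better than $P_1/n^{1/\alpha}$. That is one order of $n^{1/\alpha}$ short of the stated bound $P_1/n^{2/\alpha}(1+R_1+\cdots)$, and when multiplied by $n^{1/\alpha}$ in the estimate of $n^{1/\alpha}|\partial_r Y|\,\mu^n(|\rho'\rho|)/\mu^n(\rho)^2$ it produces $O(1)$ instead of $O(1/n^{1/\alpha})$, destroying the rate in Theorem~\ref{T:BHellG}.

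The missing device is to split \emph{both} the integrand $M_{s-}$ and the integrator $dM_s$ into small- and large-jump parts, giving four terms $I^1,\dots,I^4$. Three of them are immediate; the delicate one is $I^4=\int_0^t M^{LJ}_{s-}\,dM^{SJ}_s$, which BDG cannot bound pointwise by $P_t R_t$ because $M^{LJ}_{s-}$ is not uniformly $L^p$-bounded. The paper uses It\^o's product formula: since $[M^{SJ},M^{LJ}]=0$, one has $I^4= M^{LJ}_t M^{SJ}_t-\int_0^t M^{SJ}_{s-}\,dM^{LJ}_s$, and both right-hand terms are now pointwise $\le P_t R_t$. The same integration-by-parts trick is needed in parts~(2)--(4) for the integrals $\int Q_{s-}\,dM_s$ and $\int Q_{s-}^2\,dM_s$ (with $Q$ the process behind $U$); your description ``small part via BDG as $P_1/n^{2/\alpha}$ times $\mu^n(\rho)$'' implicitly assumes a pointwise bound that BDG does not provide.
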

 
 \begin{proof}
1. Using equation \eqref{E:DY} with \eqref{E:BYx} and \eqref{E:BYxsi}, $\forall t \in [0,1]$
\begin{eqnarray*}
 |\partial_r Y_t^{n,r}|   \leq  \frac{P_t}{n} \frac{ \sup_t | M_t|}{n^{1/ \alpha}} + \frac{P_t}{n} \frac{1}{n^{1/ \alpha}} \int_0^t \int_{\R} \frac{z^2}{n^{1/ \alpha}} \mu^n(ds,dz) \hspace{2cm}\\
+  \frac{P_t}{n^{2/ \alpha}} | \int_0^t \int_{\R} P_{s-} M_{s-} z \tilde{\mu}^n(ds,dz) |+
 \frac{P_t}{n^{2/ \alpha}} | \int_0^t \int_{\R} P_{s-} M_{s-} \frac{z^2}{n^{1/ \alpha}} \mu^n(ds,dz)|. 
\end{eqnarray*}
In this expression to identify a sharp rate of convergence,  we distinguish between the small jumps and the large jumps for each integral. Remarking that $|z| / n^{1/ \alpha}$ is bounded,  the first two terms on the right-hand side of the inequality are bounded by
$$
\frac{P_t}{n^{1+1/ \alpha}}( 1 +R_1).
$$
Moreover the last term satisfies
$$
\frac{P_t}{n^{2/ \alpha}} | \int_0^t \int_{\R} P_{s-} M_{s-} \frac{z^2}{n^{1/ \alpha}} \mu^n(ds,dz)| \leq \frac{P_t}{n^{2/ \alpha}} 
(1+ R_1 +\int_0^t \int_{\R} R_{s-} |z|  1_{ \{ | z| > 1 \}}\mu^n(ds,dz) ).
$$
Considering $\int_0^t \int_{\R} P_{s-} M_{s-} z \tilde{\mu}^n(ds,dz)=\int_0^t M_{s-} dM_s$, we split into four integrals (small jumps and large jumps of $M$)
$$
\int_0^t \int_{\R} P_{s-} M_{s-} z \tilde{\mu}^n(ds,dz)= I^1_t + I^2_t + I^3_t +I^4_t
$$
with
$
I^1_t= \int_0^t M_{s-}^{SJ} dM_s^{SJ}= P_t,$
$$
|I^2_t|  =|  \int_0^t M_{s-}^{LJ} dM_s^{LJ}|   \leq P_t \int_0^t \int_{\R}R_{s-} |z| 1_{\{ |z| > 1\}} \mu^n(ds,dz),
$$
$$
 |I^3_t|  =  |\int_0^t M_{s-}^{SJ} dM_s^{LJ}|\leq P_t R_t, \quad  I^4_t   =  \int_0^t M_{s-}^{LJ} dM_s^{SJ}.
$$
 For $I^4$, observing that $[M^{SJ}, M^{LJ}]_t=0$, we deduce from  It\^{o}'s formula that  $\int_0^t M^{LJ}_{s-} d M_s^{SJ}=  M^{LJ}_{t}  M^{SJ}_{t} -\int_0^t M^{SJ}_{s-} d M_s^{LJ}$ and then
$
 | I^4_t|  \leq  P_t R_t.
$
 Putting together these inequalities, we finally deduce the first result.

2. On a similar way, using equation  \eqref{E:L} we obtain
\begin{eqnarray*} 
 |\mathbb{L}_1^{n,r} |  \leq \frac{P_1}{n^{1+2/ \alpha}} \mu^n(\rho) + \frac{P_1}{n^{1/ \alpha}}  \mu^n(|\rho' + \rho \frac{F'_n}{F_n}|) \hspace{4cm}\\
 + \frac{P_1}{n^{1/ \alpha}}\left(| \int_0^1 \int_{\R}P_{s-} U_{s-} z \tilde{\mu}^n(ds,dz) |+ |\int_0^1 \int_{\R} P_{s-} U_{s-}\frac{z^2}{n^{1/ \alpha}} \mu^n(ds,dz)|\right).
\end{eqnarray*}
We check easily
$$
 \frac{P_1}{n^{1/ \alpha}}|\int_0^1 \int_{\R} P_{s-} U_{s-}\frac{z^2}{n^{1/ \alpha}} \mu^n(ds,dz)| \leq  \frac{P_1}{n^{2/ \alpha}} \mu^n(\rho).
$$ 
To bound  $| \int_0^1 \int_{\R}P_{s-} U_{s-} z \tilde{\mu}^n(ds,dz) |$, we introduce the process $Q_t= \int_0^t P_{s-} \rho(z) \mu^n(ds,dz)$ and its decomposition 
$$
Q^{SJ}_t= \int_0^t P_{s-} \rho(z) 1_{\{ |z| \leq 1\} } \mu^n(ds,dz)=P_t,
$$
$$
|Q^{LJ}_t|= |\int_0^t P_{s-} \rho(z) 1_{\{ |z| > 1\} } \mu^n(ds,dz)| \leq P_t \mu^n(\rho).
$$
So we have $U_t= \frac{P_t }{n^{2/ \alpha}}Q_t$ and
$\int_0^1 \int_{\R}P_{s-} U_{s-} z \tilde{\mu}^n(ds,dz)=\frac{1}{n^{2/ \alpha}}\int_0^t Q_{s-} d M_s.
$
We conclude by splitting  $\int_0^t Q_{s-} d M_s$ into the small jumps and large jumps of $Q$ and $M$, with  It\^{o}'s formula for  
$\int_0^t Q^{LJ}_{s-} d M^{SJ}_s$ (as for $I^4$ in 1.), that
$$
 \frac{P_1}{n^{1/ \alpha}}| \int_0^1 \int_{\R}P_{s-} U_{s-} z \tilde{\mu}^n(ds,dz) | \leq  \frac{P_1}{n^{2/ \alpha}}(1+ \mu^n(\rho)).
$$

3. We turn to $W_1^{n,r,(2,1)}$. From
 \eqref{E:W21} and \eqref{E:EDSW21}, we have
 \begin{eqnarray*}
 |W_1^{n,r,(2,1)}|  \leq \frac{P_1}{n^{1+2/ \alpha}} \mu^n(\rho) + \frac{P_1}{n^{1/ \alpha}} |\int_0^1 \int_{\R} P_{s-} U_{s-}  \frac{z^2}{n^{1/ \alpha}} \mu^n(ds,dz) | \\
 + \frac{P_1}{n^{1/ \alpha}} |\int_0^1 \int_{\R} P_{s-} U_{s-}  z \tilde{\mu}^n(ds,dz) | \\
 +  \frac{P_1}{n^{2/ \alpha}} \int_0^1 \int_{\R}[ P_{s-} |\partial_r Y_{s-}^{n,r}  | + P_{s-} | Y_{s-}^{n,r} - x| ]\rho(z) \mu^n(ds,dz),
\end{eqnarray*}
where we also used for some terms that $\partial_r Y_{t}^{n,r}=P_t$ (this can be deduced from 1.).
We see easily that
$
\frac{P_1}{n^{1/ \alpha}} |\int_0^1 \int_{\R} P_{s-} U_{s-}  \frac{z^2}{n^{1/ \alpha}} \mu^n(ds,dz) | \leq \frac{P_1}{n^{2/ \alpha}} \mu^n(\rho),
$
but this does not permit  to control  $W_1^{n,r,(2,1)}/U_1^{n,r}$. So we write once again $U_t= \frac{P_t }{n^{2/ \alpha}}Q_t$ with $Q$ defined above.
 Using $\rho(z)=z^2$ if $|z| >1$, we have
$|Q^{LJ}_t |\leq P_t R_1 R_t$. Consequently we obtain
\begin{eqnarray*}
\frac{P_1}{n^{1/ \alpha}} |\int_0^1 \int_{\R} P_{s-} U_{s-}  \frac{z^2}{n^{1/ \alpha}} \mu^n(ds,dz) | & \leq & \frac{P_1}{n^{3/ \alpha}}
[\mu^n(\rho) +R_1  \\
& &  +R_1 \int_0^1 \int_{\R} R_{s-} |z| 1_{\{ |z| > 1\}} \mu^n(ds,dz)].
\end{eqnarray*}
The same inequality holds for $\frac{P_1}{n^{1/ \alpha}}\int_0^1 \int_{\R} P_{s-} U_{s-}  z \tilde{\mu}^n(ds,dz) =\frac{P_1}{n^{3/ \alpha}} \int_0^1 Q_{s-} d M_s $ by decomposing into the small jumps and large jumps of $Q$ and $M$, as already done previously. 
Finally, considering the last term, we have
$$
 \frac{P_1}{n^{2/ \alpha}} \int_0^1 \int_{\R} P_{s-} |\partial_r Y_{s-}^{n,r}  | \rho(z) \mu^n(ds,dz) \leq  \frac{P_1}{n^{2/ \alpha}} \mu^n(\rho) \sup_t |\partial_r Y_{t}^{n,r}  |,
$$
and from \eqref{E:BYx}
\begin{eqnarray*}
 \frac{P_1}{n^{2/ \alpha}} \int_0^1 \int_{\R} P_{s-} | Y_{s-}^{n,r} - x| \rho(z) \mu^n(ds,dz) \leq \frac{P_1}{n^{1+2/ \alpha}} \mu^n(\rho) \hspace{3cm}\\
+ \frac{P_1}{n^{3/ \alpha}}[ \mu^n(\rho) + R_1+ R_1\int_0^1 \int_{\R} R_{s-} |z | 1_{\{ |z| > 1\}} \mu^n(ds,dz)].
\end{eqnarray*}
This completes the proof of 3.

4. Using \eqref{E:W31} and \eqref{E:EDSW31}
\begin{eqnarray*} 
 |W_1^{n,r,(3,1)}|  \leq   \frac{P_1}{n^{1+4/ \alpha}} \mu^n(\rho)^2 + \frac{P_1}{n^{1/ \alpha}} |\int_0^1 \int_{\R} P_{s-} U_{s-}^2  \frac{z^2}{n^{1/ \alpha}} \mu^n(ds,dz) | \\
 + \frac{P_1}{n^{1/ \alpha}} |\int_0^1 \int_{\R} P_{s-} U_{s-}^2  z \tilde{\mu}^n(ds,dz) | \\
 +  \frac{P_1}{n^{2/ \alpha}} \int_0^1 \int_{\R} P_{s-} U_{s-}\rho(z) \mu^n(ds,dz) 
  +  \frac{P_1}{n^{3/ \alpha}} \mu^n(|\rho' \rho|).
\end{eqnarray*}
We have
$$
 \frac{P_1}{n^{2/ \alpha}} \int_0^1 \int_{\R} P_{s-} U_{s-}\rho(z) \mu^n(ds,dz) \leq  \frac{P_1}{n^{4/ \alpha}} \mu^n(\rho)^2,
$$
$$
\frac{P_1}{n^{1/ \alpha}} |\int_0^1 \int_{\R} P_{s-} U_{s-}^2  \frac{z^2}{n^{1/ \alpha}} \mu^n(ds,dz) | \leq \frac{P_1}{n^{4/ \alpha}} \mu^n(\rho)^2.
$$
Turning to the integral with respect to $\tilde{\mu}^n$,  $J=\int_0^1 \int_{\R} P_{s-} U_{s-}^2  z \tilde{\mu}^n(ds,dz)$, we have the representation (recalling that $U_t= \frac{P_t }{n^{2/ \alpha}}Q_t$)
\begin{eqnarray*}
J=\frac{1}{n^{4/ \alpha}}\int_0^1 (Q_{s-})^2 d M_s 
\end{eqnarray*}
and analyzing each term in the decomposition of $J$ between the large and small jumps of $Q$ and $M$, we obtain
$$
\frac{P_1}{n^{1/ \alpha}} |\int_0^1 \int_{\R} P_{s-} U_{s-}^2  z \tilde{\mu}^n(ds,dz) |\leq  \frac{P_1}{n^{4/ \alpha}}(1+ \mu^n(\rho)^2).
$$
The proof of lemma \ref{L:Bounds} is finished.
\end{proof}
Lemma \ref{L:Bounds} combined with \eqref{E:BGUI}  permits to obtain simple bounds for the Malliavin weight $\mathcal{H}_{Y_1^{K,n,r}}( \partial_r Y_1^{K,n,r})$ :
\begin{eqnarray} \label{E:P1}
\left| \frac{ \partial_r Y_1^{n,r} \mathbb{L}_1^{n,r}}{ U_1^{n,r}} \right|\leq P_1 |\partial_r Y_1^{n,r}| +P_1 n^{1/ \alpha}  |\partial_r Y_1^{n,r}| \frac{\mu^n(|\rho' + \rho \frac{F'_n}{F_n}|)}{\mu^n(\rho)},
\end{eqnarray}
\begin{eqnarray} \label{E:P2}
\left| \frac{ W_1^{n,r,(2,1)}}{ U_1^{n,r}} \right| \leq P_1\left( \frac{1}{n} +   \sup_t |\partial_r Y_{t}^{n,r}  |\right)
 \hspace{4cm}\\
+ \frac{P_1}{n^{1/ \alpha}}\left( 1 + \frac{R_1}{\mu^n(\rho)}+ \frac{R_1\int_0^1 \int_{\R} R_{s-} |z | 1_{\{ |z| > 1\}} \mu^n(ds,dz)}{\mu^n(\rho)}\right), \nonumber
\end{eqnarray}
\begin{eqnarray} \label{E:P3}
\left| \frac{ \partial_r Y_1^{n,r}W_1^{n,r,(3,1)}}{ (U_1^{n,r})^2} \right| \leq P_1| \partial_r Y_1^{n,r}| + P_1 n^{1/ \alpha}| \partial_r Y_1^{n,r}| \frac{\mu^n(|\rho' \rho|)}{\mu^n(\rho)^2}.
\end{eqnarray}
It remains to evaluate the $L^2$-norm of these three terms.  For this purpose, we establish an intermediate result.
\begin{lem} \label{L:techni}
We recall that $R_t=\int_0^t \int_{\R} |z| 1_{\{ |z | > 1\}} \mu^n(ds,dz)$. We have  $\forall \varepsilon >0$

(a)
$$
\E_x  \left( P_1\int_0^1 \int_{\R} R_{s-}  |z| 1_{ \{ |z| >1\}} \mu^n(ds,dz) \right)^{2} \leq 
C_{\varepsilon}(a,b, \alpha) (1 + |x|^2) \frac{n^{4/ \alpha}}{n^{2- \varepsilon}} , 
$$
(b) 
$$
\E_x \left(P_1 \frac{ R_1 \int_0^1 \int_{\R} R_{s-}  |z| 1_{ \{ |z| >1\}} \mu^n(ds,dz) }{ \mu^n(\rho) } \right)^{2} \leq 
\begin{cases}
C(a,b, \alpha) (1 + |x|^2), \; \mbox{if} \; \alpha >1,\\
 C_{\varepsilon}(a,b, \alpha) (1 + |x|^2) \frac{n^{2/ \alpha}}{n^{2- \varepsilon}} \; \mbox{if} \; \alpha \leq 1.
\end{cases}
$$
\end{lem}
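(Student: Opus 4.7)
Plan outline: The essential technical input is the change of variables $u = z/n^{1/\alpha}$ in the rescaled L\'evy density $F_n(z) = g(z/n^{1/\alpha})\tau_K(z/n^{1/\alpha})/|z|^{\alpha+1}$. It yields the uniform moment estimates $\tilde\kappa_p := \int_{\{|z|>1\}}|z|^p F_n(z)\,dz \leq C_{p,\alpha}\,n^{(p-\alpha)/\alpha}$ for $p > \alpha$ (with a logarithmic correction at $p = \alpha$) and $\tilde\kappa_p \leq C$ for $p < \alpha$. Under \textbf{A1} the support of $F_n$ is contained in $\{|z| \leq n^{1/\alpha}/(2||a'||_\infty)\}$, which guarantees that all the relevant integrals are finite.

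For part (a), I would first apply It\^o's formula to the compound Poisson process $R$ to obtain the decomposition
$$\int_0^1 R_{s-}\,|z|\,1_{\{|z|>1\}}\,\mu^n(ds,dz) = M_1 + \tilde\kappa_1\int_0^1 R_s\,ds,$$
where $M$ is a square-integrable martingale driven by $\tilde\mu^n$ and the second term is its compensator. The It\^o isometry gives $\E M_1^2 = \tilde\kappa_2\int_0^1 \E R_s^2\,ds$, and the closed form $\E R_s^2 = s\tilde\kappa_2 + s^2\tilde\kappa_1^2$ (compound Poisson) combined with $\int_0^1 R_s\,ds \leq R_1$ leads to $\E J_1^2 \leq C(\tilde\kappa_2 + \tilde\kappa_1^2)^2 \leq C\,n^{(4-2\alpha)/\alpha}$. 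To bring in the multiplicative factor $P_1$, I would apply H\"older's inequality with exponents $(p,q)$, where $q = 1+\delta$ is close to $1$ and $p$ large, using that $P_1$ has all $L^p$ moments controlled by $C(1+|x|^p)$ uniformly in $n$; the $L^{2+\delta'}$ norms of $J_1$ needed to invoke H\"older are controlled by the Burkholder--Davis--Gundy inequality together with higher $\tilde\kappa_p$ estimates, the small loss being absorbed into the $n^\varepsilon$ factor.

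For part (b), the crucial pointwise observation is that on $\{|z|>1\}$ we have $|z| \leq z^2$, so the jumps $Z_i$ of $R$ satisfy
$$R_1 = \sum_i |Z_i| \leq \sum_i Z_i^2 = [R,R]_1 \leq \mu^n(\rho),$$
using $\rho(z) = z^2$ for $|z|\geq 1$. Combined with the Cauchy--Schwarz refinement $R_1^2 \leq N_1 [R,R]_1$, where $N_1 := \mu^n([0,1]\times\{|z|>1\})$ is Poisson with parameter bounded uniformly in $n$ and hence has all moments controlled, this yields simultaneously $R_1/\mu^n(\rho) \leq 1$ and $R_1^2/\mu^n(\rho) \leq N_1$. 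For $\alpha \leq 1$, bounding $J_1 \leq R_1^2/2$ and using the second inequality gives $R_1 J_1/\mu^n(\rho) \leq R_1 N_1/2$; the $L^2$ estimate $\E R_1^2 \leq C\tilde\kappa_2 \leq C\,n^{(2-\alpha)/\alpha}$ together with the same H\"older trick for $P_1$ then produces the rate $n^{2/\alpha}/n^{2-\varepsilon}$. For $\alpha > 1$, I would additionally exploit the uniform bound $\tilde\kappa_1 \leq C$: the compensator contribution $V = \tilde\kappa_1\int_0^1 R_s\,ds$ to $J_1$ then satisfies $V/\mu^n(\rho) \leq \tilde\kappa_1 R_1/\mu^n(\rho) \leq C$, while the martingale part divided by $\mu^n(\rho)$ is handled via $\mu^n(\rho) \geq [R,R]_1$ together with the It\^o isometry, leading to the $n$-independent bound $C(1+|x|^2)$.

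The main obstacle is the sharp accounting of the gain $n^{2/\alpha}$ arising from the factor $1/\mu^n(\rho)$: this gain is genuinely delicate because it rests on the special identity $R_1 \leq \mu^n(\rho)$ (valid precisely because the integration restricts to $|z|>1$, where $|z| \leq z^2$) combined with the Cauchy--Schwarz refinement $R_1^2/\mu^n(\rho) \leq N_1$, and it would not hold for a small-jump analogue. A secondary difficulty is the absorption of the logarithmic correction near $\alpha = 1$ (from $\tilde\kappa_1$) into the extra $n^\varepsilon$ via H\"older's inequality, which forces the appearance of $C_\varepsilon$ rather than an absolute constant in the sub-critical range.
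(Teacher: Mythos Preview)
Your approach for (a) is different from the paper's and in the $L^2$ case it works: your bound $\E J_1^2\le C(\tilde\kappa_2+\tilde\kappa_1^2)^2\le Cn^{4/\alpha-2}$ is correct. But the extension to $\E J_1^{2q}$ via BDG as you describe it is not sharp. Writing $[M,M]_1=\sum_j R_{T_j-}^2|Z_j|^2$ and bounding $R_{T_j-}\le R_1$ gives $[M,M]_1\le R_1^2[R,R]_1$, whence $\E[M,M]_1^q\le Cn^{4q/\alpha}/n$ only, and after H\"older this produces $n^{4/\alpha}/n^{1-\varepsilon}$ rather than $n^{4/\alpha}/n^{2-\varepsilon}$. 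The paper bypasses the martingale decomposition entirely and works with the compound Poisson representation: writing $J_1\le\sum_{i\neq j}|Z_i||Z_j|$ with $Z_i$ i.i.d.\ given $N_1$, one gets $\E J_1^{2p}\le \E N_1^{4p}(\E|Z_1|^{2p})^2$, and it is the \emph{square} $(\E|Z_1|^{2p})^2\sim(n^{2p/\alpha}/n)^2$ that produces the $1/n^2$. Your BDG route can be repaired, but only by reinserting exactly this conditioning-on-$N_1$ factorisation.

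There is a genuine gap in (b). Your pointwise bound $R_1J_1/\mu^n(\rho)\le R_1N_1/2$ is correct, but it collapses the off-diagonal structure of $J_1$ and loses a full power of $n$. For $\alpha\le 1$, after H\"older you obtain at best $(\E R_1^{2q})^{1/q}\sim n^{2/\alpha}/n^{1/q}$, hence $n^{2/\alpha}/n^{1-\varepsilon}$, not the claimed $n^{2/\alpha}/n^{2-\varepsilon}$. For $\alpha>1$, your compensator piece $R_1V/\mu^n(\rho)\le\tilde\kappa_1 N_1$ is indeed bounded, but for the martingale piece the reduction $R_1/\mu^n(\rho)\le 1$ leaves you with $|M_1|$, and $\E M_1^2\sim\tilde\kappa_2^2\sim n^{2(2-\alpha)/\alpha}$ is unbounded in $n$. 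The paper's key manoeuvre is a double use of Cauchy--Schwarz / AM--GM that retains the U-statistic after dividing: from $R_1^2\le N_1\sum_i|Z_i|^2$ and $\sum_{i\neq j}|Z_i||Z_j|\le N_1\sum_i|Z_i|^2$ one gets
\[
\Bigl(\frac{R_1J_1}{\mu^n(\rho)}\Bigr)^2\le N_1\,\frac{J_1^2}{\sum_i|Z_i|^2}\le N_1^2\sum_{i\neq j}|Z_i||Z_j|.
\]
Now $\E\bigl(\sum_{i\neq j}|Z_i||Z_j|\bigr)^p\le \E N_1^{2p}(\E|Z_1|^p)^2$ factorises. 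For $\alpha>1$ choose $1<p<\alpha$ so that $\E|Z_1|^p\le C$ is uniform in $n$; for $\alpha\le 1$ the two factors $(\E|Z_1|^p)^2\sim(n^{p/\alpha}/n)^2$ give the required $1/n^2$. Your bound $R_1N_1$ instead forces the diagonal moment $\E|Z_1|^{2p}\sim n^{2p/\alpha}/n$, which carries only a single $1/n$.
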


\begin{proof}

We first recall that $\int_0^t \int_{\R} f(z) 1_{ \{ |z| >1\}}\mu^{n}(ds,dz) =\sum_{i=1}^{N_t} f(Z_i)$, where $(N_t)$ is a Poisson process with intensity $\lambda_n= \int_{\R} F_{n}(z) 1_{ \{ |z| >1\}}dz$ such that $\lambda_n \leq C(\alpha)$ and $(Z_i)_{i \geq 1} $ are i.i.d. variables with density $\frac{F_n(z) 1_{\{ |z|>1\} }}{\lambda_n}dz$.  

(a) We have 
$$
| P_1\int_0^1 \int_{\R} R_{s-}  |z| 1_{ \{ |z| >1\}} \mu^n(ds,dz) | \leq P_1 \sum_{i=1}^{N_1} |Z_i| \sum_{j=1}^{i-1} |Z_j| \leq P_1 \sum_{i \neq j} |Z_i | |Z_j|.
$$
So, we obtain from H\"older's inequality for any $p > 1$
$$
\E_x ( P_1\int_0^1 \int_{\R}R_{s-}  |z| 1_{ \{ |z| >1\}} \mu^n(ds,dz) )^{2} \leq C(a,b, \alpha) (1 + |x|^2)[ \E (\sum_{i \neq j} |Z_i | |Z_j|)^{2p}]^{\frac{1}{p}}.
$$
But we easily check that
$$
\E (\sum_{i \neq j} |Z_i | |Z_j|)^{2p} \leq \E(N_1^{4p}) [\E |Z_i |^{2p} ]^2,
$$
and that (the constant depends on $a$ through the truncation)
$$
\E | Z_i |^{2p} \leq C(a, \alpha) \frac{n^{2p/ \alpha}}{n}.
$$
This leads to
$$
[ \E (\sum_{i \neq j} |Z_i | |Z_j|)^{2p}]^{\frac{1}{p}} \leq C(a, \alpha)\frac{n^{4/ \alpha}}{n^{2/p}},
$$
and (a) is proved by choosing $p$ arbitrarily close to $1$ (recalling that $C(a, \alpha)$ depends on $p$).

(b) Observing that $\mu^n(\rho) \geq \mu_n(\rho 1_{ \{ |z| >1\}})$ and proceeding as in (a)
\begin{eqnarray*}
 |P_1 \frac{ R_1\int_0^1 \int_{\R} R_{s-}  |z| 1_{ \{ |z| >1\}} \mu^n(ds,dz) }{ \mu^n(\rho) } | \leq  
 P_1 \frac{\sum_{i=1}^{N_1} |Z_i|
 \sum_{i=1}^{N_1} |Z_i| \sum_{j=1}^{i-1} |Z_j| }{\sum_{i=1}^{N_1} |Z_i|^2 }.
\end{eqnarray*}
But using successively Cauchy Schwarz inequality and $| Z_i | |Z_j | \leq \frac{1}{2} ( |Z_i |^2+ | Z_j |^2)$
\begin{eqnarray*}
\left(\frac{\sum_{i=1}^{N_1} |Z_i|
 \sum_{i=1}^{N_1} |Z_i| \sum_{j=1}^{i-1} |Z_j| }{\sum_{i=1}^{N_1} |Z_i|^2 }\right)^2 & \leq & N_t \frac{(\sum_{i \neq j } | Z_i | |Z_j |)^2} {\sum_{i=1}^{N_1} |Z_i|^2} \\
  & \leq & N_t^2 \sum_{i \neq j } | Z_i | |Z_j |.
\end{eqnarray*}
Now  for any $p>1$ we have
$$
\E(\sum_{i \neq j } | Z_i | |Z_j |)^p \leq \E(N_1^{2p}) [\E( |Z_i|^p)]^2 \leq C(a, \alpha) (1+\frac{n^{p/ \alpha}}{n})^2
$$
If $\alpha>1$, choosing  $1<p< \alpha$ gives $\E(\sum_{i \neq j } | Z_i | |Z_j |)^p  \leq C(a, \alpha)$ and we obtain the first part of (b) from H\"{o}lder's inequality.

If $\alpha \leq 1$ then $\E(\sum_{i \neq j } | Z_i | |Z_j |)^p  \leq C(a, \alpha) \frac{n^{2p/ \alpha}}{n^2}$ and finally H\"{o}lder's inequality gives $\forall p>1$
$$
\E_x \left(P_1 \frac{ \int_0^1 \int_{\R} R_{s-}  |z| 1_{ \{ |z| >1\}} \mu^n(ds,dz) R_1 }{ \mu^n(\rho) } \right)^{2}
\leq  C(a,b, \alpha) (1 + |x|^2) \frac{n^{2/ \alpha}}{n^{2/p}},
$$
and we conclude by choosing $p$ arbitrarily close to $1$.
\end{proof}
From Lemma \ref{L:techni} (a) and Lemma \ref{L:Bounds}, we obtain immediately
\begin{eqnarray} \label{E:BFDY}
\E_x \sup_t |\partial_r Y_{t}^{n,r}  |^2 \leq C_{\varepsilon}(a,b,\alpha)(1+ |x|^2)( \frac{1}{n^2} +\frac{1}{n^{2/ \alpha}} +\frac{1}{n^{2- \varepsilon}}),
\end{eqnarray}
Consequently combining \eqref{E:BFDY}, \eqref{E:P2}, Lemma \ref{L:techni} (b) and observing that $R_1 / \mu^n(\rho) \leq 1$, we have
$$
\E_x\left| \frac{ W_1^{n,r,(2,1)}}{ U_1^{n,r}} \right|^2 \leq C_{\varepsilon}(a,b,\alpha)(1+ |x|^2)( \frac{1}{n^2} +\frac{1}{n^{2/ \alpha}} +\frac{1}{n^{2- \varepsilon}}).
$$
To control the $L^2$-norm of $\frac{ \partial_r Y_1^{n,r}W_1^{n,r,(3,1)}}{ (U_1^{n,r})^2}$, in view of \eqref{E:P3} and \eqref{E:BFDY} it remains to bound 
$$
n^{1/ \alpha} |\partial_r Y_1^{n,r} |\frac{\mu^n(|\rho' \rho|)}{\mu^n(\rho)^2} .
$$
We check that
$
\frac{\mu^n(|\rho' \rho|)}{\mu^n(\rho)^2} (1+ R_1) \leq P_1, 
$
and using $\mu^n(|\rho' \rho|)=\mu^n(|\rho' \rho|1_{\{ |z| \leq 1\}} ) + \mu^n(|\rho' \rho|1_{\{ |z| > 1\}} )$ with $\mu^n(|\rho' \rho|1_{\{ |z| > 1\}} ) \leq 
2 R_1 \mu^n(\rho)$, it yields
$$
 \frac{\mu^n(|\rho' \rho|)}{\mu^n(\rho)^2} \int_0^1 \int_{\R} R_{s-} |z| 1_{\{ |z| > 1\}} \mu^n(ds,dz) \leq P_1+
P_1 \frac{R_1\int_0^1 \int_{\R} R_{s-} |z| 1_{\{ |z| > 1\}} \mu^n(ds,dz)}{\mu^n(\rho)}.
$$
So from Lemma \ref{L:Bounds}  we have
\begin{eqnarray*}
n^{1/ \alpha} |\partial_r Y_1^{n,r} |\frac{\mu^n(|\rho' \rho|)}{\mu^n(\rho)^2}  & \leq  & P_1(\frac{1}{n} 
 + \frac{1}{n^{1/ \alpha}}) +  \frac{P_1}{n^{1/ \alpha}}\frac{R_1\int_0^1 \int_{\R} R_{s-} |z| 1_{\{ |z| > 1\}} \mu^n(ds,dz)}{\mu^n(\rho)}, 
\end{eqnarray*}
and consequently from \eqref{E:P3},  \eqref{E:BFDY} and Lemma \ref{L:techni} we conclude
\begin{eqnarray*}
\E_x\left| \frac{ \partial_r Y_1^{n,r}W_1^{n,r,(3,1)}}{ (U_1^{n,r})^2} \right|^2  \leq C_{\varepsilon}(a,b,\alpha)(1+ |x|^2)( \frac{1}{n^2} +\frac{1}{n^{2/ \alpha}} +\frac{1}{n^{2- \varepsilon}}).
\end{eqnarray*}
For the last term $ \frac{ \partial_r Y_1^{n,r} \mathbb{L}_1^{n,r}}{ U_1^{n,r}}$, in view of \eqref{E:P1} and \eqref{E:BFDY} it remains to study 
$$
P_1 n^{1/ \alpha}  |\partial_r Y_1^{n,r}| \frac{\mu^n(|\rho' + \rho \frac{F'_n}{F_n}|)}{\mu^n(\rho)}
$$
where $ \frac{F'_n}{F_n}(z)=\frac{1}{z} + \frac{1}{n^{1/ \alpha}} \frac{g'}{g}(\frac{z}{n^{1/ \alpha}})$. For any $p \geq 1$, we have using {\bf A1}
\begin{eqnarray*}
\E \int_0^1 \int_{\R} | \frac{g'}{g}(\frac{z}{n^{1/ \alpha}}) |^p 1_{\{ |z| > 1\}}\mu^n( ds,dz)  & = &
2 \int_{1}^{\frac{n^{1/ \alpha}}{2 || a' ||_{\infty}}} | \frac{g'}{g}(\frac{z}{n^{1/ \alpha}})|^p g(\frac{z}{n^{1/ \alpha}}) \frac{1}{z^{\alpha+1}} dz \\
& = & \frac{2}{n} \int_{1/n^{1/ \alpha}}^{\frac{1}{2 || a' ||_{\infty}}} | \frac{g'}{g}(u)|^p g(u) \frac{1}{u^{\alpha+1}} du \\
& \leq & \frac{C}{n} [\int_{1/n^{1/ \alpha}}^1  \frac{1}{u^{\alpha+1}} du +\int | \frac{g'}{g}(u)|^pg(u) du] \\
& \leq & C(\alpha).
\end{eqnarray*}
So it yields, introducing  $1_{\{ |z| \leq 1\}}$ and $1_{\{ |z| > 1\}}$
$$
\mu^n(|\rho' + \rho \frac{F'_n}{F_n}|) \leq P_1(1+ R_1).
$$
Next, Lemma \ref{L:Bounds} and the previous bound give 
$$
P_1 n^{1/ \alpha}  |\partial_r Y_1^{n,r}| \frac{\mu^n(|\rho' + \rho \frac{F'_n}{F_n}|)}{\mu^n(\rho)} \leq 
P_1(\frac{1}{n} 
 + \frac{1}{n^{1/ \alpha}}) +  \frac{P_1}{n^{1/ \alpha}}\frac{R_1\int_0^1 \int_{\R} R_{s-} |z| 1_{\{ |z| > 1\}} \mu^n(ds,dz)}{\mu^n(\rho)},
$$
and we conclude with \eqref{E:P1}, \eqref{E:BFDY} and Lemma \ref{L:techni}
\begin{eqnarray*} 
\E_x \left| \frac{ \partial_r Y_1^{n,r} \mathbb{L}_1^{n,r}}{ U_1^{n,r}} \right|^2 \leq C_{\varepsilon}(a,b,\alpha)(1+ |x|^2)( \frac{1}{n^2} +\frac{1}{n^{2/ \alpha}} +\frac{1}{n^{2- \varepsilon}}).
\end{eqnarray*}
Collecting all these results, we finally have proved, $\forall \varepsilon >0$
$$
\E_x | \mathcal{H}_{Y_1^{K,n,r}}( \partial_r Y_1^{K,n,r})|^2 \leq C_{\varepsilon}(a,b,\alpha)(1+ |x|^2)( \frac{1}{n^2} +\frac{1}{n^{2/ \alpha}} +\frac{1}{n^{2- \varepsilon}}).
$$ 
We can easily see that the constant $C_{\varepsilon}(a,b, \alpha)$ has exponential growth in $||b'||_{\infty}$ and polynomial growth in $||b''||_{\infty}$, $||a'||_{\infty}$, $||a''||_{\infty}$, $1/||a'||_{\infty}$, $b(0)$, $a(0)$, $1/\underline{a}$, $1/ \alpha$ and $1/(\alpha-2)$.

To complete the proof of Theorem \ref{T:BHellG}, we consider  the Euler approximation. The proof follows the same lines but the bound for $\partial_r \tilde{Y}_t^{n,r}$ has the additional term $b(x)/n^2$. So the first item in Lemma \ref{L:Bounds} is replaced by
\begin{eqnarray*} \label{E:BDY}
\sup_{t \in [0,1] }  |\partial_r \tilde{Y}_t^{n,r}|   & \leq  & \frac{P_1}{n^2} +\frac{P_1}{n^{1+1/ \alpha}}\left( 1 +R_1\right) \\
& & + \frac{P_1}{n^{2/ \alpha}}\left(1+R_1 +\int_0^1 \int_{\R} R_{s-} |z| 1_{\{ |z| > 1\}} \mu^n(ds,dz)  \right). \nonumber
\end{eqnarray*}
Since we have to control not only $\sup_{t  }  |\partial_r \tilde{Y}_t^{n,r}|$ but also $n^{1/ \alpha}\sup_{t  }  |\partial_r \tilde{Y}_t^{n,r}|$, we have the extra term $n^{1/ \alpha}/n^2$ and finally
$$
\E_x | \mathcal{H}_{\tilde{Y}_1^{K,n,r}}( \partial_r \tilde{Y}_1^{K,n,r})|^2 \leq C_{\varepsilon}(a,b,\alpha)(1+ |x|^2)(\frac{n^{2/ \alpha}}{n^4}+ \frac{1}{n^2} +\frac{1}{n^{2/ \alpha}} +\frac{1}{n^{2- \varepsilon}}).
$$

\bibliographystyle{plain}

\end{document}